\documentclass[11pt,a4paper]{amsart}
%
%
\usepackage[utf8]{inputenc}
\usepackage[english]{babel}
\usepackage{amsmath,amsfonts,amssymb,amsthm}
\usepackage{graphicx,color}
\usepackage{tikz}
\usepackage{subfigure}
\usepackage{enumerate}
\usepackage{upref} 
\usepackage{hyperref}
\usepackage[all]{hypcap} 

%
%
\hypersetup{%
  colorlinks=false,%
  citecolor=black,%
  filecolor=black,%
  linkcolor=black,%
  urlcolor=black,%
  pdfauthor={Luca Gugelmann, Reto Spoehel},%
  pdftitle={}%
}

%
%
\newtheorem{theorem}{Theorem}
\newtheorem{lemma}[theorem]{Lemma}

\newtheorem{remark}[theorem]{Remark}

\newtheorem{definition}[theorem]{Definition}

\newtheorem*{theorem*}{Theorem}
\newtheorem*{lemma*}{Lemma}
\newtheorem*{claim*}{Claim}
\newtheorem*{definition*}{Definition}

%
%
\renewcommand{\P}{\mathbb{P}}
\newcommand{\E}{\mathbb{E}}
\newcommand{\Var}{\mathrm{Var}}

\providecommand{\abs}[1]{\lvert#1\rvert}

\newcommand{\argmin}{\operatornamewithlimits{arg\,min}}

%
%
\newcommand{\lr}{\ensuremath{\lambda_{r,\theta}}}
\newcommand{\lrs}{\ensuremath{\lambda_{r,\theta^*}}}
\newcommand{\Lr}{\ensuremath{\Lambda_{r,\theta}}}
\newcommand{\mr}{\ensuremath{\mu_{r,\theta}}}
\newcommand{\mrs}{\ensuremath{\mu_{r,\theta^*}}}
\newcommand{\drs}{\ensuremath{d^{r*}}}
\newcommand{\pov}{\ensuremath{p_0(r,F,n)}}
\newcommand{\Fpi}{\ensuremath{F_r^\pi}}
\newcommand{\clk}{\ensuremath{C_{\ell,k}}}
\newcommand{\clkp}{\ensuremath{C_{\ell,k}^{r+}}}
\newcommand{\clks}{\ensuremath{C_{\ell,k}^{r*}}}

\newcommand{\Gnp}{\ensuremath{G(n,p)}}
\newcommand{\Grnp}{\ensuremath{G^r(n,p)}}
\newcommand{\cG}{\mathcal{G}}
\newcommand{\cS}{\mathcal{S}}
\newcommand{\RR}{\mathbb{R}}
\newcommand{\Jhat}{\hat J}
\newcommand{\cvH}{\vec{\mathcal{H}}}
\newcommand{\vH}{\vec{H}}
\newcommand{\ind}[1]{\mathbf{1}_{\{#1\}}}

%
%
\renewcommand{\phi}{\varphi}
\renewcommand{\epsilon}{\varepsilon}



\begin{document}

\title{On Balanced Coloring Games in Random Graphs}
\author{Luca Gugelmann}
\email{lgugelmann@inf.ethz.ch}
\address{Institute of Theoretical Computer Science\\ETH Zürich\\8092
  Zürich\\Switzerland}
\author{Reto Spöhel}
\email{reto.spoehel@bfh.ch}
\address{Technik und Informatik\\Berner Fachhochschule\\3400 Burgdorf, Switzerland}

\begin{abstract}
  Consider the balanced Ramsey game, in which a player has \(r\) colors and
  where in each step \(r\) random edges of an initially empty graph on \(n\)
  vertices are presented. The player has to immediately assign a different
  color to each edge and her goal is to avoid creating a monochromatic copy of
  some fixed graph \(F\) for as long as possible. The Achlioptas game is
  similar, but the player only loses when she creates a copy of \(F\) in one
  distinguished color. We show that there is an infinite family of non-forests
  \(F\) for which the balanced Ramsey game has a different threshold than the
  Achlioptas game, settling an open question by Krivelevich et al. We also
  consider the natural vertex analogues of both games and show that their
  thresholds coincide for all graphs \(F\), in contrast to our results for the
  edge case.
\end{abstract}

\maketitle

\section{Introduction}

\subsection{The balanced Ramsey game}
Consider the following probabilistic one-player game. Starting with the empty
graph on \(n\) vertices, in each step \(r\) new edges are sampled uniformly
at random from all non-edges and inserted into the graph. The player -- we call
her Painter -- has $r$
colors at her disposal and must color these $r$ edges immediately subject to
the restriction that each color is assigned to exactly one of the $r$ edges.
Her goal is to avoid creating a monochromatic copy of some fixed graph \(F\)
for as long as possible. We refer to this game as the \emph{balanced Ramsey
  game}; it was introduced by Marciniszyn et al.\ in~\cite{Marciniszyn2007a}.

The typical duration of this game when played with an optimal strategy is
formalized by the notion of its \emph{threshold function}
\(N_0(F,r,n)\). Specifically, we say that $N_0(F,r,n)$ is a threshold function
for the game (for a fixed graph $F$ and a fixed integer $r\geq 2$) if for any
function \(N(n)\ll N_0\)\footnote{We write $f\ll g$ for $f=o(g)$ and $f\gg g$ for $f=\omega(g)$. All our asymptotics are for $n\to\infty$.}, Painter can a.a.s.\footnote{asymptotically almost
  surely, i.e.\ with probability tending to 1 as $n$ tends to infinity}
`survive' for at least $N$ steps using an appropriate strategy, and if for
any \(N(n) \gg N_0 \), Painter a.a.s.\ cannot survive for more than $N$
steps regardless of her strategy.  Note that this defines the threshold
function only up to constant factors; therefore, whenever we compare two
threshold functions and e.g.\ say that one is strictly higher than the other
this refers to their orders of magnitude.

Standard arguments show that such a threshold function always exists for games
of this type (see \cite[Lemma~2.1]{Marciniszyn2009a}). Therefore the goal when studying these games usually is to
determine their threshold function \emph{explicitly}.
In~\cite{Marciniszyn2007a}, Marciniszyn et al.\ determined the threshold function of the
balanced Ramsey game for the case when \(F\) is a cycle of arbitrary fixed length, and \(r = 2\) colors are available. For example, the threshold of the balanced Ramsey game when $F=C_3$ is a triangle and $r=2$ was shown to be
$N_0(C_3,2,n)=n^{6/5}$.
More recently, Prakash et al.\
\cite{Prakash2009} extended these results to an arbitrary number of colors \(r
\geq 2\). In particular, their work yields the first threshold results for the case where $F=K_\ell$ is a complete graph of size at least $4$ (and $r$ is large enough; specifically, their result requires  \(r \geq \ell\)).

\subsection{The Achlioptas game}
\label{sec:achlioptas}
A similar game to the one above was first studied by Krivelevich et al.\ in
\cite{krivelevich2009avoiding}. For the purposes of this paper we shall refer
to it simply as the \emph{Achlioptas game}. In this game we start with an
empty graph on \(n\) vertices. In each step, \(r\) edges chosen uniformly at random from all edges never seen
before are revealed. The player has to choose exactly one of these edges for inclusion
in the graph; the remaining \(r-1\) edges are discarded. The player's
goal is to avoid creating a copy of some fixed graph \(F\) for as long as
possible. Note that this can be seen as a balanced Ramsey game with relaxed
rules such that the player only needs to worry about copies of \(F\) in the
first color and can ignore the other \(r-1\) colors. As an immediate
consequence, for any $F$ and $r$ the threshold of the Achlioptas game is an
upper bound on the threshold of the balanced Ramsey game.

Mütze et al.~\cite{Mutze2011} recently determined the general threshold function of the Achlioptas game, valid for any fixed graph \(F\) and any fixed integer \(r \geq 2\). The general threshold formula turns out to be considerably more complicated than the preliminary results of~\cite{krivelevich2009avoiding} suggest.

It follows from known results that if $F$ is e.g.\ a star or a
path, the balanced Ramsey and the Achlioptas game have different thresholds (see Section~\ref{sec:edge-case} for an
example). However, for all non-forests $F$ where both thresholds are known
(i.e.\ for all cases covered by Prakash et al.\ \cite{Prakash2009}), the two
thresholds coincide, and so far it was unknown whether in fact the
two thresholds coincide for any non-forest $F$ and any $r\geq 2$. This
question was raised explicitly in Krivelevich et al.\ \cite{Krivelevich2010}.
We answer this question negatively in this work.

\begin{theorem} \label{thm:1}
  There is an infinite family of non-forests \(F\) for which, for any fixed
  integer \(r\geq 2\), the balanced Ramsey game has a strictly lower threshold
  than the Achlioptas game.
\end{theorem}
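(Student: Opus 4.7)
The plan is to exhibit an infinite family of non-forest graphs $\{F_k\}_{k \geq k_0}$ for which, for each fixed $r \geq 2$, the balanced Ramsey threshold is strictly smaller than the Achlioptas threshold. A natural construction is to attach a long ``tree-like'' extension (a path, a star of $k$ pendant edges, or similar) to a small dense ``core'' that guarantees $F_k$ is a non-forest (e.g.\ a triangle or a short cycle). The intuition is that the sparse extension inflates the Achlioptas threshold --- since in the Achlioptas game Painter can essentially discard edges that would extend the dangerous part of her graph --- while in the balanced game Painter cannot escape the core, because every arriving edge must be coloured. This mirrors the mechanism that already separates the two thresholds for forests such as stars and paths.

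For the Achlioptas threshold of $F_k$, I would apply the general threshold formula of M\"utze et al.~\cite{Mutze2011} and obtain an exponent $\alpha_k^A$ such that $N_0(F_k,r,n) = \Theta(n^{\alpha_k^A})$ in the Achlioptas game. The formula expresses $\alpha_k^A$ as the outcome of an optimisation over subgraphs of $F_k$; for the proposed family, one verifies that this optimum is attained on a subgraph containing the sparse extension, so that $\alpha_k^A$ grows with $k$.

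The heart of the proof is the matching strict upper bound on the balanced Ramsey threshold. The standard approach is to identify a ``forcing gadget'' $H = H(F_k, r)$: a partially-coloured configuration in the host graph such that whenever the $r$ edges sampled in a single round complete $H$ in a suitable position, every balanced colouring forces a monochromatic copy of $F_k$. A second-moment computation should then show that at $N = n^{\alpha_k^A - \delta}$ steps, for some fixed $\delta > 0$, copies of $H$ in ``active'' positions a.a.s.\ arise in such profusion that Painter cannot escape them, no matter how she coloured the earlier-arriving edges that make up the skeleton of $H$.

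The main obstacle is to design $H$ to satisfy two competing requirements: it must be rich enough in overlapping copies of $F_k$ for the pigeonhole arising from the balance constraint to forbid every admissible colouring, yet sparse enough to a.a.s.\ appear strictly before the Achlioptas threshold. In addition, the forcing argument must be robust against Painter's adaptive strategy; one typically has to track partial colourings of the skeleton of $H$ as it assembles in the process, as in the lower-bound proofs of~\cite{Marciniszyn2007a,Prakash2009}, and argue via a union bound or concentration that Painter's advance preparation cannot save her in sufficiently many positions.
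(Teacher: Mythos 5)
Your outline identifies the right high-level ingredients --- a pigeonhole forced by the balance constraint, a forcing gadget, a second-moment argument, and robustness against Painter's adaptive strategy --- but the two decisions that actually constitute a proof are left open, and the proposed family is unlikely to support them. The paper's family is $C_{\ell,k}$ with $\ell,k\geq 3$: $k$ cycles of length $\ell$ joined at a single vertex. This is qualitatively different from a triangle (or short cycle) with a tree attached. The pigeonhole requires a vertex at which one can place many \emph{pre-charged} potential monochromatic substructures, each of which becomes monochromatic no matter which color its one remaining edge receives. Cycles supply this: a ``petal'' --- two endpoints of a non-edge joined by a path of length $\ell-1$ in \emph{each} of the $r$ colors --- turns into a monochromatic $\ell$-cycle for whatever color Painter assigns to the missing edge. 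Once $r(k-1)+1$ such petals share the center vertex and all the missing (center-star) edges have arrived, some color receives at least $k$ of them, and a monochromatic $C_{\ell,k}$ is unavoidable. By contrast, a pendant edge cannot be pre-charged in $r$ colors, and a triangle core attached to a tree would itself have to end up monochromatic in whichever color the pigeonhole singles out; your proposal does not arrange this, and it is unclear that any gadget for such a family exists.

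Beyond the choice of family, two further non-trivial steps are missing. First, the separation has to be verified: the appearance threshold of the gadget $C_{\ell,k}^{r*}$ (petals plus center star), governed by $e(C_{\ell,k}^{r*})/v(C_{\ell,k}^{r*})$ since this graph is balanced, must be shown strictly below the Achlioptas threshold $n^{2-1/m^{r*}(C_{\ell,k})}$. The paper does this by an explicit lower bound on $m^{r*}(C_{\ell,k})$ obtained from a concrete choice of subgraph sequence in the max-min formula, together with an explicit computation of the gadget's density; the hypothesis $k\geq 3$ enters exactly in this comparison and is not automatic. Second, ``robustness against Painter's adaptive strategy'' is not obtained by a union bound over strategies or by tracking partial colorings as you suggest, but by an offline counting statement (Theorem~\ref{thm:m2-bound}): above the density threshold $n^{2-1/m_2(C_{\ell,k}^{r+})}$, a.a.s.\ \emph{every} balanced coloring of $G^r(n,m)$ contains $\Omega\bigl(n^{v}(m/n^2)^{e}\bigr)$ properly colored copies of the petal configuration. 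That guarantee, deterministic in Painter's first-half coloring, is what supplies the second-moment argument in the second half of the game with its raw material.
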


The simplest non-forest graph \(F\) for which we show that the two online thresholds
differ consists of three triangles joined at a common vertex, cfr.\
Figure~\ref{fig:ex:f} on page~\pageref{fig:ex:f}.

Theorem~\ref{thm:1} is in contrast with known results on the \emph{offline} problems corresponding to the two online games discussed here: As shown in~\cite{Krivelevich2010}, the two offline
problems have the same threshold for `almost all' non-forests $F$, in particular for `most' graphs of the infinite family
from Theorem~\ref{thm:1}.\footnote{The
  result is proven for all non-forests \(F\) that have a strictly 2-balanced
  subgraph \(H \neq K_3\).}

\subsection{Vertex analogues}
\label{sec:vertex-analogues}

Both the balanced Ramsey and the Achlioptas game have a natural vertex
analogue, where the player is presented with \(r\) new \emph{vertices}
(instead of edges) in each step. At the start of these games, a random graph $G(n,p)$ on vertex set $\{v_1,\ldots, v_n\}$ is generated, hidden from the player's view, by including each of the $\binom{n}{2}$ possible edges with some fixed probability $p=p(n)$ independently. We assume that $r$ divides $n$. In each step of the game, the $r$ next consecutive vertices are revealed, along with all edges induced by the vertices revealed so far. Thus after $i$ steps, the player sees exactly the random edges induced by $v_1, \ldots, v_{ir}$.

In the balanced Ramsey game the player has to
assign each of $r$ available colors to exactly one of the \(r\) new vertices at each step, without completing a (vertex-)monochromatic copy of some fixed graph $F$. In the Achlioptas game, she has to select one of the $r$ new vertices, and the $r-1$ remaining vertices are discarded along with all incident edges. Again the player's goal is to avoid creating a copy of some fixed graph $F$. 

In both cases we are interested in explicit threshold functions \(p_0 = p_0(F,r,n)\) such
that (i) for any function \(p(n) \ll p_0\) there is a strategy which a.a.s.\ allows the player to color (resp.\ choose from) all \(n\)
vertices without creating a (monochromatic) copy of \(F\), and (ii) for any \(p(n) \gg p_0\) every possible player strategy a.a.s.\ fails to do so. (The mere existence of such threshold functions can again be shown similarly to \cite[Lemma~2.1]{Marciniszyn2009a}.)

Prakash et al.~\cite{Prakash2009} proved results analogous
to those discussed above for the edge-coloring setting also for the vertex case.  Moreover, also the results of
M\"utze et al.~\cite{Mutze2011} for the Achlioptas game translate with
minimal changes to the vertex setting, even though this is not made explicit
in their work. (We will elaborate on this in Section~\ref{sec:translation}
below and in the Appendix.) To sum up, in the literature the vertex and the edge case of the two games are equally well understood, and the known results for them are in complete analogy to each other.

As we shall see, this pattern breaks down in the general case: We prove that in the vertex case the thresholds of the balanced Ramsey and the Achlioptas game coincide for all graphs $F$ and all $r\geq 2$. This is in contrast with our result for the edge case given in Theorem~\ref{thm:1}.

\begin{theorem}[Main result]\label{thm:main-result}
  For all graphs $F$ and all $r\geq 2$, the vertex versions of the balanced Ramsey game and the Achlioptas game have the same threshold.
\end{theorem}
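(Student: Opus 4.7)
The inequality $p_0^{\text{bR}}(F, r, n) \leq p_0^{\text{Ach}}(F, r, n)$ in the vertex setting is immediate, exactly as in Section~\ref{sec:achlioptas} for the edge case: any Achlioptas-losing strategy yields a balanced-Ramsey-losing strategy by ignoring colors $2, \ldots, r$. My plan is thus to establish the reverse inequality. Concretely, I would fix $p \ll p_0^{\text{Ach}}(F, r, n)$ and exhibit a balanced Ramsey strategy that a.a.s.\ colors all $n$ vertices without creating a monochromatic copy of $F$.

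The proposed strategy is \emph{maximum-matching greedy}. In each round $t$ with new vertices $u_1, \ldots, u_r$, form the bipartite compatibility graph $B_t$ on color set $[r]$ versus $\{u_1, \ldots, u_r\}$, with an edge $\{c, u_j\}$ whenever assigning color $c$ to $u_j$ does not immediately complete a monochromatic copy of $F$ in the current color-$c$ subgraph. Painter colors according to any perfect matching of $B_t$, and gives up if none exists. By Hall's theorem, Painter fails at round $t$ precisely when some $S \subseteq [r]$ has its compatibility-neighborhood of size less than $|S|$; equivalently, at least $r - |S| + 1$ of the round's vertices each simultaneously complete a monochromatic copy of $F$ in every color of $S$.

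The main technical step is a first-moment computation bounding the probability of this failure event. Each failure corresponds to a substructure in $G(n,p)$ comprising $(r - |S| + 1) \cdot |S|$ near-copies of $F$ (each missing one vertex), arranged so that each of $r - |S| + 1$ distinguished vertices of round $t$, when placed into any of $|S|$ prescribed near-copies, completes a copy of $F$ using $|V(F)| - 1$ vertices already colored with the corresponding color of $S$. Summing expected counts over rounds, subsets $S$, and unlabeled structure types, I would aim to show the total is $o(1)$ under $p \ll p_0^{\text{Ach}}$.

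The main obstacle is the density analysis: identifying the densest such substructure and matching its density to the threshold $p_0^{\text{Ach}}$ characterized by M\"utze et al.~\cite{Mutze2011}. The crucial feature of the \emph{vertex} setting is that coloring a vertex does not reassign or create any edges, so the near-copies in a failure configuration meet only at vertices---never along forced shared edges. Consequently, stacking extra near-copies on top of a single Achlioptas-threat cannot strictly increase the edge-to-vertex density beyond that of the witness subgraph of M\"utze et al. This vertex-only decomposition is exactly what breaks down in the edge setting, where color constraints can share edges, and is ultimately what enables the gap exhibited in Theorem~\ref{thm:1}.
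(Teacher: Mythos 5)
The upper-bound direction and the overall plan (exhibit a strategy and control failure events by a first-moment argument) are sound. But the strategy you propose does not match the paper's, and I believe it is not strong enough to reach the Achlioptas threshold.

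Your strategy is \emph{greedy}: at each round you only refuse color-vertex pairs that would immediately complete a monochromatic copy of the full graph $F$. The paper's strategy (Section~\ref{sec:lower-bound}) is substantially more cautious: Painter partitions the bipartite color-vs-vertex graph into $r$ perfect matchings, computes for each matching $M$ the quantity $d(M)=\min\{\lambda_{r,\theta}(H,\pi)\mid (H,\pi)\in\mathcal{S}(F),\ M \text{ closes a copy of } (H,\pi)\}$, and selects the matching maximizing $d(M)$, with a delicate acyclic-graph tie-breaking rule. Crucially, the minimization in $d(M)$ ranges over \emph{all ordered subgraphs} of $F$, not just $F$ itself. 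This is the vertex analogue of the $\lambda$-based strategy of M\"utze et al., and that choice is not cosmetic: for the Achlioptas game the greedy strategy (avoid closing $F$, choose arbitrarily otherwise) is generically suboptimal, which is exactly why M\"utze et al.\ needed the $\lambda$-machinery in the first place. A strategy that ignores partial subgraphs will accumulate dangerous ``near-threats'' that a $\lambda$-aware player would have dodged, and so your Hall-failure event can occur at a strictly earlier density than $n^{-\theta^*}$.

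This gap then propagates to your first-moment plan. Your argument would need the densest Hall-failure substructure (uncolored, after union-bounding over structure types) to still have $\mu_{r,\theta^*}>0$. The paper never needs to analyze such configurations directly; instead Lemma~\ref{lem:lower-bound-main-lemma} maintains an inductive invariant that every monochromatic copy of $(H,\pi)$ sits inside an $r$-matched witness $H'$ of bounded size with $\mu_{r,\theta}(H')\leq\lambda_{r,\theta}(H,\pi)$, and this invariant is a consequence of the specific $\lambda$-maximizing matching rule (including the tie-breaker). For the greedy strategy no such invariant holds, and the heuristic you offer --- that ``stacking extra near-copies cannot strictly increase the edge-to-vertex density beyond that of the witness subgraph'' --- is precisely the missing density analysis, not a proof of it. In particular it is not enough to observe that near-copies in different colors are vertex-disjoint; the paper's proof needs the $\mu$-control over overlaps captured by the graphs $K_i'$ in \eqref{eq:mu-H'-1}, which is where the vertex-versus-edge distinction actually enters (as noted in the remark following the proof of Lemma~\ref{lem:lower-bound-main-lemma}). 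So while your observation about why the vertex case is more benign points in the right direction, the argument as written has a genuine hole: the greedy strategy is not the right one, and the key density bound is asserted rather than established.
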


We give the explicit threshold formula of the two games in Section~\ref{sec:translation} below.

\subsection{Organization of this paper} Recall that the threshold of the (vertex) Achlioptas is always an upper bound on the threshold of the (vertex) balanced Ramsey game. Hence to prove Theorem~\ref{thm:main-result} it suffices to give an upper bound on the threshold of the vertex Achlioptas game and a matching lower bound on the vertex balanced Ramsey game.

In Section~\ref{sec:translation} we outline how the results of M\"utze et al.~\cite{Mutze2011} on the edge Achlioptas game, including their upper bound proof, translate to the vertex setting. The proofs for these results are given in the Appendix, as they follow their edge counterparts quite closely and are not the main contribution of this work. In Section~\ref{sec:tools}, we adapt some key concepts from~\cite{Mutze2011} to the vertex setting. In Section~\ref{sec:lower-bound}, we then use these to prove the desired matching lower bound for the vertex balanced Ramsey game. Finally, we prove Theorem~\ref{thm:1} concerning the edge case in Section~\ref{sec:edge-case}.

\section{On the vertex Achlioptas game}
\label{sec:translation}

In this section we adapt the formalism and the results of Mütze et al.\
\cite{Mutze2011} from the edge to the vertex case. The proofs are very similar; and we reproduce them in the Appendix. We also refer the reader to \cite{Mutze2011} for a more in-depth discussion of the intuition behind our threshold formulas.

A (vertex-)\emph{ordered graph} is a pair \((H,\pi)\), where \(H\) is a graph, \(h := v(H)\), and
\(\pi: V(H) \to \{1,\dotsc,h\}\) is an ordering of the vertices of $H$, conveniently denoted by its preimages, $\pi=(\pi^{-1}(1), \ldots, \pi^{-1}(h))$. In the context of the vertex Achlioptas or balanced Ramsey game, we interpret the ordering $\pi=:(u_1, \ldots, u_h)$ as the order in which the vertices
of \(H\) appeared in the process, where \(u_h\) is the
vertex that appeared first (the ``oldest'' vertex) and \(u_1\) is the vertex that appeared last (the ``youngest'' vertex).
We denote by \(\Pi(V(H))\) the set of all possible orderings of the vertices of
\(H\), and by
\begin{equation*}
  \mathcal{S}(F) := \bigl\{(H,\pi) \mid H\subseteq F \wedge \pi \in \Pi(V(H))\bigr\}
\end{equation*}
the set of all ordered subgraphs of \(F\).
For some ordered graph \((H,\pi)\) and a subgraph \(J\subseteq H\), we denote by
\(\pi|_J\) the order on the vertices of \(J\) induced by \(\pi\). Given an
ordered graph \((H,\pi)\), \(\pi=(u_1, \ldots, u_h)\), we denote by \(H\setminus \{u_1,\dotsc, u_i\}\) the graph obtained
from \(H\) by removing the vertices \(u_1, \dotsc, u_i\) and all edges that contain at least one one of these vertices. (In other words, \(H\setminus \{u_1,\dotsc, u_i\}\) is the subgraph of $H$ induced by the vertices $u_{i+1}, \ldots, u_h$.) We use $u\in H$ as a shorthand notation for $u\in V(H)$.

For any graph $H$, we use the notations $e(H):=|E(H)|$ and $v(H):=|V(H)|$. For any nonempty ordered graph \((H_1,\pi)\), \(\pi = (u_1, u_2, \dotsc, u_h)\), any
sequence of subgraphs \(H_2, \dotsc, H_h \subseteq H_1\) with \(H_i \subseteq
H_1 \setminus \{u_1, \dotsc, u_{i-1}\}\) and \(u_i \in H_i\) for all \(2 \leq
i \leq h\), and any integer \(r \geq 2\) define coefficients \(c_i = c_i(
(H_1,\pi), H_2, \dotsc, H_h, r)\) recursively by
\begin{equation}
  \label{eq:def-ci}
  \begin{split}
    c_1 &:= r,\\
    c_i &:= (r-1)\cdot\sum_{j=1}^{i-1}c_j\ind{u_i \in H_j}, \quad\quad
    2\leq i \leq h
  \end{split}
\end{equation}
(where \(\ind{u_i \in H_j}=1\) if \(u_i \in H_j\) and \(\ind{u_i \in H_j}=0\)
otherwise), and set
\begin{equation}
  \label{eq:def-drstar}
  \drs(H_1, \pi) :=
  \max_{\substack{H_2, \dotsc, H_h\\\forall i\geq 2: H_i \subseteq H_1\setminus
      \{u_1, \dotsc, u_{i-1}\}\wedge u_i \in H_i}}
    \frac{\sum_{i = 1}^{h}c_i e(H_i)}{1 + \sum_{i = 1}^{h}c_i \bigl(v(H_i)-1\bigr)}.
\end{equation}
Furthermore, we set for any integer \(r \geq 2\) and any nonempty graph \(F\),
\begin{equation}
  \label{eq:def-mrstar}
  m^{r*}(F) := \min_{\pi \in \Pi(V(F))}\max_{H_1\subseteq F} \drs(H_1, \pi|_{H_1}).
\end{equation}

With these notations and definitions, the main result of~\cite{Mutze2011} translates to the following statement for the vertex Achlioptas case:

\begin{theorem}
  \label{thm:achlioptas-ubound}
  Let \(F\) be a fixed nonempty graph, and let \(r \geq 2\) be a fixed
  integer. Then the threshold of the vertex Achlioptas game with parameters \(F\) and \(r\) is
  $$p_0(F,r,n)= n^{-1/m^{r*}(F)}.$$

  \noindent In particular, if \(p(n) \gg n^{-1/m^{r*}(F)}\), the player a.a.s.\ loses the
  vertex Achlioptas game with parameters \(F\) and \(r\), regardless of her
  strategy.
\end{theorem}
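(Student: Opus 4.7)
The plan is to adapt the proof of the edge-Achlioptas threshold theorem in Mütze et al.~\cite{Mutze2011} line by line to the vertex setting. The paper remarks that the lower bound on $p_0(F,r,n)$ will anyway follow, and indeed be improved from Achlioptas to balanced Ramsey, from the strategy constructed in Section~\ref{sec:lower-bound}, so the main content of this theorem is the upper bound on $p_0$ singled out by the ``in particular'' phrase: no strategy survives once $p\gg n^{-1/m^{r*}(F)}$.

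For the upper bound I would fix an arbitrary deterministic Painter strategy $\mathcal{A}$ (it suffices to consider deterministic strategies by the usual averaging argument) and show that a.a.s.\ $\mathcal{A}$ keeps a copy of $F$. Following~\cite{Mutze2011}, to each candidate ordering $\pi=(u_1,\dotsc,u_h)$ of $V(F)$ and to $\mathcal{A}$ one associates a \emph{witness structure}: a rooted tree of partial embeddings of $F$ into the process that records, at each step, not only the vertex the player elected to keep but also the $r-1$ vertices she rejected, each of which potentially completes a further copy of $F$ in the full vertex set $v_1,\dotsc,v_n$ together with the underlying $G(n,p)$. The coefficients $c_i$ of~\eqref{eq:def-ci} count exactly how many branches of this tree pass through the vertex $u_i$ of the $i$-th subgraph $H_i$ in the record; the recursion $c_i=(r-1)\sum_{j<i}c_j\ind{u_i\in H_j}$ captures the $(r-1)$-fold branching at each step of the game.

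The central calculation is then that the expected number of such witness structures in $G(n,p)$, for a fixed choice of $\pi$ and of $H_2,\dotsc,H_h$, is of order
\[
\Theta\Bigl(n^{\,1+\sum_{i=1}^{h}c_i\bigl(v(H_i)-1\bigr)}\, p^{\sum_{i=1}^{h}c_i e(H_i)}\Bigr),
\]
since each $H_i$ contributes $v(H_i)-1$ fresh vertices (the vertex $u_i$ is already fixed by the parent branch) and $e(H_i)$ new random edges. This expectation tends to infinity precisely when $p\gg n^{-1/\drs(H_1,\pi)}$. Taking the worst $H_1\subseteq F$ against the best ordering $\pi$ produces~\eqref{eq:def-mrstar}, and a second-moment or Janson-type argument promotes the mean bound to a.a.s.\ existence of a witness structure; by construction its presence forces $\mathcal{A}$ to keep a copy of $F$. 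A union bound over the finitely many orderings $\pi$ and subgraph sequences finishes the upper bound.

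The matching lower bound, if one wants a self-contained proof of the equality in Theorem~\ref{thm:achlioptas-ubound}, is obtained by fixing an ordering $\pi^*$ attaining the minimum in~\eqref{eq:def-mrstar} and running the strategy that greedily refuses to extend any $\pi^*$-ordered copy of $F$; a first-moment calculation with the same expected-count formula shows this strategy survives a.a.s.\ when $p\ll n^{-1/m^{r*}(F)}$. The main obstacle throughout is to import the witness-tree formalism of~\cite{Mutze2011} cleanly into the vertex-arrival model, where vertices appear in batches of $r$ together with all edges to the previously revealed portion of $G(n,p)$, and to check that this coupling interacts with the recursion~\eqref{eq:def-ci} exactly so as to produce the factor $v(H_i)-1$ in the denominator of~\eqref{eq:def-drstar}, in contrast to the $v(H_i)-2$ appearing in the edge case (since in the vertex setting each step pins one vertex while the remaining $v(H_i)-1$ vertices of $H_i$ still carry the randomness of $G(n,p)$).
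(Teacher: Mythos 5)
Your proposal identifies the right combinatorial object (the recursively defined ``witness structure'' $F_r^\pi$ of Definition~\ref{def:grey-black-F-pi}), the right expected-count formula, and the right reduction to the max-min over $\pi$ and $H_1$, all of which are indeed straightforward transcriptions of~\cite{Mutze2011} with the expected $(v(H_i)-1)$ replacing $(v(H_i)-2)$. However, there is a genuine gap in the step ``a second-moment or Janson-type argument promotes the mean bound to a.a.s.\ existence of a witness structure; by construction its presence forces $\mathcal{A}$ to keep a copy of $F$.''

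The difficulty is that the witness structure is a \emph{grey-black} $r$-matched graph: one vertex in every $r$-set is designated black, and those black designations must coincide with the vertices the player actually keeps. Which vertex the player keeps from a given $r$-set is an adaptive, history-dependent function of the random graph, so the event ``this positional copy of $F_r^\pi$ is present with the correct black coloring'' is \emph{not} a product event over edges, and a naive first/second-moment computation over $G(n,p)$ does not estimate its probability. If instead you drop the black coloring and only count uncolored copies of $F_r^\pi$ in $G^r(n,p)$, the count concentrates, but the mere presence of such a copy does not force the player: a vertex $v_i$ of the central $r$-set is wired only to the central (recursively black) copy of $F_-$ in $F^\pi_{-,i}$, and the player's earlier choices deep in $F^\pi_{-,i}$ need not have landed on that central copy. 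So ``by construction its presence forces $\mathcal{A}$'' is simply not true for the uncolored object, and is circular for the colored one.

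The paper resolves this with Lemma~\ref{lem:upper-bound-iteration}, which is an induction on $v(F)$ combined with a two-round split of the vertex-arrival process. The induction hypothesis, applied to the first $n/(2r)$ rounds, guarantees a.a.s.\ $\Omega(n^{\kappa}p^{e})$ \emph{black} copies of the smaller structure $(F\setminus u_1)_r^{\pi\setminus u_1}$, already aligned with the player's choices. These black copies are then a deterministic function of the first half of the process, and the edges and $r$-sets of the second half are fresh and independent of them; the second-moment computation is applied only to the random variable $Z$ counting second-round $r$-sets whose $r\deg_F(u_1)$ connecting edges to $r$ of those black copies all appear, and this variable really is a sum of monotone indicator products in the second-round randomness. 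The variance bound in turn relies on Lemma~\ref{lem:subsets-of-Fpi-are-not-dense} (all subgraphs of $F_r^\pi$ have nonnegative $\mu_{r,\theta'}$), which you do not mention and which is where the bulk of the technical work of the upper bound sits; without it the $o(\E[Z]^2)$ estimate for $\Var[Z]$ does not go through. In short: the calculation you propose is the right first-moment heuristic, but the passage from a fixed-graph count in $G(n,p)$ to a statement about an adaptively colored game board needs the per-level two-round induction, not a single global concentration argument.
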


As discussed in the introduction, this result also yields an upper bound of $n^{-1/m^{r*}(F)}$ on the threshold of the vertex balanced Ramsey game with parameters $F$ and $r$. We will prove a matching lower bound in the next section. We now present an alternative formulation of Theorem~\ref{thm:achlioptas-ubound} that is more convenient for this lower bound proof. Again we refer to~\cite{Mutze2011} for a discussion of the advantages of this alternative viewpoint.

Given an ordered graph $(H,\pi)$, $\pi=(u_1, \ldots, u_h)$, we use $H\setminus u_1$ as a shorthand notation for $H\setminus \{u_1\}$, and $\pi\setminus u_1$ as a shorthand notation for $\pi|_{H\setminus \{u_1\}}$. As usual we denote for $u\in V(H)$ by $\deg_H(u)$ the degree of $u$ in~$H$.

For a fixed integer \(r \geq 2\) and a fixed real value \(0 \leq \theta \leq 2\) we
recursively define for any ordered graph \((H,\pi)\), \(\pi = (u_1, \dotsc,
u_h)\), the following quantity:
\begin{equation}
  \label{eq:lambda-r}
  \lr(H,\pi) := \left\{
  \begin{array}{l}
    0, \hfill\text{if \(v(H) = 0\)}\\
    \begin{split}
      1 &+ \Big(\lr(H\setminus u_1, \pi \setminus u_1)  - \theta \cdot\deg_{H}(u_1)\Big)
      \\&+ (r-1)\cdot\min_{\substack{J\subseteq H\\u_1 \in
          J}}\Big(\lr(J\setminus u_1, \pi|_{J\setminus u_1}) -
      \theta\cdot \deg_{J}(u_1)\Big),\hspace{15pt}
    \end{split}
    \\\hfill\text{otherwise.}
  \end{array}\right.
\end{equation}

We further define for \(r\) and \(\theta\) as before and any \(F\) the
quantity
\begin{equation}
  \label{eq:Lambda-r-theta}
  \Lambda_{r,\theta}(F) := \max_{\pi\in \Pi(V(F))}\min_{H\subseteq F}\lr(H,\pi|_H).
\end{equation}
It is straightforward to check that as a function of \(\theta\) for a fixed
\(r\) and a fixed nonempty graph \((H,\pi)\) respectively \(F\), both
\(\lr(H,\pi)\) and \(\Lr(F)\) are continuous, piecewise linear with integer coefficients, and non-increasing. Furthermore, both functions have a unique rational root.

Analogously to~\cite{Mutze2011} one can prove:

\begin{theorem}\label{thm:threshold-equivalence}
  Let \(F\) be a fixed nonempty graph, and let \(r\geq 2\) be a fixed
  integer. Let \(\theta^* = \theta^*(F,r)\) be the unique solution of
  \begin{equation}
    \Lr(F) \stackrel{!}{=} 0,
  \end{equation}
  where \(\Lr(F)\) is defined in \eqref{eq:lambda-r} and
  \eqref{eq:Lambda-r-theta}. Then we have
  \begin{equation*}
    m^{r*}(F) = \frac{1}{\theta^*(F,r)}\ .
  \end{equation*}
  Consequently, the threshold of the vertex Achlioptas game with parameters $F$ and $r$ can be written as
  $$p_0(F,r,n)= n^{-\theta^*(F,r)}.$$
\end{theorem}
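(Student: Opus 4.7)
The plan is to reduce Theorem~\ref{thm:threshold-equivalence} to a pointwise identity relating $\lr$ and $\drs$ on ordered graphs, and then assemble the global statement via elementary min/max manipulations. Specifically, I aim to prove that for every nonempty ordered graph $(H,\pi)$ and every $\theta\geq 0$,
\begin{equation*}
  \lr(H,\pi)\geq 0 \iff \theta\leq 1/\drs(H,\pi). \quad(\star)
\end{equation*}
Once $(\star)$ is in hand, the continuity, piecewise linearity and monotonicity in $\theta$ of $\lr$ (and hence of $\Lr$) noted just before the theorem yield
\begin{align*}
  \theta^*(F,r)
  &= \sup\{\theta\geq 0:\Lr(F)\geq 0\}
   = \sup\Bigl\{\theta:\exists\pi,\,\forall\,\emptyset\neq H\subseteq F:\lr(H,\pi|_H)\geq 0\Bigr\}\\
  &\stackrel{(\star)}{=}\max_{\pi}\min_{\emptyset\neq H\subseteq F}\frac{1}{\drs(H,\pi|_H)}
   =\frac{1}{\min_{\pi}\max_{\emptyset\neq H\subseteq F}\drs(H,\pi|_H)}
   =\frac{1}{m^{r*}(F)},
\end{align*}
using that $x\mapsto 1/x$ interchanges $\min$ and $\max$ on positive reals, together with the definition~\eqref{eq:def-mrstar}.

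I would prove $(\star)$ by induction on $v(H)$ via a \emph{branching pattern} interpretation of~\eqref{eq:lambda-r}. A branching pattern $\sigma$ records a consistent choice, at every recursive call of $\lr$ that the formula spawns, of the subgraph $J\ni u_1$ realising the minimum; direct expansion then shows $\lr(H,\pi)=\min_\sigma(v_\sigma-\theta e_\sigma)$ for an affine family of functions of $\theta$ indexed by $\sigma$. The crucial observation is that the \emph{full} patterns -- those in which $J$ equals the entire current subgraph at every recursion level -- are in bijection with the admissible sequences $(H_2,\dotsc,H_h)$ from~\eqref{eq:def-drstar}. Tracking the multiplicities $1$ (main branch) and $r-1$ (each of the $r-1$ side branches) along the recursion tree, the weight with which vertex $u_i$ is counted in such a full pattern is exactly the coefficient $c_i$ from~\eqref{eq:def-ci}, so that $v_\sigma=1+\sum_i c_i(v(H_i)-1)$ and $e_\sigma=\sum_i c_i e(H_i)$. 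For the sequence $(H_i^*)$ attaining the maximum in~\eqref{eq:def-drstar} we then get $v_{\sigma^*}-\theta e_{\sigma^*}=0$ exactly at $\theta=1/\drs(H,\pi)$ and strictly negative for $\theta>1/\drs(H,\pi)$; since $\lr\leq v_{\sigma^*}-\theta e_{\sigma^*}$, this immediately gives the ``$\Rightarrow$'' direction of $(\star)$.

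The ``$\Leftarrow$'' direction -- equivalently, $v_\sigma-\theta e_\sigma\geq 0$ for every branching pattern $\sigma$ in the range $\theta\leq 1/\drs(H,\pi)$ -- is the main obstacle. For full patterns it is immediate, since any admissible $(H_i)$ from~\eqref{eq:def-drstar} satisfies $A/B\leq\drs(H,\pi)$ and hence $B/A\geq 1/\drs(H,\pi)$. For non-full patterns the plan is a pruning/averaging argument: any non-full $\sigma$ is obtained from a full pattern by truncating some side subtree when the recursion descends into a smaller ordered graph, which removes a pair $(\Delta v,\Delta e)$ matching a full branching pattern on that smaller graph. The inductive hypothesis applied to this sub-pattern, combined with the fact that any admissible sub-sequence extends to an admissible $(H_i)$ for the original $(H,\pi)$ with coefficients bounded by the corresponding $c_i$'s, propagates the inequality through the truncation and keeps $e_\sigma/v_\sigma\leq\drs(H,\pi)$. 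Carrying out this $c_i$-bookkeeping across arbitrary branching patterns is the most delicate part of the argument, analogous to the corresponding step in the edge-case proof of~\cite{Mutze2011}; once it is handled, $(\star)$ follows, and Theorem~\ref{thm:threshold-equivalence} is assembled via the reduction above.
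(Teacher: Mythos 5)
Your plan centers on a pointwise biconditional $(\star)$: for \emph{every} ordered graph $(H,\pi)$, the root of $\lambda_{r,\theta}(H,\pi)$ in $\theta$ equals $1/\drs(H,\pi)$. This is a genuinely stronger claim than what the paper actually proves. In the paper's argument (Lemma~\ref{lem:threshold-equivalence}), one only establishes the \emph{functional} identity
\[
  \min_{\vH\in\cvH(F,\pi)}\mu^*_{r,\theta}(\vH)
  \;=\;\min_{H\subseteq F}\lr(H,\pi|_H),
\]
and the pointwise relation is weaker: for a fixed $H_1$, one gets
\[
  \min_{H_2,\dots,H_h}\mu^*_{r,\theta}(\vH)
  \;=\; 1 + r\bigl(\lr(H_1\setminus u_1,\sigma\setminus u_1)-\theta\deg_{H_1}(u_1)\bigr)
  \;\geq\;\lr(H_1,\sigma),
\]
with strict inequality whenever the minimum over $J$ in~\eqref{eq:lambda-r} is attained by a proper subgraph $J\subsetneq H_1$. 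Lemma~\ref{lem:lr-drop-minimization} upgrades this to an equality only \emph{at the argmin over $H_1$}, which is all the paper needs (the roots of the two lower envelopes then coincide). Your proposal, by contrast, requires equality of roots for each individual $(H,\pi)$, and nothing you say rules out a graph $(H_1,\sigma)$ for which, at $\theta=\theta_0(H_1,\sigma)$, the minimizing $J$ is a proper subgraph; in that case the root of $\lr(H_1,\sigma)$ lies strictly below $1/\drs(H_1,\sigma)$ and $(\star)$ fails. Even if $(\star)$ happens to be true, you would owe a proof of it, and the one you sketch does not supply it.

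Concretely, there are two gaps. First, the claimed ``bijection'' between full branching patterns and admissible sequences $(H_2,\dots,H_h)$ is not coherent as stated: in~\eqref{eq:lambda-r} the main branch is forced to $H\setminus u_1$ and the only freedom is the choice of $J$ for the $r-1$ side branches, so taking ``$J$ equals the entire current subgraph at every recursion level'' yields exactly \emph{one} branching pattern (corresponding to the single sequence $H_i=H_1\setminus\{u_1,\dots,u_{i-1}\}$), not a family in bijection with all admissible $(H_2,\dots,H_h)$. As a consequence, the upper bound $\lr\leq v_{\sigma^*}-\theta e_{\sigma^*}$ that your ``$\Rightarrow$'' direction relies on -- where $\sigma^*$ is the $\drs$-maximizing sequence -- is not justified; it would require exhibiting a branching pattern realizing $(v_{\sigma^*},e_{\sigma^*})$, and you have not set up the correspondence precisely enough to do that. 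Second, and more importantly, you yourself flag the ``$\Leftarrow$'' direction as the main obstacle, and the ``pruning/averaging'' argument is only described at a schematic level; the crucial step -- that truncating a side subtree cannot increase the density $e_\sigma/v_\sigma$ beyond $\drs(H,\pi)$ -- is exactly where the coefficients $c_i$ and the subgraph constraints interact nontrivially, and no proof is given. The paper's route through $\tilde\mu,\tilde\lambda,\bar\lambda$, the maximality argument~\eqref{eq:Hi-Hj-subgraph}, and the final application of Lemma~\ref{lem:lr-drop-minimization} is precisely how this interaction is controlled, and I would encourage you either to adopt that machinery or to first settle whether the pointwise $(\star)$ is even true before trying to prove it.
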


\section{\texorpdfstring{$r$-matched graphs}{r-matched graphs}}
\label{sec:tools}

In this section we adapt some key notions concerning $r$-(edge-)matched graphs introduced in~\cite{Krivelevich2010} and~\cite{Mutze2011} to the vertex setting studied here. We will need these concepts in our proof of a lower bound on the vertex balanced Ramsey threshold.

\begin{definition}[\(r\)-matched graph]
  An \emph{\(r\)-(vertex-)matched graph} \(G = (V, E, \mathcal{K})\) is a (simple,
  undirected) graph with vertex set \(V\) and edge set \(E\) together with a
  partition \(\mathcal{K}\) of\/ \(V\) into sets of size \(r\), the
  \emph{\(r\)-sets}. With \(\kappa(G) := \abs{\mathcal{K}}=|V|/r\) we denote the
  number of \(r\)-sets of\/ \(G\).  We refer to the (non-\(r\)-matched) graph
  \(G' = (V, E)\) as the \emph{underlying graph of \(G\)}.
\end{definition}

We extend standard notions like graph isomorphism, subgraph containment etc.\ to \(r\)-matched graphs in the obvious way.

Recall that the vertex Achlioptas and vertex balanced Ramsey game is played on a binomial random graph $\Gnp$ on vertex set $\{v_1, \ldots, v_n\}$ that is initially hidden from the player's view and revealed $r$ vertices at a time. We denote by $G_i$ the graph induced by $\{v_1, \ldots, v_{ir}\}$ (i.e. the graph visible to the player after $i$ steps), viewed as an (uncolored) $r$-matched graph with partition $\mathcal{K}=\{\{v_1,\ldots, v_r\},\{v_{r+1}, \ldots, v_{2r}\}, \ldots, \{v_{(i-1)r+1},\ldots,v_{ir}\}\}$. In particular, $G_{n/r}$ is the random graph $\Gnp$ generated before the game starts, \emph{viewed as an $r$-matched graph} with the obvious partition. We denote a generic instance of such a random $r$-matched graph by $\Grnp$ in the following.

In our lower bound proof we will need the following simple lemma.

\begin{lemma}\label{lem:r-matched-expected}
  Let \(r\geq 2\) be a fixed integer, and let \(F\) be a fixed \(r\)-matched graph
  with at least one edge. Then the expected number of copies of \(F\) in \(G^r(n,p)\) is
    \(\Theta(n^{\kappa(F)}p^{e(F)})\).

\end{lemma}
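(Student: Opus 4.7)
The plan is a standard first-moment / linearity-of-expectation calculation, made only slightly less routine by the need to count embeddings respecting the $r$-matched structure.

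First I would fix a formal notion of a \emph{copy} of the $r$-matched graph $F$ in $\Grnp$: an $r$-matched subgraph of $\Grnp$ that is isomorphic to $F$ as an $r$-matched graph (i.e., via a bijection of vertex sets that preserves both the edge relation and the $r$-set partition). Equivalently, the number of copies of $F$ equals the number of injective $r$-matching-preserving homomorphisms from $F$ into $\Grnp$, divided by the number $|\Aut(F)|$ of automorphisms of $F$ as an $r$-matched graph.

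Next I would count the number of such potential embeddings into the \emph{complete} $r$-matched graph on $n$ vertices (i.e., ignoring which edges are actually present). Any embedding is specified by (i) choosing which $\kappa(F)$ of the $n/r$ available $r$-sets of $\Grnp$ host the images of the $r$-sets of $F$, contributing $\binom{n/r}{\kappa(F)} \cdot \kappa(F)!$ ordered choices, and (ii) for each of these $\kappa(F)$ $r$-sets, choosing a bijection between the corresponding $r$-sets of $F$ and of $\Grnp$, contributing $(r!)^{\kappa(F)}$ further choices. Since $r$ and $F$ (and hence $\kappa(F)$ and $|\Aut(F)|$) are fixed, the total number of potential copies is $\binom{n/r}{\kappa(F)} \cdot \kappa(F)! \cdot (r!)^{\kappa(F)} / |\Aut(F)| = \Theta(n^{\kappa(F)})$.

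Finally, by the independence of edges in the underlying binomial random graph, each such potential copy is actually present in $\Grnp$ with probability exactly $p^{e(F)}$ (this uses that the $\binom{n}{2}$ pairs are independent; the $r$-matching structure on the vertex set is deterministic and imposes no constraint). By linearity of expectation, the expected number of copies is therefore $\Theta(n^{\kappa(F)}) \cdot p^{e(F)} = \Theta(n^{\kappa(F)} p^{e(F)})$, as claimed. No step poses any real obstacle; the only point to keep in mind is to treat all combinatorial factors depending on $F$ and $r$ (in particular $|\Aut(F)|$ and $(r!)^{\kappa(F)}$) as constants absorbed into the $\Theta(\cdot)$.
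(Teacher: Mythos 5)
Your proof is correct and follows exactly the same first-moment argument as the paper; the paper simply compresses the combinatorial factors $\kappa(F)!\,(r!)^{\kappa(F)}/|\Aut(F)|$ that you spell out into a single $\Theta(1)$.
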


\begin{proof}
There are $\binom{n/r}{\kappa(F)} \cdot \Theta(1) = \Theta (n^{\kappa(F)})$ possible occurrences of $F$ in $\Grnp$, and each of them appears with probability $p^{e(F)}$.
\end{proof}

For $r\geq 2$, any \(r\)-matched graph
\(F\) and \(0 \leq \theta \leq 2\) let
\begin{equation}
  \label{eq:def-mu-r-theta}
  \mu_{r,\theta}(F) := \kappa(F) - \theta \cdot e(F).
\end{equation}
Note that, by the above lemma, for \(p:= n^{-\theta}\) the expected number of copies of \(F\) in \(G^r(n,p)\) is of order $n^{\mu_{r,\theta}(F)}$.

\section{A matching lower bound on the vertex balanced Ramsey threshold}
\label{sec:lower-bound}

In this section we prove the main contribution of this work, a lower bound on the threshold of  the balanced Ramsey game that matches the upper bound given by Theorem~\ref{thm:achlioptas-ubound}. In view of Theorem~\ref{thm:threshold-equivalence}, it suffices to prove the following statement.

\begin{theorem} \label{thm:balanced-lbound} Let \(F\) be a fixed
  nonempty graph, and let \(r \geq 2\) be a fixed integer. Let \(\theta^* =
  \theta^*(F,r)\) be the unique solution of
  \begin{equation}
    \label{eq:theta-star-def}
    \Lr(F) \stackrel{!}{=} 0,
  \end{equation}
  where \(\Lr(F)\) is defined in \eqref{eq:lambda-r} and
  \eqref{eq:Lambda-r-theta}. Then for all \(p \ll n^{-\theta^*}\) there exists
  a strategy such that Painter can a.a.s.\ win the vertex balanced Ramsey game
  with parameters $F$ and $r$.
\end{theorem}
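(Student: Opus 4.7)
The plan is to design Painter's strategy from the recursion~\eqref{eq:lambda-r} and to show that any failure of this strategy forces the presence in $\Grnp$ of an $r$-matched witness graph $W$ with $\mr(W) = \Lr(F)$. Since $\Lr$ has unique root $\theta^*$ and is non-increasing in $\theta$, for $\theta > \theta^*$ (equivalently $p = n^{-\theta} \ll n^{-\theta^*}$) we have $\Lr(F) < 0$ strictly. The witnesses take one of finitely many shapes (indexed by the recursive choices of $H \subseteq F$ and $J \subseteq H$), so Lemma~\ref{lem:r-matched-expected} together with Markov's inequality a.a.s.\ rules them all out of $\Grnp$, and hence Painter a.a.s.\ wins.

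\textbf{Painter's strategy.} Fix an ordering $\pi^* = (u_1, \ldots, u_f)$ of $V(F)$ attaining the max in~\eqref{eq:Lambda-r-theta} at $\theta = \theta^*$. When the next batch of $r$ vertices arrives together with its incident edges to older vertices, Painter considers all $r!$ ways to assign the $r$ colors to the batch. For each candidate assignment, each color class, and each ordered subgraph $(H, \pi^*|_H) \in \cS(F)$ newly completed in that class (with a batch vertex serving as the youngest vertex of a copy of $H$ respecting $\pi^*|_H$), she records the value $\lrs(H, \pi^*|_H)$. She picks any assignment that lexicographically minimizes this list; in particular, she never completes any $(H, \pi^*|_H)$ with $\lrs(H, \pi^*|_H) > 0$ when an alternative assignment would avoid doing so.

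\textbf{Witness construction.} The key inductive claim, proved on $v(H)$ following~\eqref{eq:lambda-r}, is: whenever Painter's strategy completes a copy of $(H, \pi^*|_H)$ in some color, $\Grnp$ contains an $r$-matched subgraph $W \supseteq H$ satisfying $\mr(W) = \lr(H, \pi^*|_H)$. The induction mirrors~\eqref{eq:lambda-r} term by term: let $u_1$ denote the youngest vertex of the $H$-copy, contained in the most recently arrived $r$-set (contributing $+1$ to $\kappa(W)$) and incident to $\deg_H(u_1)$ edges of $H$ (subtracting $\theta\deg_H(u_1)$ from $\mr(W)$). The inductive sub-witness for $H\setminus u_1$ in the same color yields the $\lr(H\setminus u_1, \pi^*|_{H\setminus u_1})$ contribution. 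For each of the $r-1$ siblings $x$ of $u_1$ in that batch, Painter's refusal to assign $u_1$'s color $c$ to $x$ must have been forced: some ordered subgraph $J \subseteq H$ with $u_1 \in J$ already existed in $x$'s color, so that giving $x$ color $c$ would have completed a copy of $J$ there; by induction on $J\setminus u_1$ this contributes a sub-witness of value $\lr(J\setminus u_1, \pi^*|_{J\setminus u_1}) - \theta\deg_J(u_1)$, and Painter's tie-breaking makes her choice correspond to the minimum such $J$, producing the $(r-1)\min_J$ term. Applying the claim to the subgraph $H^* \subseteq F$ attaining $\min_{H\subseteq F} \lr(H, \pi^*|_H) = \Lr(F)$ (which is necessarily completed whenever a monochromatic copy of $F$ is created) yields a witness $W$ with $\mr(W) = \Lr(F) < 0$.

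\textbf{Main obstacle.} The principal difficulty is the gluing step in the witness construction: the sub-witnesses supplied by the $r-1$ siblings of $u_1$ share the $r$-set containing $u_1$ with the main witness, and may further overlap with $H\setminus u_1$ or with each other. One must show that these overlaps do not spoil the $\mr$-computation predicted by~\eqref{eq:lambda-r}---any shared vertex or edge is counted only once in $W$, yet the additive structure of the recursion still holds---and that Painter's local tie-breaking is globally consistent so that the claimed sibling structures actually exist as inductive witnesses. A secondary subtlety is verifying that the arrival order of vertices in each sibling's sub-witness is compatible with the prescribed order $\pi^*|_J$ at every level of the recursion, which uses the maximality of $\pi^*$ in~\eqref{eq:Lambda-r-theta}. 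This is the vertex analogue of the delicate accounting developed in~\cite{Mutze2011} for the edge Achlioptas game, now further complicated by the balanced coloring constraint that each of the $r$ colors is used exactly once per batch.
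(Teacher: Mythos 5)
Your high-level plan (play according to the $\lr$-recursion, show that any monochromatic copy forces a small witness $W$ with $\mr(W)\le\lr\le 0$, then kill all such witnesses via Lemma~\ref{lem:r-matched-expected} and Markov) is the same as the paper's, but the execution has several genuine gaps.

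First, fixing a single ordering $\pi^*$ in advance is wrong. The order in which the vertices of a potential monochromatic copy of $F$ arrive is dictated by the random process, not by Painter, so the strategy must track \emph{all} ordered subgraphs $(H,\pi)\in\cS(F)$, not only those with ordering $\pi^*|_H$. The maximization over $\pi$ in~\eqref{eq:Lambda-r-theta} reflects the adversarial worst-case ordering; what one actually uses in the proof (as the paper does) is that $\min_H\lrs(H,\pi|_H)\le 0$ for \emph{every} $\pi$. Second, your key inductive claim asserts equality $\mr(W)=\lr(H,\pi^*|_H)$; this is false in general because the sub-witnesses overlap. When gluing the witnesses $H_1',J_2',\dots,J_r'$ along the new $r$-set, the intersections $K_i'$ produce the correction terms in~\eqref{eq:mu-H'-1}, and all one can prove is the \emph{inequality}~\eqref{eq:invariant-inequality}, and that only after first splitting off the case where some intersection already has $\mr<0$.

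The biggest gap is the one you flag yourself as the ``main obstacle'' and then leave unresolved: bounding the witness size by a constant $v_{\max}$. Without a size bound, the family of possible witnesses is not finite, and the Markov argument collapses. In the paper this bound is precisely where all the machinery lives: the carefully specified tie-breaking via the acyclic auxiliary graph $\cG'$, the decomposition tree $\mathcal{T}(H')$ with black/grey/red arcs, the quantitative gap $\epsilon$ from~\eqref{eq:epsilon-lambda}, the bound on the number of red arcs via~\eqref{eq:mr=lr-eps}, and the argument that grey/black paths in $\mathcal{T}(H')$ map injectively into $\cG'$ and hence have length at most $|\cS(F)|$. None of this appears in your proposal; ``lexicographic minimization'' of an unspecified list is not a substitute, and in fact your description of the tie-break (``she never completes any $(H,\pi^*|_H)$ with $\lrs>0$ when she can avoid it'') is backwards --- the dangerous graphs are those with $\lr\le 0$, and Painter should \emph{maximize} the minimum $\lr$-value among the ordered graphs closed, exactly as in~\eqref{eq:weights}. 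As written, your strategy is both incorrectly restricted (to a single ordering) and oriented the wrong way, and the size-bound argument, which is the technical heart of Lemma~\ref{lem:lower-bound-main-lemma}, is entirely absent.
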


We now describe the general coloring strategy for which we will prove Theorem~\ref{thm:balanced-lbound}. The strategy is a natural extension of the one proposed in~\cite{Mutze2011} for the (edge or vertex) Achlioptas game; and we use very similar notations and conventions in the following. Note however that the analogous extension of the \emph{edge} Achlioptas strategy fails to yield a similar lower bound, cfr.\ Theorem~\ref{thm:1} and its proof in Section~\ref{sec:edge-case}.

Crucially, our strategy keeps track of the \emph{order} in which copies of subgraphs appear on the board. We say that the board contains a (monochromatic) copy of $(H,\pi)$, $\pi=(u_1, \ldots, u_h)$, if it contains a (monochromatic) subgraph isomorphic to $H$ whose vertices appeared in the order specified by $\pi$ (with $u_h$ being the first and $u_1$ being the last vertex to appear).

Let $r\geq 2$ and $0\leq \theta \leq 2$ be arbitrary but fixed. (Eventually we will set $\theta=\theta^*(F,r)$, but for the moment it is more convenient to work with an arbitrary $\theta$.) We denote with \(C\) the set of available colors.
Consider a fixed step of the game, and let \(R\) denote
the \(r\)-set presented to the player in that step. (We have $R=\{v_{(i-1)r+1}, \ldots, v_{ir}\}$ for some $i$, $1\leq i\leq n/r$.) Painter's decision in this step can be formalized
as choosing a perfect matching in the complete bipartite graph \(B\) with parts
\(C\) and \(R\), where each edge corresponds to assigning a color to a vertex. We say that a perfect matching \(M\) \emph{closes a copy} of some graph
\((H,\pi) \in \mathcal{S}(F)\) if coloring \(R\) according to \(M\) creates a monochromatic copy of \((H,\pi)\) on the board (clearly, then the last vertex of $H$ according to $\pi$ is in $R$).

Painter's strategy now is the following: She partitions $B$ into \(r\) disjoint perfect matchings \(M_1, \dotsc, M_r\) arbitrarily. (By an easy application of the marriage theorem, this is always possible.) For each of these matchings she determines the value
\begin{equation}
  \label{eq:weights}
    d(M) := \min\bigl\{\lr(H,\pi) \mid\ \text{\((H,\pi)\in \mathcal{S}(F)\)
      \(\wedge\) \(M\) closes a copy of
      \((H,\pi)\)}\bigr\},
\end{equation}
and chooses the matching for which this value is maximal.

If there is not a unique maximum, ties are broken according to the
following somewhat technical criterion.
Consider the directed graph $\cG=\cG(F)$ with vertex set $\cS(F)$ and arcs given by proper (ordered) subgraph inclusion; i.e., from every vertex $(H, \pi)$ there are arcs to all vertices $(J,\pi|_J)$ with $J\subsetneq H$. Clearly, $\cG$ contains no directed cycles. We extend $\cG$ to a graph $\cG'=\cG'(F,r,\theta)$ by first connecting every pair of distinct vertices $(H_1, \pi_1)$, $(H_2, \pi_2)$ for which $\lambda_{r,\theta}(H_1, \pi_1)=\lambda_{r,\theta}(H_2, \pi_2)$ with an (undirected) edge, and then orienting these additional edges in such a way that the directed graph $\cG'$ remains acyclic. (It is easy to see that this is always possible.) Note that for every fixed $\lambda_0\in\RR$ this yields a total ordering on all graphs $(H,\pi)$ with $\lambda_{r,\theta}(H,\pi)=\lambda_0$. We say that $(H_1, \pi_1)$ is higher than $(H_2, \pi_2)$ in this ordering if the corresponding arc in $\cG'$ is directed from $(H_1, \pi_1)$ to $(H_2, \pi_2)$.

Our strategy breaks ties according to this ordering: Whenever we have a choice between different perfect matchings with the same value $d(M)$, then for each such matching we consider the set of ordered graphs
\begin{equation} \label{def:mathJ-M}
    \mathcal{J}(M) := \argmin\{\lr(H,\pi) \mid (H,\pi) \in \mathcal{S}(F) \wedge \text{\(M\)
      closes a copy of \((H,\pi)\)}\}
\end{equation}
and, among these, we let \(J(M) \in \mathcal{J}(M)\) denote the graph that is \emph{lowest} in the total
ordering for $\lambda_0:=d(M)$. Then we select the matching $M$ for which $J(M)$ is \emph{highest} in the total ordering for $\lambda_0$.

The next lemma states a witness graph invariant that is crucial in our proof of
Theorem~\ref{thm:balanced-lbound}. Note that the statement of the lemma is purely deterministic.

\begin{lemma}
  \label{lem:lower-bound-main-lemma}
  Let \(r \geq 2\) be an integer and \(0 \leq \theta \leq 2\) be
  fixed. Following the above vertex coloring strategy ensures that the
  following invariant is maintained throughout the game for some \(v_{\max} =
  v_{\max}(F,r,\theta)\):

  The graph $G_i$ contains a copy of some \(r\)-matched graph \(K'\) with
  \(v(K')\leq v_{\max}\) and
  \begin{equation*}
    \mr(K') < 0,
  \end{equation*}
  or for every \((H,\pi) \in \mathcal{S}(F)\) we have that each monochromatic
  copy of \((H,\pi)\) on the board is contained in an \(r\)-matched subgraph
  \(H'\) of\/ $G_i$ with \(v(H') \leq v_{\max}\) and
  \begin{equation}
    \label{eq:invariant-inequality}
    \mr(H') \leq \lr(H,\pi),
  \end{equation}
  where \(\mr()\) and \(\lr()\) are defined in
  \eqref{eq:def-mu-r-theta} and \eqref{eq:lambda-r}, respectively.
\end{lemma}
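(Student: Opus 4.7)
I would proceed by induction on the step index $i \in \{0, 1, \ldots, n/r\}$. The base case $i = 0$ is trivial: $G_0$ has no vertices, hence no monochromatic copies, and the second alternative holds vacuously. For the inductive step, assume the invariant holds after step $i-1$. If the first alternative holds there, the bad $r$-matched witness $K'$ persists into step $i$, so it suffices to assume the second alternative. Let $R$ denote the $r$-set revealed in step $i$ and $M^*$ the matching chosen by Painter, breaking ties as prescribed on $\cG'(F, r, \theta)$.

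Only monochromatic copies whose youngest vertex is in $R$ need fresh witnesses. I would fix such a copy of $(H,\pi) \in \cS(F)$, $\pi = (u_1, \ldots, u_h)$, and set $c := M^*(u_1)$. By the inductive hypothesis applied to the $c$-monochromatic copy of $(H \setminus u_1, \pi \setminus u_1)$ already present on the board at step $i-1$, there is a witness $H_1'$ with $v(H_1') \leq v_{\max}$ and $\mr(H_1') \leq \lr(H \setminus u_1, \pi \setminus u_1)$. Next, for each color $c' \neq c$, I would look at the unique matching $M_{j(c')}$ among Painter's $r$ options that assigns $c'$ to $u_1$; since Painter preferred $M^*$, the inequality $d(M_{j(c')}) \leq d(M^*) \leq \lr(H,\pi)$ guarantees the existence of an ordered graph $(J_{c'}, \sigma_{c'})$ closed by $M_{j(c')}$ with $\lr(J_{c'},\sigma_{c'}) \leq \lr(H,\pi)$, and I would select it so that $u_1$ is its youngest vertex.

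The witness $H'$ for $(H,\pi)$ is then built as the union of three types of pieces glued along $R$: (i) $H_1'$; (ii) the new $r$-set $R$ together with the $\deg_H(u_1)$ edges of $H$ incident to $u_1$; (iii) for each $c' \neq c$, the inductive witness for $(J_{c'} \setminus u_1, \sigma_{c'} \setminus u_1)$ together with the $\deg_{J_{c'}}(u_1)$ edges of $J_{c'}$ incident to $u_1$. Using $\mr(A \cup B) = \mr(A) + \mr(B) - \mr(A \cap B)$, these pieces are subadditive in $\mr$ as long as all pairwise intersections have $\mr \geq 0$; any intersection with $\mr < 0$ has vertex count bounded by $v_{\max}$ and thus delivers the first alternative of the invariant. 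Summing the contributions $+1$ from $R$, $-\theta$ per edge incident to $u_1$, and $\mr(H_1') \leq \lr(H \setminus u_1)$, respectively $\mr(\text{witness for }J_{c'} \setminus u_1) \leq \lr(J_{c'} \setminus u_1)$, the resulting upper bound on $\mr(H')$ matches the recursion \eqref{eq:lambda-r} term by term, yielding $\mr(H') \leq \lr(H,\pi)$. The bound $v(H') \leq v_{\max}$ follows by induction: each step adds at most $r$ new vertices from $R$ plus contributions from at most $r-1$ inductive witnesses of bounded size, so $v_{\max}$ unfolds through a finite recursion on ordered subgraphs of $F$.

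The hard part will be the selection of $(J_{c'}, \sigma_{c'})$ in step (iii). A priori the graphs closed by $M_{j(c')}$ can have any vertex of $R$ as their youngest, and the argmin defining $d(M_{j(c')})$ may well be achieved by a copy whose youngest vertex lies in $R \setminus \{u_1\}$, which would spoil the recursive accounting above. The tie-breaking rule on $\cG'(F, r, \theta)$ is engineered precisely to support a swap argument for this case: by consistently picking the matching whose critical closed copy $J(M)$ is highest in the total order on graphs with $\lr$-value $d(M)$, one can transfer any misaligned closed copy into one with $u_1$ as its youngest vertex and $\lr$-value no larger than that of the original. Verifying this swap --- essentially the vertex counterpart of the central argument in~\cite{Mutze2011} --- is the most technical part of the proof.
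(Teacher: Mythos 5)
Your high-level plan --- induction on the game step, decomposing the new witness $H'$ into the piece $H_1'$ for $(H\setminus u_1,\pi\setminus u_1)$, the new $r$-set $R$ with its edges, and $r-1$ pieces coming from the alternative matchings, and then controlling overlaps via inclusion-exclusion --- is the same skeleton as the paper's proof. The core inequality $d(M_{j(c')})\leq d(M^*)\leq \lr(H,\pi)$ that you extract from Painter's choice rule is also correct and is used in the paper. However, there are two genuine gaps.

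First, the bound $v(H')\leq v_{\max}$ is \emph{not} a routine finite recursion, and the one-sentence justification you give is wrong. The children of $(H,\pi)$ in the inductive unrolling are $(H\setminus u_1,\pi\setminus u_1)$ and the graphs $(J_i\setminus w_i,\pi_i\setminus w_i)$; the latter are minimizers for the \emph{other} matchings and are in general \emph{not} subgraphs of $H$. They can have up to $v(F)-1$ vertices even when $v(H)$ is tiny, so vertex counts do not decrease monotonically along the recursion and the depth of the unrolling is not bounded by $v(F)$ or by the poset of ordered subgraphs. What the recursion does control monotonically is the $\lambda$-value, via \eqref{eq:lambda-J_1-geq-J_i}. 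The paper turns this into a quantitative bound by encoding the inductive history of $H'$ as a rooted tree $\mathcal{T}(H')$, coloring an arc red when the corresponding instance of \eqref{eq:lambda-J_1-geq-J_i} is strict (which drops $\mu(H')$ by at least $\epsilon/r$, where $\epsilon$ is the minimum nonzero gap between $\lambda$-values), bounding the number of red arcs under the assumption $\mu(H')\geq 0$, and then invoking the tie-breaking order on $\mathcal{G}'$ to show that any red-free directed path visits each ordered subgraph of $F$ at most once, hence has length at most $|\mathcal{S}(F)|$. That gives a constant bound on the depth of $\mathcal{T}(H')$ and hence on $v(H')$. This argument is the real crux of the lemma and is entirely missing from your proposal.

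Second, and related: you misidentify the role of the tie-breaking rule. You propose using it to run a ``swap argument'' that replaces the argmin graph $J(M_{j(c')})$ by one whose youngest vertex is $u_1$. No such swap exists or is needed, and the tie-breaking rule does not provide one. The paper lets $w_i$ be whatever the youngest vertex of $J_i=J(M_i)$ happens to be (one vertex of $R$, not necessarily $u_1$), and bounds the contribution $\lr(J_i\setminus w_i,\pi_i\setminus w_i)-\theta\deg_{J_i}(w_i)$ by $\lr(\hat J\setminus u_1,\pi|_{\hat J\setminus u_1})-\theta\deg_{\hat J}(u_1)$ using \eqref{eq:lambda-J_1-geq-J_i} together with Lemma~\ref{lem:lr-drop-minimization}. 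The $\mu$-accounting for $H'$ then goes through with $\deg_{J_i}(w_i)$ edges added from $w_i$, not from $u_1$. In your version, if the argmin copy for $M_{j(c')}$ does not even contain $u_1$, your piece (iii) has nothing to glue to $u_1$ and the accounting does not close. Dropping the insistence on $u_1$ and writing down the paper's inequality \eqref{eq:h1-circ-geq-hi-circ} is both simpler and correct; the tie-breaking machinery is reserved for the $v_{\max}$ bound described above.
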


We postpone the proof of Lemma~\ref{lem:lower-bound-main-lemma} and show first how it implies Theorem~\ref{thm:balanced-lbound}.

\begin{proof}[Proof of Theorem~\ref{thm:balanced-lbound}]
  Let \(\theta^* = \theta^*(F,r)\) be defined as in the theorem. We show that
  the above strategy for \(\theta := \theta^*\) allows Painter to win a.a.s.\
  for all \(p \ll \pov = n^{-\theta^*}\).

  By the definition of \(\theta^*\) (cf.\ \eqref{eq:Lambda-r-theta}
  and \eqref{eq:theta-star-def}) we have that for each possible ordering
  \(\pi\) of the vertices of \(F\) there exists some pair \((H,\pi|_H) \in
  \mathcal{S}(F)\) such that \(\lrs(H,\pi|_H) \leq 0\). According to
  Lemma~\ref{lem:lower-bound-main-lemma} the following holds for each such
  \((H,\pi|_H)\): If the final board contains a monochromatic copy of
  \((H,\pi|_H)\), then $G_{n/r}$ contains an \(r\)-matched graph \(K'\) of
  size at most \(v_{\max}\) and \(\mrs(K') < 0\), or an \(r\)-matched graph
  \(H'\), again of size at most \(v_{\max}\), satisfying
  \begin{equation*}
    \mrs(H') \leq \lrs(H,\pi|_{H})\leq 0\ .
  \end{equation*}
  This yields a family \(\mathcal{W} = \mathcal{W}(F,\pi,r)\) of \(r\)-matched
  graphs \(W'\) satisfying \(\mu(W') \leq 0\) and \(v(W') \leq v_{\max}\) such
  that, deterministically, $G_{n/r}$ contains a graph from \(\mathcal{W}\)
  if the final board contains a monochromatic copy of \((F,\pi)\) (and hence also a
  copy of \((H,\pi|_H)\)). It follows that $G_{n/r}$ contains a graph from
  \(\mathcal{W}^* = \mathcal{W}^*(F,r) := \cup_{\pi \in
    \Pi(E(F))}\mathcal{W}(F,\pi,r)\) if the final board contains a monochromatic
  copy of \(F\). Moreover, as no graph in \(\mathcal{W}^*\) has more than
  \(v_{\max}\) vertices, the size of \(\mathcal{W}^*\) is bounded by a
  constant depending only on \(F\) and \(r\). As $G_{n/r}$ is distributed as a random $r$-matched graph $\Grnp$, we obtain with
  Lemma~\ref{lem:r-matched-expected}, the definition of \(\mrs()\) in
  \eqref{eq:def-mu-r-theta}, and the fact that \(\mrs(W') \leq 0\) for all
  \(W'\in \mathcal{W}^*\), that for $p\ll n^{-\theta^*}$ the expected number of copies of graphs from
  \(\mathcal{W}^*\) in $G_{n/r}$ is of order
  \begin{equation*}
    \sum_{W'\in \mathcal{W}^*} n^{\kappa(W')}p^{e(W')}
    \ll \sum_{W'\in \mathcal{W}^*} n^{\mr(W')} \leq \abs{\mathcal{W}^*}\cdot n^0 = \Theta(1).
  \end{equation*}
  It follows from Markov's inequality that a.a.s.\ $G_{n/r}\cong \Grnp$ contains no
  $r$-matched graph from \(\mathcal{W}^*\). Consequently a.a.s.\ the final board contains no monochromatic copy of \(F\).
\end{proof}

For the proof of Lemma~\ref{lem:lower-bound-main-lemma} we require the
following technical lemma concerning the minimization in the definition of
\(\lr()\). The proof is straightforward and analogous to~\cite[Lemma 10]{Mutze2011}.

\begin{lemma}\label{lem:lr-drop-minimization}
  Let \(r \geq 2\) be an integer, \(0 \leq \theta \leq 2\) fixed, and let
  \(\mathcal{F}\) be a family of ordered graphs with the property that if some
  \((H,\pi)\), \(\pi = (u_1, \dotsc, u_h)\), is in \(\mathcal{F}\) then for
  every subgraph \(J \subseteq H\) with \(u_1 \in J\) also \((J, \pi|_J)\) is
  in \(\mathcal{F}\). Then for \(\lr()\) as defined in \eqref{eq:lambda-r} we
  have
  \begin{equation}
    \label{eq:argmin=argmin-u1}
    \argmin_{(H,\pi)\in \mathcal{F}} \lr(H,\pi) = \argmin_{(H,\pi)\in
      \mathcal{F}}\Bigl( \lr(H\setminus u_1, \pi\setminus u_1)- \theta\cdot \deg_H(u_1)\Bigr) \subseteq
    \mathcal{F},
  \end{equation}
  and all ordered graphs \((\hat J,\hat \pi)\), \(\hat\pi = (\hat u_1, \dotsc,
  \hat u_j)\), in the family \eqref{eq:argmin=argmin-u1} satisfy
  \begin{equation}\label{eq:min-nice-recursion}
    \lr(\hat J, \hat \pi) = 1 + r\cdot \bigl(\lr(\hat J\setminus \hat u_1, \hat \pi
    \setminus \hat u_1) - \theta\cdot\deg_{\hat J}(\hat u_1)\bigr).
  \end{equation}
  \qed
\end{lemma}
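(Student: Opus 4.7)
The plan is to unpack the recursion \eqref{eq:lambda-r}: for nonempty $(H,\pi)$ with $\pi=(u_1,\ldots,u_h)$, abbreviate
\[ A(H,\pi) := \lr(H\setminus u_1,\pi\setminus u_1) - \theta\cdot\deg_H(u_1), \qquad B(H,\pi) := \min_{\substack{J\subseteq H\\ u_1\in J}} A(J,\pi|_J), \]
so that $\lr(H,\pi) = 1 + A(H,\pi) + (r-1)\,B(H,\pi)$. Taking $J=H$ in the minimum defining $B$ shows $B(H,\pi)\leq A(H,\pi)$. The key observation driving the proof is that the closure hypothesis on $\mathcal{F}$ turns the internal minimization defining $B$ into a minimization over a subfamily of $\mathcal{F}$.

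Set $m := \min_{(H,\pi)\in\mathcal{F}} A(H,\pi)$ and fix any $(H,\pi)\in\mathcal{F}$. The closure property guarantees that every $(J,\pi|_J)$ with $J\subseteq H$ and $u_1\in J$ lies in $\mathcal{F}$, so each such $A(J,\pi|_J)\geq m$, yielding $B(H,\pi)\geq m$. Combining with $B(H,\pi)\leq A(H,\pi)$ gives
\[ \lr(H,\pi) = 1 + A(H,\pi) + (r-1)\,B(H,\pi) \geq 1 + m + (r-1)\,m = 1 + rm, \]
with equality precisely when $A(H,\pi)=B(H,\pi)=m$.

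From this chain both inclusions of the argmin equality follow cleanly. If $(\hat J,\hat\pi)\in\mathcal{F}$ minimizes $A$, then $m\leq B(\hat J,\hat\pi)\leq A(\hat J,\hat\pi)=m$, so the chain is tight and $\lr(\hat J,\hat\pi)=1+rm$, which is the global minimum of $\lr$ on $\mathcal{F}$; hence the $A$-argmin is contained in the $\lr$-argmin. Conversely, any $\lr$-minimizer must attain $\lr(H,\pi)=1+rm$, and since the coefficient on $A(H,\pi)$ in the chain is strictly positive, this forces $A(H,\pi)=m$ and therefore membership in the $A$-argmin. The inclusion of the common argmin in $\mathcal{F}$ is immediate from the definition. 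Finally, identity \eqref{eq:min-nice-recursion} falls out of the same tightness: for $(\hat J,\hat\pi)$ in the common argmin, $A(\hat J,\hat\pi)=B(\hat J,\hat\pi)=m$, so the recursion collapses to $\lr(\hat J,\hat\pi) = 1 + r\cdot A(\hat J,\hat\pi)$, which is exactly \eqref{eq:min-nice-recursion}.

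I do not expect any serious obstacle; the argument is essentially bookkeeping once one notices that the closure hypothesis on $\mathcal{F}$ is precisely what is needed to lower-bound the inner minimization $B(H,\pi)$ by the global minimum $m$ of $A$ on $\mathcal{F}$. Without this hypothesis the two argmins can easily differ, so the mild combinatorial condition imposed on $\mathcal{F}$ is sharp for the conclusion.
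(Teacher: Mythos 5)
Your proof is correct. Since the paper omits the argument (marking it \qed\ and referring to the analogous Lemma~10 in M\"utze et al.), there is nothing to compare in detail, but your unpacking of the recursion into $\lr = 1 + A + (r-1)B$, the observations $B\le A$ (take $J=H$) and $B\ge m$ (closure), and the tightness argument are exactly the intended ``straightforward'' route; the argmin identity and \eqref{eq:min-nice-recursion} both drop out of the equality case of the chain $1+rm \le \lr(H,\pi)$, as you say.
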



It remains to prove Lemma~\ref{lem:lower-bound-main-lemma}.

\begin{proof}[Proof of Lemma~\ref{lem:lower-bound-main-lemma}]
 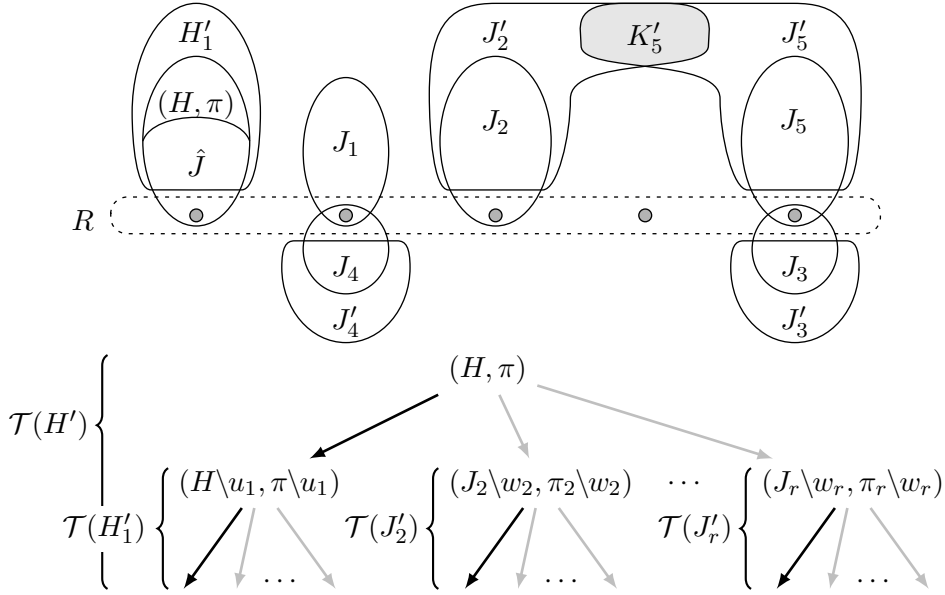
\begin{figure}
   \centering
\begin{tikzpicture}[y=0.80pt,x=0.80pt,yscale=-1, inner sep=0pt, outer sep=0pt]
  \definecolor{ce6e6e6}{RGB}{230,230,230}
  \definecolor{cb3b3b3}{RGB}{179,179,179}
  \path[fill=ce6e6e6] (370.4688,167.3622) .. controls (345.4688,167.3622) and
    (350.0000,179.8622) .. (350.0000,187.3622) .. controls (350.0000,193.6620) and
    (364.6748,194.2795) .. (380.0000,197.3622) .. controls (395.2196,194.2791) and
    (410.0000,193.6625) .. (410.0000,187.3622) .. controls (410.0000,179.8622) and
    (415.0000,167.3622) .. (390.0000,167.3622) -- cycle;
  \path[draw=black,dash pattern=on 1.00pt off 3.00pt,line join=bevel,line
    cap=rect,miter limit=4.00,line width=0.500pt,rounded corners=0.2167cm]
    (130.0000,258.7028) rectangle (490.0000,276.0012);
  \path[cm={{0.65997,0.0,0.0,0.65997,(27.33326,161.50509)}},draw=black,fill=cb3b3b3,line
    join=bevel,line cap=rect,miter limit=4.00,line width=0.500pt]
    (220.7183,160.3975)arc(-0.042:180.042:4.546)arc(-180.042:0.042:4.546) --
    cycle;
  \path[draw=black,line join=bevel,line cap=rect,miter limit=4.00,line
    width=0.500pt] (195.0000,232.3622) .. controls (195.0000,254.4536) and
    (183.8071,272.3622) .. (170.0000,272.3622) .. controls (156.1929,272.3622) and
    (145.0000,254.4536) .. (145.0000,232.3622) .. controls (145.0000,210.2708) and
    (156.1929,192.3622) .. (170.0000,192.3622) .. controls (183.8071,192.3622) and
    (195.0000,210.2708) .. (195.0000,232.3622) -- cycle;
  \path[draw=black,line join=bevel,line cap=butt,miter limit=4.00,line
    width=0.500pt] (145.0000,232.3622) .. controls (150.0000,217.3622) and
    (190.0000,217.3622) .. (195.0000,232.3622);
  \path[draw=black,line join=bevel,line cap=butt,miter limit=4.00,line
    width=0.500pt] (200.0000,218.8937) .. controls (200.0000,233.1238) and
    (198.0000,255.3622) .. (191.2132,255.3320) .. controls (185.0000,255.3043) and
    (178.2843,255.3622) .. (170.0000,255.3622) .. controls (161.7157,255.3622) and
    (155.0000,255.3044) .. (148.7868,255.3320) .. controls (142.0000,255.3622) and
    (140.0000,233.1238) .. (140.0000,218.8937) .. controls (140.0000,190.4336) and
    (153.4315,167.3622) .. (170.0000,167.3622) .. controls (186.5685,167.3622) and
    (200.0000,190.4336) .. (200.0000,218.8937) -- cycle;
  \path[fill=black] (170,242.36218) node[] (text4618) {$\hat{J}$};
  \path[fill=black] (170,215.36218) node[] (text4618-4) {$(H,\pi)$};
  \path[fill=black] (170,182.36218) node[] (text4618-4-4) {$H_1'$};
  \path[fill=black] (116.89404,269.76129) node[] (text4763) {$R$};
  \path[cm={{1.0,0.0,0.0,0.8,(275.0,50.47244)}},draw=black,line join=bevel,line
    cap=rect,miter limit=4.00,line width=0.500pt]
    (200.0000,227.3622)arc(0.000:180.000:25.000000 and
    50.000)arc(-180.000:0.000:25.000000 and 50.000) -- cycle;
  \path[fill=black] (449.54568,222.90787) node[] (text4618-4-9-0)
    {$J_5$};
  \path[fill=black] (450,182.36218) node[] (text4618-4-4-0-9) {$J_5'$};
  \path[cm={{1.0,0.0,0.0,0.7,(65.0,78.20867)}},draw=black,line join=bevel,line
    cap=rect,miter limit=4.00,line width=0.500pt]
    (195.0000,227.3622)arc(0.000:180.000:20.000000 and
    50.000)arc(-180.000:0.000:20.000000 and 50.000) -- cycle;
  \path[fill=black] (240,233.36218) node[] (text4618-4-9-5) {$J_1$};
  \path[cm={{0.65997,0.0,0.0,0.65997,(97.33326,161.50509)}},draw=black,fill=cb3b3b3,line
    join=bevel,line cap=rect,miter limit=4.00,line width=0.5pt]
    (220.7183,160.3975)arc(-0.042:180.042:4.546)arc(-180.042:0.042:4.546) --
    cycle;
  \path[cm={{1.0,0.0,0.0,0.8,(135.0,50.47244)}},draw=black,line join=bevel,line
    cap=rect,miter limit=4.00,line width=0.5pt]
    (201.0000,227.3622)arc(0.000:180.000:26.000000 and
    50.000)arc(-180.000:0.000:26.000000 and 50.000) -- cycle;
  \path[fill=black] (310,223.36218) node[] (text4618-4-9) {$J_2$};
  \path[fill=black] (310,182.36218) node[] (text4618-4-4-0) {$J_2'$};
  \path[cm={{1.0,0.0,0.0,0.42,(65.0,187.87006)}},draw=black,line join=bevel,line
    cap=rect,miter limit=4.00,line width=0.5pt]
    (195.0000,227.3622)arc(0.000:180.000:20.000000 and
    50.000)arc(-180.000:0.000:20.000000 and 50.000) -- cycle;
  \path[draw=black,line join=bevel,line cap=butt,miter limit=4.00,line
    width=0.500pt] (270.0000,292.0190) .. controls (270.0000,311.5385) and
    (256.5685,327.3622) .. (240.0000,327.3622) .. controls (223.4315,327.3622) and
    (210.0000,311.5385) .. (210.0000,292.0190) .. controls (210.0000,282.2593) and
    (212.0000,279.3622) .. (216.0000,279.3622) .. controls (220.0000,279.3622) and
    (231.7157,279.3622) .. (240.0000,279.3622) .. controls (248.2843,279.3622) and
    (260.0000,279.3622) .. (264.0000,279.3622) .. controls (268.0000,279.3622) and
    (270.0000,282.2593) .. (270.0000,292.0190) -- cycle;
  \path[fill=black] (240,292.36218) node[] (text4618-4-9-6) {$J_4$};
  \path[fill=black] (240,317.36218) node[] (text4618-4-4-0-6) {$J_4'$};
  \path[draw=black,line join=bevel,line cap=butt,miter limit=4.00,line
    width=0.500pt] (279.0000,210.3622) .. controls (279.0000,228.3114) and
    (281.0000,255.3622) .. (288.0000,255.3622) .. controls (293.0000,255.3622) and
    (300.4605,255.3622) .. (310.0000,255.3622) .. controls (319.5395,255.3622) and
    (327.3452,255.3622) .. (332.0000,255.3622) .. controls (339.0000,255.3622) and
    (345.0000,230.3114) .. (345.0000,212.3622) .. controls (345.0000,192.3622) and
    (409.5457,200.6829) .. (409.5457,187.3622) .. controls (409.5457,179.8622) and
    (414.5457,167.3622) .. (389.5457,167.3622) .. controls (365.6465,167.3622) and
    (314.7698,167.3622) .. (310.0000,167.3622) .. controls (290.9209,167.3622) and
    (279.0000,174.4637) .. (279.0000,210.3622) -- cycle;
  \path[fill=black] (380,183.36218) node[] (text5009) {$K_5'$};
  \path[cm={{0.65997,0.0,0.0,0.65997,(167.33326,161.50509)}},draw=black,fill=cb3b3b3,line
    join=bevel,line cap=rect,miter limit=4.00,line width=0.5pt]
    (220.7183,160.3975)arc(-0.042:180.042:4.546)arc(-180.042:0.042:4.546) --
    cycle;
  \path[cm={{0.65997,0.0,0.0,0.65997,(307.33326,161.50509)}},draw=black,fill=cb3b3b3,line
    join=bevel,line cap=rect,miter limit=4.00,line width=0.5pt]
    (220.7183,160.3975)arc(-0.042:180.042:4.546)arc(-180.042:0.042:4.546) --
    cycle;
  \path[cm={{0.65997,0.0,0.0,0.65997,(237.33326,161.50509)}},draw=black,fill=cb3b3b3,line
    join=bevel,line cap=rect,miter limit=4.00,line width=0.5pt]
    (220.7183,160.3975)arc(-0.042:180.042:4.546)arc(-180.042:0.042:4.546) --
    cycle;
  \path[cm={{1.0,0.0,0.0,0.42,(275.0,187.87006)}},draw=black,line join=bevel,line
    cap=rect,miter limit=4.00,line width=0.5pt]
    (195.0000,227.3622)arc(0.000:180.000:20.000000 and
    50.000)arc(-180.000:0.000:20.000000 and 50.000) -- cycle;
  \path[fill=black] (450,292.36218) node[] (text4618-4-9-6-1) {$J_3$};
  \path[fill=black] (450,317.36218) node[] (text4618-4-4-0-6-3)
    {$J_3'$};
  \path[draw=black,line join=bevel,line cap=butt,miter limit=4.00,line
    width=0.500pt] (481.0000,210.3622) .. controls (481.0000,228.3114) and
    (479.0000,255.3622) .. (472.0000,255.3622) .. controls (467.0000,255.3622) and
    (459.5395,255.3622) .. (450.0000,255.3622) .. controls (440.4605,255.3622) and
    (432.6548,255.3622) .. (428.0000,255.3622) .. controls (421.0000,255.3622) and
    (415.0000,230.3114) .. (415.0000,212.3622) .. controls (415.0000,192.3622) and
    (350.0000,200.6829) .. (350.0000,187.3622) .. controls (350.0000,179.3622) and
    (345.0000,167.3622) .. (370.0000,167.3622) .. controls (393.8992,167.3622) and
    (444.7759,167.3622) .. (449.5457,167.3622) .. controls (468.6247,167.3622) and
    (481.0000,174.4637) .. (481.0000,210.3622) -- cycle;
  \path[draw=black,line join=bevel,line cap=butt,miter limit=4.00,line
    width=0.500pt] (480.0000,292.0190) .. controls (480.0000,311.5385) and
    (466.5685,327.3622) .. (450.0000,327.3622) .. controls (433.4315,327.3622) and
    (420.0000,311.5385) .. (420.0000,292.0190) .. controls (420.0000,282.2593) and
    (422.0000,279.3622) .. (426.0000,279.3622) .. controls (430.0000,279.3622) and
    (441.7157,279.3622) .. (450.0000,279.3622) .. controls (458.2843,279.3622) and
    (470.0000,279.3622) .. (474.0000,279.3622) .. controls (478.0000,279.3622) and
    (480.0000,282.2593) .. (480.0000,292.0190) -- cycle;
\end{tikzpicture}\\
\usetikzlibrary{arrows,shapes,decorations.pathreplacing}
\begin{tikzpicture}[a/.style={-latex,line width=1pt},
	b/.style={thick,decorate,decoration={brace, amplitude=4pt}}]
	\newcommand{\sm}{\hspace{-3pt}\setminus\hspace{-3pt}}
	\node[] (hpi) at (3,1.5) {$(H,\pi)$};
	\node[] (hm) at (0,0) {$(H\sm u_1,\pi\sm u_1)$};
	\node[] (j2) at (3.7,0) {$(J_2\sm w_2,\pi_2\sm w_2)$};
	\node[] (jr) at (7.8,0) {$(J_r\sm w_r,\pi_r\sm w_r)$};
	\path (j2) -- (jr) node[pos=0.4] {$\cdots$};
	\draw[a] (hpi) -- (hm);
	\draw[a,gray!50] (hpi) -- (j2);
	\draw[a,gray!50] (hpi) -- (jr);
	\foreach \x in {hm,j2,jr} {
		\draw[-latex,line width=1pt] (\x) -- +(-1,-1.4);
		\draw[-latex,line width=1pt,gray!50] (\x) -- +(-0.3,-1.4);
		\draw[-latex,line width=1pt,gray!50] (\x) -- +(1,-1.4);
		\draw (\x) +(0.3,-1.3) node {$\cdots$};
	}
	\draw[b,decoration={aspect=0.7}] (hpi) +(-5,-2.9) -- +(-5,0.2)  node[pos=0.7,left=3pt] {$\mathcal{T}(H')$};
	\draw[b] (hm.west) +(0,-1.4) -- +(0,0.2)  node[pos=0.5,left=5pt,fill=white,inner sep=3pt] {$\mathcal{T}(H_1')$};
	\draw[b] (j2.west) +(0,-1.4) -- +(0,0.2)  node[pos=0.5,left=3pt] {$\mathcal{T}(J_2')$};
	\draw[b] (jr.west) +(0,-1.4) -- +(0,0.2)  node[pos=0.5,left=3pt] {$\mathcal{T}(J_r')$};
\end{tikzpicture}
   \caption{Notations used in the proof of
     Lemma~\ref{lem:lower-bound-main-lemma}. The arcs of \(\mathcal{T}(H')\)
     drawn grey are either grey or red in the proof.}
   \label{fig:main-lemma-figure}
 \end{figure}
  To simplify the notation we drop the subscripts from \(\lr\) and
  \(\mr\) and write \(\lambda\) and \(\mu\) instead.
  \renewcommand{\lr}{\lambda}\renewcommand{\mr}{\mu}%
  For the reader's convenience, Figure~\ref{fig:main-lemma-figure} illustrates
  the notations used throughout the proof.
  Let
  \begin{equation}
    \label{eq:epsilon-lambda}
    \begin{split}
      \epsilon = \epsilon(F,r,\theta) = \min\Bigl\{\abs{\lr(H_1,\pi_1) - \lr(H_2, \pi_2)}
      \;\mbox{\Large\(\mid\)}\; &(H_1,\pi_1), (H_2,\pi_2)\in \mathcal{S}(F) \\
      &\wedge \lr(H_1,\pi_1) \neq \lr(H_2, \pi_2) \Bigr\}
    \end{split}
  \end{equation}
  and
  \begin{equation}
    \label{eq:def-vmax}
    v_{\max} = v_{\max}(F,r,\theta) = r^{(v(F)r/\epsilon +
      1)\abs{\mathcal{S}(F)} + 2}\cdot v(F) + r
  \end{equation}

  We prove the lemma by induction on the number of steps in the game. We show
  that the statement about graphs \((H,\pi)\in \mathcal{S}(F)\) is true as
  long as the currently revealed graph $G_i$ does not contain an \(r\)-matched subgraph \(K'\) with
  \(v(K') \leq v_{\max}\) and \(\mr(K') < 0\). Once such a subgraph \(K'\)
  appears we are done as it will remain in the game to the end.

  After the first step Painter has assigned a color to \(r\) vertices and the
  inequality~(\ref{eq:invariant-inequality}) is trivially satisfied: In each color we only have a single vertex, which has a $\lambda$-value of $1$ according to~\eqref{eq:lambda-r}. Each of these vertices is contained in the $r$-matched graph induced by the first $r$-set, whose $\mu$-value is at most $1$, see~\eqref{eq:def-mu-r-theta}.

  Consider now an arbitrary step of the game, and denote with \(M_1, \dotsc,
  M_r\) the matchings Painter considered in this step, where w.l.o.g.\ \(M_1\) is the matching Painter chose. Assume that $M_1$ completed a monochromatic copy of \((H\setminus u_1,\pi\setminus u_1)\) to a copy of $(H,\pi)$ (where $u_1$ denotes the first vertex of $\pi$).  Let $\hat J$ be some graph in
$\argmin_{J\subseteq H, u_1 \in J} \lr(J, \pi|_{J})$, and note that $M_1$ also closed a copy of $\hat J$.
  For $1\leq i\leq r$, let $(J_i,\pi_i):=J(M_i)$ as in the definition of our strategy after \eqref{def:mathJ-M}. By definition \((J_1, \pi_1)\) minimizes \(\lr()\)
  over all monochromatic ordered graphs in \(\mathcal{S}(F)\) that are closed by \(M_1\), see \eqref{eq:weights}. Furthermore, since Painter preferred $M_1$ over the alternatives we have $\lr( J_1,\pi_1 ) \geq \lr(J_i,\pi_i)$, $2\leq i\leq r$.
  Taken together it follows that
  \begin{equation}
    \label{eq:lambda-J_1-geq-J_i}
   \lr(\hat J, \pi|_{\hat J}) \geq \lr( J_1,\pi_1 ) \geq \lr(J_i,\pi_i)\quad\text{for \(2\leq i\leq r\)}.
  \end{equation}
  Note that \(H\), \(\hat J\) or \(J_1\) might be the same graph.

  For $1\leq i\leq r$, let $w_i$ denote the youngest vertex of $J_i$ according to $\pi_i$; i.e., $\pi_i=(w_i, \ldots)$.
  Again by the definition of our strategy the graphs \((J_i, \pi_i)\) minimize \(\lr()\) among all graphs that are closed by $M_i$, \(1 \leq i \leq
  r\). As for each index $i$ the family of these graphs is subgraph-closed in the sense of
  Lemma~\ref{lem:lr-drop-minimization}, it follows that
  \begin{equation*}
    \lr(J_i,\pi_i) = 1 + r\Bigl(\lr(J_i\setminus w_i, \pi_i \setminus w_i) -
    \theta\cdot \deg_{J_i}(w_i)\Bigr).
  \end{equation*}
  Similarly, Lemma~\ref{lem:lr-drop-minimization} also yields that
  \begin{equation*}
    \lr(\Jhat,\pi|_{\Jhat}) = 1 + r\Bigl(\lr(\Jhat\setminus u_1, \pi|_{\Jhat\setminus u_1}) -
    \theta\cdot\deg_{\Jhat}(u_1)\Bigr).
 \end{equation*}

Applying these transformations to equation~(\ref{eq:lambda-J_1-geq-J_i}) yields that for \(1 \leq i \leq
  r\)
  \begin{equation}
    \label{eq:h1-circ-geq-hi-circ}
    \lr(\hat J \setminus u_1, \pi|_{\hat J\setminus u_1}) - \theta\cdot\deg_{\hat J}(u_1) \geq
    \lr(J_i\setminus w_i, \pi_i \setminus w_i) - \theta\cdot \deg_{J_i}(w_i).
  \end{equation}

  The copy of \((H \setminus u_1,\pi\setminus u_1)\) on the board is monochromatic and by
  induction must be contained in some \(r\)-matched graph \(H_1'\) satisfying
  equation~\eqref{eq:invariant-inequality}, i.e.
  \begin{equation}
    \label{eq:induction-mu-lambda-h1prime}
    \mr(H_1') \leq \lr(H\setminus u_1,\pi\setminus u_1)\ .
  \end{equation}
 Similarly, the copies of \((J_i \setminus w_i, \pi_i \setminus
  w_i)\) that are completed to copies of $(J_i,\pi_i)$, \(2 \leq i \leq r\) on the board are also monochromatic, and hence they are contained in \(r\)-matched graphs \(J_2',\dotsc,
  J_r'\) with
  \begin{equation}
    \label{eq:induction-mu-lambda}
    \mr(J_i') \leq \lr(J_i\setminus w_i, \pi_i \setminus w_i) \quad \text{for \(2 \leq i \leq r\)}.
  \end{equation}
  By induction all these graphs contain at most \(v_{\max}\) vertices. We can
  also assume that \(\mr(H_1')\) and \(\mr(J_2'), \ldots, \mr(J_r')\) are all non-negative, as
  otherwise we have found a graph \(K'\) with \(\mr(K') < 0\) and \(v(K') \leq
  v_{\max}\) and are done. We will argue later that if the $\mu$-values under consideration are indeed non-negative, even stronger bounds on the number of vertices hold; specifically,
  that
  \begin{equation}
    \label{eq:j_i-H-vmax-bound}
    \begin{split}
      v(H'_1) &< v_{\max}/r - 1\\
      v(J_i') &< v_{\max}/r - 1 \quad\text{for \(2\leq i \leq r\)}.
    \end{split}
  \end{equation}

  We now construct an \(r\)-matched graph \(H'\) for \((H,\pi)\) satisfying
  the conditions of the lemma. Denote with \(E_1\) a set of edges that completes the considered copy of $(H\setminus u_1,\pi\setminus u_1)$ to a copy of $(H,\pi)$ (where the vertex corresponding to $u_1$ is in $R$ and $|E_1|=\deg_H(u_1)$). Similarly, for \(2 \leq i \leq r\) denote with \(E_i\) a set of edges that completes the considered copy of $(J_i\setminus w_i,\pi_i\setminus w_i)$ to a copy of $(J_i,\pi_i)$ (where the vertex corresponding to $w_i$ is in $R$ and $|E_i|=\deg_{J_i}(w_i)$).

 Let \(H'\) be the \(r\)-matched graph obtained by the union of
  \(H_1'\), \(J_i'\) and \(R\) together with all the edges in \(E_i\), \(1\leq
  i \leq r\). Formally, we set
  \begin{equation*}
    \begin{split}
      V(H') &:= R \cup V(H_1') \cup \bigcup_{i=2}^r V(J_i')\\
      E(H') &:= E(H_1') \cup \bigcup_{i=2}^r E(J_i') \cup \bigcup_{i=1}^r
      E_i\\
      \mathcal{K}(H') &:= \{R\} \cup \mathcal{K}(H_1') \cup \bigcup_{i=2}^r
      \mathcal{K}(J_i')\ .
    \end{split}
  \end{equation*}
  This is again a well-defined \(r\)-matched graph: All \(r\)-sets in
  \(H_1'\), \(J_i'\) ($2\leq i\leq r$) and \(\{R\}\) are also \(r\)-sets of the current game
  board. As such they are either equal or disjoint. Further $V(H')$ is indeed the union of all $r$-sets in $\mathcal{K}(H')$, and contains the endpoints of all edges in $E(H')$.
	
  Note that by
  \eqref{eq:j_i-H-vmax-bound} it follows that \(v(H') < v_{\max}\).

  The $r$-matched graphs $H_1', J_2', \ldots, J_r'$ are all formed by $r$-sets that appeared before $R$ in the process and are therefore vertex-disjoint from $R$. In particular, they do not contain any edges from $E_1, \ldots, E_r$. Furthermore, the sets $E_1,\ldots, E_r$ are pairwise disjoint: if two such sets $E_{i_1}$, $E_{i_2}$ involve the same vertex from $R$, then together with this vertex they complete monochromatic copies of graphs $(J_i\setminus w_i,\pi_i\setminus w_i)$ in two \emph{different} colors to copies of $(J_i, \pi_i)$; i.e., the endpoints of the edges in $E_{i_1}$, $E_{i_2}$ outside $R$ are in two different colors and are therefore distinct.

We define the $r$-matched graphs
  \begin{equation*}
    K_i' = J_i' \cap \Bigl(H_1' \cup \bigcup_{j=2}^{i-1}J_j'\Bigr) \quad\text{for
      \(2\leq i \leq r\)}.
  \end{equation*}
With the above observations and the definition of \(\mr()\) in \eqref{eq:def-mu-r-theta} we obtain that
  \begin{equation}
    \label{eq:mu-H'-1}
    \begin{split}
      \mr(H') &= 1 + \mr(H_1') - \theta\cdot\deg_{H}(u_1) \\&\hphantom{=}\,+
      \sum_{i = 2}^{r}\bigl(\mr(J_i') - \theta\cdot\deg_{J_i}(w_i)\bigr) - \sum_{i = 2}^{r} \mr(K_{i}').
    \end{split}
  \end{equation}
  We can assume that all \(\mr(K_{i}')\) are non-negative, because if this is
  not the case we have found a graph \(K'\) with \(\mr(K') < 0\) and \(v(K')
  \leq v_{\max{}}\) and are done.
  With this observation and \eqref{eq:induction-mu-lambda-h1prime}, (\ref{eq:induction-mu-lambda}) we
  obtain that
  \begin{equation*}
    \begin{split}
      \mr(H') &\leq 1 + \lr(H\setminus u_1, \pi \setminus u_1) -
      \theta\cdot\deg_{H}(u_1) \\&\hphantom{\leq}\,+ \sum_{i =
        2}^{r}\bigl(\lr(J_i\setminus w_i, \pi_i \setminus w_i)
      - \theta\cdot\deg_{J_i}(w_i)\bigr).
    \end{split}
  \end{equation*}
  Combining this with equation~\eqref{eq:h1-circ-geq-hi-circ} yields
  \begin{equation}
    \label{eq:mu-H'-3}
    \begin{split}
      \mr(H') &\leq 1 + \lr(H\setminus u_1, \pi \setminus u_1) -
      \theta\cdot\deg_{H}(u_1) \\&\hphantom{\leq}\,+ (r-1)\cdot\Bigl(\lr(\hat J\setminus u_1,
      \pi|_{\hat J\setminus u_1}) - \theta\cdot\deg_{\hat J}(u_1)\Bigr).
    \end{split}
  \end{equation}
  By Lemma~\ref{lem:lr-drop-minimization} and our choice of \(\hat J\) the right hand side of the above equation equals
  \(\lr(H,\pi)\) as defined in~\eqref{eq:lambda-r}; i.e., we have
  \begin{equation*}
     \mr(H') \leq \lr(H,\pi)
  \end{equation*}
  as desired.

  It remains to prove that equation~(\ref{eq:j_i-H-vmax-bound}) holds. It
  suffices to show that given \(\mu(H') \geq 0\) we have \(v(H') \leq
  v_{\max}/r-1\).

  In the above argument we constructed \(H'\) from copies of \(H_1'\) and
  \(J_i'\), or in other words from graphs constructed equivalently to \(H'\)
  in prior steps of the induction from \((H\setminus u_1, \pi\setminus u_1)\)
  and \((J_i\setminus w_i, \pi_i \setminus w_i)\). To analyze this
  construction we associate it with an edge-colored directed rooted tree
  \(\mathcal{T}(H')\) (cf.\ Figure~\ref{fig:main-lemma-figure}). The vertices of \(\mathcal{T}(H')\) correspond to
  monochromatic copies of graphs from \(\mathcal{S}(F)\) on the board of the game (the same copy may appear as a vertex multiple times). If \((H,\pi)\) consists of a single
  vertex, then \(\mathcal{T}(H')\) consists just of the copy of \((H,\pi)\) as
  the root. If this is not the case, then \(\mathcal{T}(H')\) consists of
  the copy of \((H, \pi)\) as the root vertex joined to \(r\) subtrees
  \(\mathcal{T}(H_1')\) and \(\mathcal{T}(J_i')\), \(2 \leq i \leq
  r\). The subtree \(\mathcal{T}(H_1')\) is connected to the root by a black arc and every
  \(\mathcal{T}(J_i')\) is connected to the root by either a grey or red arc
  according to the following criterion: Each such arc corresponds to an
  instance of the inequalities in (\ref{eq:lambda-J_1-geq-J_i}) somewhere
  along the induction. The arc is grey if both inequalities are tight, i.e.,
  if \(\lr(\hat J,\pi|_{\hat J}) = \lr(J_i, \pi_i)\). If on the other hand at
  least one of the inequalities is strict, i.e., if \(\lr(\hat J, \pi|_{\hat
    J}) > \lr(J_i, \pi_i)\), then the arc is red. All arcs are oriented away
  from the root. Note that \(\mathcal{T}(H')\) captures only the logical
  structure of the inductive history of \(H'\). Overlappings (captured by the
  graphs \(K_{i}'\) in \eqref{eq:mu-H'-1}) are completely ignored.

  Every red arc of \(\mathcal{T}(H')\) corresponds to a strict inequality in
  \eqref{eq:lambda-J_1-geq-J_i}. In this case, as a consequence of
  Lemma~\ref{lem:lr-drop-minimization},
  equation~\eqref{eq:h1-circ-geq-hi-circ} is also strict, with a difference of
  at least \(\epsilon/r\) (cf.~\eqref{eq:epsilon-lambda}) between the right
  and left side. Consequently, each red arc contributes a term of
  \(-\epsilon/r\) to the right side of \eqref{eq:mu-H'-3} in the corresponding
  induction step. Accumulating these terms along the induction yields that
  \begin{equation}
    \label{eq:mr=lr-eps}
    \mr(H') \leq \lr(H,\pi) - \ell(H')\cdot\epsilon/r,
  \end{equation}
  where \(\ell(H')\) denotes the number of red arcs in \(\mathcal{T}(H')\).

  Note that \(\lr(H,\pi) \leq v(F)\) for all \((H,\pi) \in
  \mathcal{S}(F)\). Thus if \(\mu(H') \geq 0\), then by \eqref{eq:mr=lr-eps}
  the tree \(\mathcal{T}(H')\) has at most \(\lr(H,\pi)r/\epsilon \leq
  v(F)r/\epsilon\) many red arcs. We will show that, due to our tie-breaking
  rule involving the auxiliary graph \(\mathcal{G}'\), this bound on the
  number of red arcs implies the claimed bound of \(v_{\max}/r-1\) on the
  number of vertices of \(H'\). To that end, we first show that if two
  vertices of \(\mathcal{T}(H')\) are connected by a (directed, i.e.\
  descending) path \(P\) that contains no red arcs, then these two vertices
  correspond to copies of \emph{different} ordered graphs \((H_1,\pi_1)\),
  \((H_2,\pi_2) \in \mathcal{S}(F)\).

  Consider such a walk between two vertices \((H_1,\pi_1)\) and
  \((H_2,\pi_2)\). We can map \(P\) to a directed walk \(P'\) in
  \(\mathcal{G}'\) as follows. The initial vertex of \(P'\) is
  \((H_1,\pi_1)\). For each black arc in \(P\) from a copy of some \((H,\pi)
  \in \mathcal{S}(F)\) to a copy of \((H\setminus u_1, \pi\setminus u_1)\) we
  extend \(P'\) by an arc from \((H,\pi)\) to \((H\setminus u_1, \pi\setminus
  u_1)\). This arc exists in \(\mathcal{G}'\) by subgraph containment. For each
  grey arc in \(P\) from a copy of some graph \((H,\pi)\) to a copy of some graph \((J_i\setminus
  w_i, \pi_i \setminus w_i)\) for some \(2 \leq i\leq r\), we have
  \begin{equation}
    \label{eq:walk-lr-equalities}
    \lr(H,\pi) \geq \lr(\hat J, \pi|_{\hat J}) = \lr( J_1,\pi_1 ) = \lr(J_i,\pi_i).
  \end{equation}
  In \(\mathcal{G}'\) we can then walk between the first two graphs in the
  above equation (assuming that they are different) because the second is
  contained in the first. Further we can walk from the second to the third
  because \((J_1,\pi_1) = J(M_1)\), and therefore by definition it must be lower in the ordering
  than \((\hat J, \pi|_{\hat J})\), see the text just after \eqref{def:mathJ-M}. The walk between
  the last two graphs in (\ref{eq:walk-lr-equalities}) is possible because
  Painter chose the matching \(M_1\), and by our tie-breaking criterion this
  means that \((J_1,\pi_1) = J(M_1)\) is higher in the ordering than
  \((J_i,\pi_i) = J(M_i)\). The last arc between \((J_i,\pi_i)\) and
  \((J_i\setminus w_i,\pi_i\setminus w_i)\) is in \(\mathcal{G}\) by subgraph
  containment. We extend \(P'\) by all these arcs as well (if any two
  subsequent graphs in this walk are the same, then the corresponding step in
  the walk is skipped).  Proceeding in this manner we obtain a
  directed walk \(P'\) in \(\mathcal{G}'\) from \((H_1,\pi_1)\) to
  \((H_2,\pi_2)\). As \(\mathcal{G}'\) is acyclic we must have \((H_1,\pi_1)
  \neq (H_2, \pi_2)\).

  It follows that a (directed) path in \(\mathcal{T}(H')\) that contains no
  red arcs has at most \(\abs{\mathcal{S}(F)}\) many vertices. Since in total
  we have at most \(v(F)r/\epsilon\) many red arcs in \(\mathcal{T}(H')\), it
  follows that the depth of \(\mathcal{T}(H')\) is bounded by
  \begin{equation*}
    (v(F)r/\epsilon + 1)\abs{\mathcal{S}(F)}\ ,
  \end{equation*}
  and that consequently
  \begin{equation*}
    v\bigl(\mathcal{T}(H')\bigr) \leq 1 + r + r^2 + \dotsb +
    r^{(v(F)r/\epsilon + 1)\abs{\mathcal{S}(F)}} \leq r^{(v(F)r/\epsilon + 1)\abs{\mathcal{S}(F)} + 1}.
  \end{equation*}
  Since each vertex of \(\mathcal{T}(H')\) corresponds to at most \(v(F)\)
  vertices of \(H'\) we finally obtain that
  \begin{equation*}
    \begin{split}
      v(H') &\leq r^{(v(F)r/\epsilon + 1)\abs{\mathcal{S}(F)} + 1}\cdot v(F)
      \stackrel{~\eqref{eq:def-vmax}}{=} v_{\max}/r - 1.
    \end{split}
  \end{equation*}
\end{proof}

\begin{remark} The reader might wonder where exactly an attempt to extend the \emph{edge} Achlioptas lower bound proof in the same way fails. The issue arises with the definition of the graphs $K'_i$ that capture possible overlaps of the $r$-matched graphs $H_1', J_2', \ldots, J_r'$. In the edge case it is not possible to define these in such a way that the analogue of~\eqref{eq:mu-H'-1} holds.

\end{remark}

\section{The edge case}
\label{sec:edge-case}

In this section we prove Theorem~\ref{thm:1}, our separation result for the edge case.

As already mentioned, it is not hard to see that the Achlioptas game and the balanced Ramsey game have different
thresholds for certain forests. The simplest example is the case where $F$ is
the star with three rays and $r=2$: By the pigeon-hole principle, in the
balanced Ramsey game the player will lose the game as soon as the board
contains a star with five rays, which by a standard result a.a.s.\ happens
after $\Theta(n^{2-6/5})=\Theta(n^{4/5})$ many steps (see e.g.\ \cite[Section 3.1]{purple-book}). Thus the threshold of the
balanced Ramsey game is bounded from above by $n^{4/5}$.  In the Achlioptas
game with the same parameters on the other hand, stars on 5 edges are not an
issue, as typically the player can simply choose not to pick more than 2 edges
out of each such star. Specifically, the results of~\cite{Mutze2011}
yield a strictly higher threshold of \(n^{6/7}\) for the Achlioptas game.

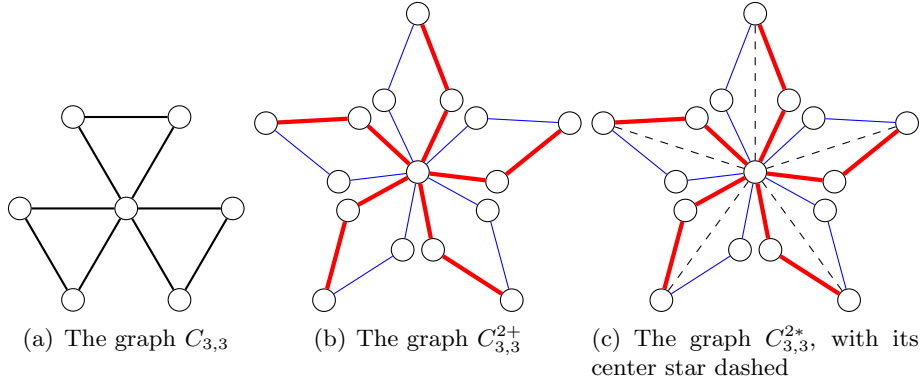
\begin{figure}
  \begin{center}
    \newcommand{\figurescale}{0.7}
    \subfigure[The graph \(C_{3,3}\)]{%
      \begin{tikzpicture}[scale=\figurescale,foo/.style={draw,shape=circle,inner sep=3pt}]
        \node[foo] (n0) at (0,0) {};
        \foreach \a in {90,210,330} {
          \node[foo] (n\a2) at (\a+30:2) {};
          \node[foo] (n\a3) at (\a-30:2) {};
          \path[draw,thick] (n0) -- (n\a2) -- (n\a3) -- (n0);
        };
      \end{tikzpicture}%
      \label{fig:ex:f}%
    }
    \subfigure[The graph \(C_{3,3}^{2+}\)]{%
      \begin{tikzpicture}[scale=\figurescale,foo/.style={draw,shape=circle,inner sep=3pt}]
        \node[foo] (n0) at (0,0) {};
        \foreach \a in {18,90,...,377} {
          \node[foo] (n\a2) at (\a-25:1.5) {};
          \node[foo] (n\a3) at (\a+25:1.5) {};
          \node[foo] (n\a4) at (\a:3) {};
          \path[draw,red,ultra thick] (n0) -- (n\a2) -- (n\a4);
          \path[draw,blue] (n\a4) -- (n\a3) -- (n0);
        };
      \end{tikzpicture}%
      \label{fig:ex:fstar}%
    }
    \subfigure[The graph \(C_{3,3}^{2*}\), with its center star dashed]{%
      \begin{tikzpicture}[scale=\figurescale,foo/.style={draw,shape=circle,inner sep=3pt}]
        \node[foo] (n0) at (0,0) {};
        \foreach \a in {18,90,...,377} {
          \node[foo] (n\a2) at (\a-25:1.5) {};
          \node[foo] (n\a3) at (\a+25:1.5) {};
          \node[foo] (n\a4) at (\a:3) {};
          \path[draw,red,ultra thick] (n0) -- (n\a2) -- (n\a4);
          \path[draw,blue] (n\a4) -- (n\a3) -- (n0);
          \path[draw,dashed] (n\a4) -- (n0);
        };
      \end{tikzpicture}%
      \label{fig:ex:fstarcolor}%
    }
  \end{center}
  \caption{A graph with different thresholds for the Achlioptas and the
    balanced Ramsey game.}
  \label{fig:ex}
\end{figure}

As it turns out, similar pigeon-hole problems as in the star example may arise for more complex graphs
as well. The simplest such example is given by the graph \(C_{3,3}\)
consisting of 3 triangles joined at one vertex, see Figure~\ref{fig:ex:f}. The
results of~\cite{Mutze2011} yield a threshold of \(n^{2-22/35} =
n^{1.371\dots}\) for the Achlioptas game with this graph and $r=2$. As we will see, the threshold of the corresponding balanced Ramsey game is at most \(n^{1.36}\). 
The reason is that, regardless of the strategy Painter uses, many copies of the graph  \(C_{3,3}^{2+}\) colored exactly as in Figure~\ref{fig:ex:fstar} will appear relatively early in the game. Once all 5 edges drawn dashed in
Figure~\ref{fig:ex:fstarcolor} have appeared in such a copy, by the pigeon-hole principle Painter will have created a monochromatic copy of $F=C_{3,3}$. As $C_{3,3}^{2*}$ has 16 vertices and 25 edges, the upper bound resulting from this argument is $n^{2-16/25} = n^{1.36}$.

We will show that this argument generalizes to any graph $F$ formed by some number of cycles of the same length joined at a common vertex, and to any number $r\geq 2$ of colors.

\begin{definition}
  Let \clk{} denote the graph obtained by joining \(k\) cycles of length
  \(\ell\) at one common vertex.
\end{definition}
We will prove:
\begin{theorem}\label{thm:separation}
  For all integers \(\ell\geq 3\), \(k \geq 3\), and $r\geq 2$, the threshold of the
  the balanced Ramsey game with parameters $C_{\ell,k}$ and $r$ is strictly lower than the threshold of the Achlioptas game with the same parameters.
\end{theorem}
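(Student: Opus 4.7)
The approach is to generalize the construction in Figures~\ref{fig:ex:fstar}--\ref{fig:ex:fstarcolor}. I would define \clkp\ as the graph consisting of $r(k-1)+1$ ``petals'' joined at a common center vertex $c$, where each petal is a theta graph of $r$ internally vertex-disjoint paths of length $\ell-1$ from $c$ to a distinct ``outer'' vertex, and let \clks\ denote the graph obtained from \clkp\ by adjoining the $r(k-1)+1$ ``central'' edges from $c$ to each outer vertex. A direct count gives $v(\clks) = 1 + (r(k-1)+1)(r(\ell-2)+1)$ and $e(\clks) = (r(k-1)+1)(r(\ell-1)+1)$; for $(r,\ell,k)=(2,3,3)$ this recovers the $16$-vertex, $25$-edge graph of Figure~\ref{fig:ex:fstarcolor}.

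The combinatorial heart of the argument is a pigeonhole lemma. Call a copy of \clkp\ on the board \emph{nicely colored} if within each petal the $r$ paths of length $\ell-1$ are monochromatic in $r$ pairwise distinct colors. Suppose a nicely colored copy of \clkp\ is present on the board together with all $r(k-1)+1$ of its central edges (equivalently, a copy of \clks\ with nicely colored petals). Then regardless of the colors Painter has assigned to the central edges, each petal contributes a monochromatic copy of $C_\ell$ through $c$, namely the cycle formed by the central edge (of some color $c_0$) together with the unique path of color $c_0$ inside the petal. Distributing these $r(k-1)+1$ monochromatic cycles among $r$ colors, pigeonhole yields at least $k$ in the same color; these $k$ cycles are edge-disjoint (paths in different petals share no edges, and the central edges are distinct) and all pass through $c$, hence together form a monochromatic \clk.

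The principal technical step is then to show that for any Painter strategy and any $N=\omega(n^{2-v(\clks)/e(\clks)})$, the board after $N$ steps a.a.s.\ contains such a nicely colored copy of \clkp\ together with all its central edges. A standard first-moment calculation (using that the uncolored board is essentially distributed as $G(n,p)$ with $p\sim 2N/n^2$) gives $\omega(1)$ copies of \clks\ on the board, indeed polynomially many for $N$ slightly above this threshold. The remaining task, which is the main obstacle, is to argue that a positive constant fraction of the copies of \clkp\ on the board end up being nicely colored regardless of Painter's strategy. I would handle this by averaging over the per-step color permutations: in each step Painter is forced to assign a bijection from the $r$ colors to the $r$ arriving edges, and averaging uniformly over all $r!$ such bijections (with everything else mediated by her online strategy) makes each individual petal nicely colored with a positive constant probability; a standard second-moment or martingale concentration argument then lifts this to a constant fraction of nicely colored petals under any deterministic adversarial strategy.

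Finally, I would verify the strict inequality $v(\clks)/e(\clks) > 1/m^{r*}(\clk)$, where $m^{r*}$ denotes the exponent governing the edge Achlioptas threshold established in~\cite{Mutze2011} (in analogy to Theorem~\ref{thm:achlioptas-ubound} for the vertex case). For the base instance $(r,\ell,k)=(2,3,3)$ this reads $16/25 > 22/35$; for general $(r,\ell,k)$ I would compute $m^{r*}(\clk)$ explicitly using the recursive formula for $\lambda_{r,\theta}$ from Section~\ref{sec:translation} and verify the inequality directly. Combined with the upper bound $n^{2-v(\clks)/e(\clks)}$ on the balanced Ramsey threshold established above, this yields the strict separation claimed in Theorem~\ref{thm:separation}.
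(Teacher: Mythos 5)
Your construction of $\clkp$ and $\clks$ (petals joined at a common center, plus the center star), your vertex/edge counts, and the pigeonhole lemma showing that a properly colored $\clkp$ together with its center star forces a monochromatic $\clk$ all match the paper exactly. However, the central technical step is not a detail to be filled in later -- it is where your proposal fails. You propose to argue that ``averaging uniformly over all $r!$ bijections'' in each step makes each petal nicely colored with constant probability, and then lift this to a constant fraction of nicely colored copies under any adversarial strategy by concentration. This is not a valid argument: Painter is an \emph{adversary}, not a random colorer, so you cannot average over her choices. A martingale or second-moment bound would show concentration of a quantity around its mean under the random coloring process, but says nothing about what a worst-case deterministic Painter can achieve; she could in principle systematically deviate from anything the average suggests. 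There is a real obstacle here that your sketch does not overcome.

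The paper avoids this pitfall entirely by deploying an \emph{offline} result, Theorem~\ref{thm:m2-bound} (an adaptation of a theorem of Krivelevich, Spöhel and Steger): for $m$ above $n^{2-1/m_2(\clkp)}$, a.a.s.\ \emph{every} balanced $r$-coloring of the $r$-edge-matched random graph contains $\Omega(n^{v(\clkp)}(m/n^2)^{e(\clkp)})$ properly colored copies of $\clkp$. Since any Painter strategy produces some balanced coloring, this gives the required supply of properly colored petals unconditionally, with no averaging over Painter's moves. The paper then runs a two-round argument: after the first $N/2$ steps it finds many properly colored $\clkp$ copies with no center-star edges yet present, and in the remaining $N/2$ steps applies the second moment method (with careful variance bounds via the intersection graphs $J$ and $J_S$, using that $\clks$ is balanced) to show some center star is completed. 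This two-round structure matters because the center-star edges arrive as part of the coloring process too, and conditioning on them not yet being present keeps the second round clean. You should also note that applying Theorem~\ref{thm:m2-bound} requires checking $m_2(\clkp)<e(\clks)/v(\clks)$, which the paper does. Finally, for the Achlioptas lower bound your reference to the vertex formula in Section~\ref{sec:translation} is the wrong one; the paper instead exhibits an explicit near-optimal sequence $H_2,\dotsc,H_h$ in the \emph{edge} version of the maximization \eqref{eq:def-drstar-edge} to obtain a clean lower bound on $m^{r*}(\clk)$ without computing it exactly, which is a lighter-weight route than the one you suggest.
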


We first give an upper bound on the threshold of the balanced Ramsey game with parameters $C_{\ell,k}$ and $r$. To do so we will use an offline result that is very similar and can be proved completely analogously to \cite[Theorem 15]{Krivelevich2010} for the Achlioptas case. For any graph  \(F\) with at least one edge, we let
  \begin{equation}\label{eq:def-m2}
    m_2(F) := \max_{H\subseteq F: v(H)\geq 3} \frac{e(H)-1}{v(H)-2}
  \end{equation}
  if $v(F)\geq 3$, and $m_2(F)=1/2$ otherwise (i.e., if $F=K_2$). By $G^r(n,m)$ we denote a random $r$-edge-matched graph obtained by sampling a random graph $G(n,m)$ on $n$ vertices with $m$ edges uniformly at random, and then partitioning the $m$ edges into sets of size $r$ uniformly at random (we assume that $m$ is divisible by $r$). Note that by symmetry the board of the edge Achlioptas or balanced Ramsey game after $m/r$ steps is distributed exactly like $G^r(n,m)$. A \emph{balanced coloring} of $G^r(n,m)$ is an edge-coloring that uses each of the $r$ available colors for exactly one edge in each $r$-set. Note that in the balanced Ramsey game, the goal is to find such a balanced coloring in an online setting. The following theorem concerns the same problem in an \emph{offline} setting.

\begin{theorem}\label{thm:m2-bound}
  Let \(F\) be a fixed graph with at least one edge, and let $c:E(F)\to\{1, \ldots, r\}$ be an arbitrary edge-coloring of $F$. There exist positive constants \(C=C(F,r)\) and \(a=a(F,r)\) such
  that for \(m \geq Cn^{2-1/m_2(F)}\) with \(m \ll n^2\), a.a.s.\ every balanced
  coloring of $G^r(n,m)$ contains at least \(a n^{v(F)}(m/n^2)^{e(F)}\) many
  copies of \(F\) colored as specified by \(c\).
  \qed
\end{theorem}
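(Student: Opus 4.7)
The plan is to follow the strategy of \cite[Theorem 15]{Krivelevich2010}: fix an arbitrary balanced coloring, prove an exponential lower-tail bound for the number of correctly colored copies of $F$, and then union-bound over all balanced colorings. First I would generate $G^r(n,m)$ in two stages, sampling $G(n,m)$ uniformly at random and then partitioning its edges uniformly into $m/r$ $r$-sets. Each balanced coloring $\sigma$ corresponds to a choice of bijection from the edges of each $r$-set to $\{1,\ldots,r\}$, so the total number of balanced colorings is at most $(r!)^{m/r}=e^{O(m)}$.

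For a fixed balanced coloring $\sigma$, let $Y_\sigma$ denote the number of labeled copies of $F$ in $G^r(n,m)$ colored according to $c$ under $\sigma$. A labeled copy whose $e(F)$ edges lie in $e(F)$ distinct $r$-sets is correctly colored with probability exactly $r^{-e(F)}$, while copies containing an intra-$r$-set collision contribute only a lower-order correction (any two fixed edges share an $r$-set with probability $O(r/m)$). Hence
\[
\E[Y_\sigma] \;=\; (1+o(1))\, r^{-e(F)}\, n^{v(F)}(m/n^2)^{e(F)}.
\]

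The heart of the argument is a Janson-type lower-tail estimate of the form
\[
\P\!\left[Y_\sigma \leq \tfrac12 \E[Y_\sigma]\right] \;\leq\; \exp\!\left(-c'\, n^{v(F)}(m/n^2)^{e(F)}\right)
\]
for some $c'=c'(F,r)>0$, which I would derive by transferring from $G(n,m)$ to $G(n,p)$ with $p=m/\binom{n}{2}$ and applying Janson's inequality to the indicator variables of correctly colored copies (absorbing the random partition and $\sigma$ into independent per-edge color exposures). The hypothesis $m\geq Cn^{2-1/m_2(F)}$ is calibrated so that the Janson exponent $\Omega\!\left(\min_{H\subseteq F,\,e(H)\geq 1} n^{v(H)}p^{e(H)}\right)$ exceeds $m\log(r!)/c'$. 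A union bound over the $e^{O(m)}$ balanced colorings then gives
\[
\P\bigl[\exists\, \sigma:\, Y_\sigma \leq \tfrac12\E[Y_\sigma]\bigr] \;\leq\; (r!)^{m/r}\cdot \exp\!\bigl(-c'\, n^{v(F)}(m/n^2)^{e(F)}\bigr) \;=\; o(1),
\]
and setting $a:=\tfrac12 r^{-e(F)}$ yields the claim.

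The main obstacle is the Janson-type concentration step: $Y_\sigma$ mixes randomness from the underlying graph $G(n,m)$ and from the uniform partition into $r$-sets, so Janson's inequality does not apply directly. Some care is required to rewrite $Y_\sigma$ as a subgraph count in an appropriate independent product model and transfer the bound back to $G^r(n,m)$ via the standard $G(n,p)$--$G(n,m)$ equivalence. A simple Azuma-type martingale exposing one $r$-set at a time turns out to be too weak in the sparse regime $m\ll n^2$, which is precisely why a Janson-style argument is needed. The role of $m_2(F)$ in the hypothesis is exactly to ensure that the Janson exponent is of order at least $m\log r$ uniformly over all nonempty subgraphs $H\subseteq F$, which is what makes the union bound over $e^{O(m)}$ balanced colorings go through.
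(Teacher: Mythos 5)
The paper does not actually prove Theorem~\ref{thm:m2-bound}: it is stated with a \qed and the remark that it ``can be proved completely analogously to \cite[Theorem 15]{Krivelevich2010},'' so there is no in-paper proof to compare against, only the cited reference. Your overall architecture (two-stage generation, union bound over balanced colorings, lower-tail concentration for $Y_\sigma$) is the right high-level plan and is consistent with that reference, but the way you propose to close the argument has a genuine gap in the concentration step.

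You assert that the hypothesis $m \geq C n^{2-1/m_2(F)}$ is ``calibrated so that the Janson exponent $\Omega(\min_{H\subseteq F,\,e(H)\geq 1} n^{v(H)}p^{e(H)})$ exceeds $m\log(r!)/c'$.'' This cannot be made to work by choosing $C$ large, because the minimum is always attained (up to a multiplicative constant independent of $C$) at $H=K_2$: there $n^{v(H)}p^{e(H)} = n^2 p = \Theta(m)$, with the implied constant roughly $2$, no matter how large $C$ is. The $m_2$-hypothesis only guarantees $n^{v(H)}p^{e(H)} \geq (C')^{e(H)-1}\,n^2 p$ for subgraphs with $e(H)\geq 2$, i.e.\ it pushes those terms up, but it has no effect on the $K_2$ term. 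Meanwhile, unwinding the constant in Janson's lower-tail bound for the correctly-colored-copy count (the dominant contribution to $\bar\Delta$ again comes from pairs of copies overlapping in a single edge) yields an exponent of the form $m/\Theta_F(1)$ with $\Theta_F(1)$ growing with $e(F)$; already for $F=K_3$ and $r=2$ this is on the order of $m/18$ or smaller, which does not beat $(m/r)\log(r!)\approx 0.35\,m$. So the crude union bound over all $(r!)^{m/r}$ balanced colorings against a vanilla Janson estimate does not close, and no amount of tuning $C$ fixes it. The Krivelevich et al.\ proof (and hence the intended proof here) must therefore get concentration of a different, stronger kind — e.g.\ via a multi-round/sprinkling decomposition or a structural lemma in the spirit of R\"odl--Ruci\'nski that controls the $K_2$-overlap contribution more carefully — rather than a single application of Janson's inequality followed by a union bound over all colorings. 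You correctly flagged that Azuma is too weak, but the Janson route as sketched has the same defect in the sparse regime, and that is the piece that would need to be supplied.
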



We now prove the desired upper bound on the balanced Ramsey threshold for the graphs $C_{\ell,k}$.

\begin{lemma} \label{lemma:ub-bal}
  For all integers \(\ell\geq 3\), \(k \geq 3\), and $r\geq 2$, the
  threshold for the balanced Ramsey game with parameters $\clk$ and
  \(r\)  is at most
  \begin{equation*}
    N_{\text{UB-bal}}(\ell,k,r,n) := n^{2-\frac{(r(\ell-2)+1)(r(k-1)+1)+1}{(r(\ell-1)+1)(r(k-1)+1)}}.
  \end{equation*}
\end{lemma}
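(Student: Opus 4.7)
The plan is to identify a small witness subgraph $\clks$ whose appearance on the board forces a monochromatic $\clk$ by a pigeonhole argument. I define $\clkp$ as the graph obtained by gluing $r(k-1)+1$ ``petals'' at a common center vertex, where each petal consists of $r$ internally disjoint paths of length $\ell-1$ joining the center to a distinct ``tip'', and I define $\clks$ as $\clkp$ together with the $r(k-1)+1$ ``dashed'' edges joining the center to each tip (cf.\ Figure~\ref{fig:ex:fstarcolor} for the case $\ell=k=3$, $r=2$). A direct count gives $v(\clks)=(r(\ell-2)+1)(r(k-1)+1)+1$ and $e(\clks)=(r(\ell-1)+1)(r(k-1)+1)$, so that $N_{\text{UB-bal}}(\ell,k,r,n)=n^{2-v(\clks)/e(\clks)}$ is exactly the order of magnitude at which $\clks$ starts appearing in $G(n,p)$.

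The key combinatorial ingredient is a pigeonhole lemma: if a copy of $\clks$ on the board is \emph{rainbow} in the sense that the $r$ paths inside each of its petals carry $r$ distinct colors, then any $r$-coloring of its $r(k-1)+1$ dashed edges already forces a monochromatic $\clk$. Indeed, a dashed edge colored $j$ closes a monochromatic $C_\ell$ in color $j$ together with the color-$j$ path of its own petal; among $r(k-1)+1$ dashed edges distributed over $r$ colors, some color $j^*$ receives at least $k$ of them, and the resulting $k$ monochromatic $C_\ell$'s share the center vertex and thus form a monochromatic $\clk$.

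To locate such a rainbow copy I would apply Theorem~\ref{thm:m2-bound} to $F:=\clkp$ together with a fixed rainbow coloring $c$. A short calculation shows that the maximum of $(e(H)-1)/(v(H)-2)$ over $H\subseteq\clkp$ is attained on a single petal and equals $(r(\ell-1)-1)/(r(\ell-2))$, and that this value is strictly smaller than $e(\clks)/v(\clks)$ whenever $\ell,k\geq 3$ and $r\geq 2$; hence $n^{2-1/m_2(\clkp)}\ll N_{\text{UB-bal}}$, and the hypothesis of Theorem~\ref{thm:m2-bound} is comfortably met for $m\gg N_{\text{UB-bal}}$. It follows that a.a.s.\ every balanced coloring of $G^r(n,m)$ already contains $\Omega(n^{v(\clkp)}(m/n^2)^{e(\clkp)})$ rainbow copies of $\clkp$.

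The final step, which I expect to be the main obstacle, is to upgrade ``many rainbow copies of $\clkp$'' to ``at least one rainbow copy of $\clkp$ whose $r(k-1)+1$ dashed edges are also present in $G$''. I would do this via a first/second moment argument exploiting that, in $G(n,p)$, the dashed edges of any given copy are disjoint from its non-dashed edges and hence probabilistically independent of the rainbow structure produced by Painter. Combining the lower bound from Theorem~\ref{thm:m2-bound} with this independence yields an expected number of ``good'' copies (rainbow $\clkp$ with all dashed edges present) of $\Omega(n^{v(\clks)}(m/n^2)^{e(\clks)})$, which tends to infinity for $m\gg N_{\text{UB-bal}}$; a Janson- or second-moment-style concentration, entirely analogous to the one underpinning Theorem~\ref{thm:m2-bound}, should then deliver a.a.s.\ existence of a good copy, and the pigeonhole lemma above finishes the argument. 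The detour through $\clkp$ is necessary because applying Theorem~\ref{thm:m2-bound} directly to $\clks$ is too weak: a single petal of $\clks$ already satisfies $(e-1)/(v-2)=(\ell-1)/(\ell-2)$, which exceeds $e(\clks)/v(\clks)$, so $m_2(\clks)$ produces a threshold strictly larger than $N_{\text{UB-bal}}$.
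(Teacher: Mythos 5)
Your overall blueprint coincides with the paper's: same witness graphs $\clkp$ and $\clks$ (with identical vertex and edge counts), same pigeonhole observation that a ``rainbow''/``properly colored'' copy of $\clkp$ whose $k^*=r(k-1)+1$ center-star edges are present forces a monochromatic $\clk$, same identification $m_2(\clkp)=(r(\ell-1)-1)/(r(\ell-2))<e(\clks)/v(\clks)$, and the same appeal to the offline Theorem~\ref{thm:m2-bound} to guarantee many rainbow copies of $\clkp$ regardless of Painter's strategy.

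The gap is in the step you yourself flag as the main obstacle, and your proposed fix does not hold up. You claim that ``the dashed edges of any given copy are disjoint from its non-dashed edges and hence probabilistically independent of the rainbow structure produced by Painter.'' Edge-disjointness in $G(n,p)$ gives independence between disjoint sets of edge \emph{indicators}, but the set of rainbow copies of $\clkp$ is not a function of the petal edges alone: it is a function of Painter's coloring, which is an \emph{adaptive} function of the entire edge arrival sequence, including the dashed edges. So Painter could in principle bias the locations of her rainbow $\clkp$'s against center stars that she has already seen appear, and your independence claim, as stated, does not justify the expectation calculation $\Omega\bigl(n^{v(\clks)}(m/n^2)^{e(\clks)}\bigr)$ for ``good'' copies. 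The paper handles precisely this adaptivity issue by a \emph{two-round} argument: it plays $N/2$ steps, applies Theorem~\ref{thm:m2-bound} to the resulting board, discards the $o(1)$-fraction of rainbow $\clkp$'s that already see a center-star edge, then treats the remaining $N/2$ random edges as fresh randomness (conditioned on the first-round board), applying a second moment calculation over the fixed family of $M$ rainbow copies and their center stars $S_1,\dotsc,S_M$. Conditioning on the first round is what actually delivers the independence your argument asserts without justification, and also requires the a.a.s.\ bounds~\eqref{eq:M-J-aas-bound} on overlap counts $M_J$ to control the variance; neither of these is visible in your sketch. If you split the game into two halves and condition accordingly, your plan becomes the paper's proof.
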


\begin{proof}
  \newcommand{\dbal}{d_{\text{bal}}}
  Consider the graph obtained by joining one endpoint of \(r\) paths of length
  \(\ell-1\) in one common vertex and the other endpoint of each in a second
  common vertex. We call this graph a \emph{petal} and the two vertices in
  which all paths meet the \emph{endpoints} of the petal. We will refer to the non-edge connecting the two endpoints of a petal as the
  \emph{missing edge} of that petal.  Let \(\clkp\) denote the graph obtained by joining one endpoint of $k^*:=r(k-1)+1$ many petals at a common vertex. We say that a copy of \(\clkp\) on the game board is \emph{properly colored} if the two endpoints of each of its  petals are connected by a path (of length $\ell-1$) in each color. See Figure~\ref{fig:ex:fstar} for an example
  of a properly colored \(C_{3,3}^{2+}\). The \emph{center
    star} of a copy of \(\clkp\) is the graph obtained as the union of all
  missing edges of the petals of \(\clkp\). We denote with \(\clk^{r*}\) the
  union of \(\clkp\) and its center star, cfr.\ Figure~\ref{fig:ex:fstarcolor}. Clearly, we have
  \begin{equation*}
    \begin{split}
      e(\clk^{r*}) &= e(\clkp) + k^* = (r(\ell-1)+1)(r(k-1)+1),\\
      v(\clk^{r*}) &= v(\clkp) = (r(\ell-2)+1)(r(k-1)+1)+1.
    \end{split}
  \end{equation*}
  Let $d^*:=e(\clk^{r*})/v(\clk^{r*})$, and note that $N_{\text{UB-bal}}(\ell,k,r,n)= n^{2-1/d^*}$.

It is not hard to check that $m_2(\clkp) = (r(\ell-1)-1)/(r(\ell-2))$ (the maximum in~\eqref{eq:def-m2} is attained by a single petal of \clkp), and it is also quite straightforward to verify that this quantity is strictly less than $d^*$.

  Let now \(N \gg N_{\text{UB-bal}}=n^{2-1/d^*}\) with $N\ll n^2$ be given, and assume w.l.o.g.\
  that $N$ is even. Set \(p :=
  rN/n^2\). Observing that $N\gg n^{2-1/d^*}\geq n^{2-1/m_2(\clk^{r+})}$, we obtain with Theorem~\ref{thm:m2-bound} that a.a.s., after $N/2$ steps of the balanced Ramsey game the board  contains $a(\clkp,r)\cdot n^{v(\clkp)}p^{e(\clkp)}2^{-e(\clkp)}=:M'$ many properly colored copies of $\clkp$, regardless of Painter's strategy.  Furthermore, the expected number of copies of $\clkp$ (ignoring any coloring) in which at least one edge of the center star is already present after $N/2$ steps is $O(M'p)=o(M')$. It follows with Markov's inequality that after $N/2$ steps a.a.s.\ there are at least $M:=0.99M'$ properly colored copies of $\clkp$ such that in each of these, none of the edges of the center star is already present.

  Let \(\clkp \cup_{J} \clkp\) denote the union of two copies of \(\clkp\)
  which intersect in a graph \(J\) and whose (missing) center stars intersect
  in a nonempty graph \(J_S\). Further let \(J^* := J \cup J_S\). Let the random variable
  \(M_J\) denote the number of copies of \(\clkp \cup_{J} \clkp\) (ignoring any
  coloring) contained in the game board after the first \(N/2\) steps. We have
  \begin{equation}\label{eq:M-J-center-star-bound}
    \begin{split}
      \E[M_{J}] &=
			\Theta(n^{2v(\clkp)-v(J)}p^{2e(\clkp)-e(J)})
      \\&=\Theta(n^{2v(\clkp)}p^{2e(\clkp)})n^{-v(J)}p^{-e(J)}
      \\&= \Theta(M^2)n^{-v(J^*)}p^{-e(J^*)+e(J_S)},
    \end{split}
  \end{equation}
	where in the last step we used that $e(J^*)=e(J)+e(J_S)$.
	
  Note that \(\clks\) is a balanced graph, i.e.\ for all subgraphs \(H
  \subseteq \clks\) with \(v(H) \geq 1\) we have \(e(H)/v(H) \leq
  e(\clks)/v(\clks) = d^*\). This holds in particular also for \(H = J^*\). As \(p
  \gg n^{-1/d^*}\), it follows that $n^{v(J^*)}p^{e(J^*)}=\omega(1)$. Hence by Markov's inequality we obtain from
  \eqref{eq:M-J-center-star-bound} that a.a.s.
  \begin{equation}
    \label{eq:M-J-aas-bound}
    M_J = o(M^2)p^{e(J_S)}
  \end{equation}
(i.e., for an approriate function $f(n)=o(1)$ a.a.s.\ we have $M_J\leq f(n) M^2 p^{e(J_S)}$).

  For the remaining $N/2$ steps of the game we condition on having at least~$M$ properly colored copies of $\clkp$ whose center star edges are not already present, and on 
  \(M_{J}\) being as above for all \(J \subseteq \clkp\). As the number of graphs $J$ is a constant depending only on $k$, $\ell$, and $r$, a.a.s\ all these properties hold simultaneously after $N/2$ steps. Using the second moment method, we will show that in the remaining $N/2$ steps,  a.a.s.\ in at least one of the properly colored copies of \(\clkp\) all edges of the center star will appear. Clearly, this then forces Painter to complete
  a monochromatic copy of $C_{\ell,k}$ by the pigeon-hole principle.

Fix a family of exactly $M$ properly colored
  copies of \(\clkp\) (say the lexicographally first ones; w.l.o.g.\ $M$ is an integer), and
  let \(S_1, \dotsc, S_M\) denote the (not necessarily distinct) center stars of these copies. For each $S_i$ let $Z_i$
  denote the indicator random variable for the event that the \(k^*\) edges of $S_i$  will appear in the remaining $N/2$ steps of the game. Let \(Z\)
  denote the sum over all \(Z_i\). As the $rN/2$ random edges revealed in the second half of the game are distributed uniformly among the $\binom{n}{2}-rN/2$ edges never seen before, we have
	    \begin{equation*}
      \E[Z_i]  =\frac{\binom{\binom{n}{2}-rN/2-k^*}{rN/2-k^*}}{\binom{\binom{n}{2}-rN/2}{rN/2}}= \Theta(p^{k^*})
    \end{equation*}
for all $i$, and hence
    \begin{equation*}
      \E[Z] =  \Theta(M p^{k^*}) = \Theta(n^{v(\clkp)}p^{e(\clkp)+k^*})=				
			\Theta\bigl(n^{v(\clk^{r*})}p^{e(\clk^{r*})}\bigr).
    \end{equation*}
    By our choice of \(N\) this quantity is \(\omega(1)\).

  It remains to establish
  concentration of $Z$ via the second moment method --- it then follows that $Z\geq 1$ a.a.s., which as discussed implies that Painter loses the game.
  We have
  \begin{equation*}
    \begin{split}
      \Var[Z] &= \sum_{i,j = 1}^{M}\bigl(\E[Z_iZ_j]-\E[Z_i]\E[Z_j]\bigr) \leq
      \sum_{\substack{J \subseteq \clkp\\e(J_S) \geq 1}} M_J\cdot \Theta(p^{2k^*- e(J_S)})\\
      &\stackrel{\eqref{eq:M-J-aas-bound}}{=} \sum_{\substack{J \subseteq
          \clkp\\e(J_S) \geq 1}} o(M^2)p^{2k^*} = o(\E[Z]^2).
    \end{split}
  \end{equation*}
  The last equality follows from the fact that the number of possible choices
  for \(J\) is a constant depending only on \(k, \ell\) and \(r\). This
  concludes the proof.
\end{proof}

We conclude the proof of Theorem~\ref{thm:separation} by deriving a lower bound on the Achlioptas threshold for the graphs $C_{\ell,k}$ from the general formula given in~\cite{Mutze2011}.

\begin{lemma} \label{lemma:lb-achl}
  For all integers \(\ell\geq 3\), \(k \geq 3\), and $r\geq 2$, the
  threshold for the Achlioptas game with parameters $\clk$ and
  \(r\)  is at least
  \begin{equation*}
    N_{\text{LB-Achl}}(\ell,k,r,n) = n^{2-\frac{(r(l-2)+1)(r^k-1)+r-1}{(r(l-1)+1)(r^k-1)}}.
  \end{equation*}\end{lemma}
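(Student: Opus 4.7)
The plan is to invoke the general threshold formula of Mütze et al.~\cite{Mutze2011} for the \emph{edge} Achlioptas game, which (by the edge-analogue of Theorem~\ref{thm:achlioptas-ubound}) gives the threshold as $n^{2-1/m^{r*}(F)}$, with $m^{r*}(F)$ defined via the natural edge-ordered counterparts of \eqref{eq:def-ci}--\eqref{eq:def-mrstar}. Thus it suffices to prove
\begin{equation*}
m^{r*}(C_{\ell,k}) \;\geq\; L \;:=\; \frac{(r(\ell-1)+1)(r^k-1)}{(r(\ell-2)+1)(r^k-1)+r-1}.
\end{equation*}

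Since $m^{r*}$ is a $\min_\pi\max_{H_1}$, the task is: for every edge-ordering $\pi$ of $E(C_{\ell,k})$, exhibit a subgraph $H_1\subseteq C_{\ell,k}$ together with witness subgraphs $H_2,\ldots,H_h$ whose ratio $\sum_i c_ie(H_i)/(1+\sum_i c_i(v(H_i)-1))$ reaches $L$. I would pass to the alternative $\lambda$-formulation (edge analogue of Theorem~\ref{thm:threshold-equivalence}), so that it is enough to show $\Lambda_{r,\theta_0}(C_{\ell,k})\geq 0$ for $\theta_0:=1/L$. Because $\Lambda=\max_\pi\min_H\lambda$, I only need to produce \emph{one} good edge-ordering $\pi^*$ and verify $\lambda_{r,\theta_0}(H,\pi^*|_H)\geq 0$ for every subgraph $H\subseteq C_{\ell,k}$.

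The natural $\pi^*$ reveals the $k$ cycles one at a time in order $D_1,\ldots,D_k$: within each $D_j$, all $\ell-2$ edges on the path of $D_j$ avoiding the center are revealed first, then the two edges of $D_j$ incident to the center vertex $v_0$ are revealed last (so the very last edge added to the board is incident to $v_0$ and closes $D_k$). With this ordering the recursion for $\lambda$ unrolls transparently: each "interior" edge of $D_k$ contributes a factor $r$ (as in \eqref{eq:min-nice-recursion}, via Lemma~\ref{lem:lr-drop-minimization}), while the $\min_J$ over subgraphs $J$ containing the last-added center-incident edge produces one branch that stays inside the current cycle (paying only edges of $D_k$) and another that extends to include all earlier cycles. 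Writing out \eqref{eq:lambda-r} along $\pi^*$ yields a telescoping geometric sum $1+r+r^2+\cdots+r^{k-1}=(r^k-1)/(r-1)$ that captures the shared-center structure; the $(r(\ell-1)+1)$ and $(r(\ell-2)+1)$ factors come respectively from counting the $\ell$ edges and $\ell-1$ non-center vertices of one cycle weighted by the $r$ colour choices. A direct calculation then shows $\lambda_{r,\theta_0}(C_{\ell,k},\pi^*)=0$, giving the required equality at the full graph.

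The main obstacle is verifying $\lambda_{r,\theta_0}(H,\pi^*|_H)\geq 0$ for the \emph{strict} subgraphs $H$, where the recursion is interrupted (e.g.\ $H$ may contain only part of one cycle, or skip some cycles entirely). The key observation that tames this case analysis is that $C_{\ell,k}$ is vertex-transitive on each cycle and the cycles are exchangeable, so after collecting terms the relevant inequality reduces to showing that removing any single edge from the "full" witness only \emph{increases} $\lambda$. This monotonicity can be checked on a small handful of representative subgraphs (a single partial cycle; a union of some complete cycles with a partial cycle attached; and a subgraph missing the central vertex entirely), in each case reducing to an elementary inequality of the form $L\leq (r(\ell-1)+1)S_j/((r(\ell-2)+1)S_j+1)$ for partial sums $S_j=1+r+\cdots+r^{j-1}$, $j\leq k$, which is easily verified from the definition of $\theta_0$.
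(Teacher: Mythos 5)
There is a sign --- and hence a quantifier --- error in your reduction to the $\lambda$-formulation, and it is not cosmetic. You want $m^{r*}(C_{\ell,k}) \geq L$. Since $m^{r*}(F) = 1/\theta^*(F,r)$ with $\theta^*$ the unique root of the non-increasing function $\theta \mapsto \Lambda_{r,\theta}(F)$, this is equivalent to $\theta^* \leq \theta_0 := 1/L$, i.e.\ to $\Lambda_{r,\theta_0}(C_{\ell,k}) \leq 0$, \emph{not} $\geq 0$ as you write. Proving $\Lambda_{r,\theta_0} \geq 0$ would establish $m^{r*} \leq L$, an \emph{upper} bound on the Achlioptas threshold, which is the wrong direction. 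The sign error propagates directly into the quantifier structure: because $\Lambda_{r,\theta}(F) = \max_\pi \min_{H \subseteq F}\lambda_{r,\theta}(H,\pi|_H)$, showing $\Lambda_{r,\theta_0} \leq 0$ requires that for \emph{every} edge ordering $\pi$ there exist a subgraph $H$ with $\lambda_{r,\theta_0}(H,\pi|_H) \leq 0$. Your plan --- fix one favourable ordering $\pi^*$ and verify $\lambda \geq 0$ for all subgraphs --- is the dual statement (it is the shape used for the upper bound in Theorem~\ref{thm:achlioptas-ubound} and the Painter strategy in Theorem~\ref{thm:balanced-lbound}), and it cannot establish the required lower bound on $m^{r*}$.

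Once the inequality is flipped, the universal quantifier over $\pi$ is restored and the argument becomes essentially the paper's. The paper works directly with the $d^{r*}$ formulation: to bound $m^{r*} = \min_\pi \max_{H_1} d^{r*}(H_1,\pi|_{H_1})$ from below, it fixes an \emph{arbitrary} ordering $\pi$, takes $H_1 = C_{\ell,k}$, and exhibits a witness sequence whose ratio is exactly $L$. Crucially, the witness sequence is chosen \emph{adaptively} to $\pi$: one identifies the first edge $e_{t_j}$ of each cycle under $\pi$, relabels the cycles as $C_1,\dotsc,C_k$ accordingly, and sets $H_{t_j} = \bigcup_{i\geq j}C_i$ (single-edge $H_i$ otherwise). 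The resulting ratio is $L$ for every $\pi$. Your ``reveal the cycles one at a time'' ordering $\pi^*$ is one instance, but an adversarial $\pi$ can interleave the cycles arbitrarily, and the adaptive choice of the $H_{t_j}$ is precisely what makes the computation ordering-independent. With the corrected inequality, the strict-subgraph case analysis you anticipate also disappears: one only needs $\lambda_{r,\theta_0}(C_{\ell,k},\pi)\leq 0$ for each $\pi$, not $\lambda\geq 0$ for every subgraph.
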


\begin{proof}
For the reader's convenience we reproduce the general edge Achlioptas threshold formula here. For notational details we refer to~\cite{Mutze2011}.

For any nonempty edge-ordered graph $(H_1,\pi)$, $\pi=(e_1,\dots,e_h)$, any sequence of subgraphs $H_2,\dots,H_h\subseteq H_1$ with $H_i\subseteq H_1\setminus\{e_1,\dots,e_{i-1}\}$ and $e_i\in H_i$ for all $2\leq i\leq h$, and any integer $r\geq 2$, define coefficients $c_i=c_i((H_1, \pi), H_2, \dots,H_h,r)$ recursively by
\begin{equation*}
\begin{split}
c_1 &:= r\enspace,\\
c_i &:= (r-1)\cdot\sum_{j=1}^{i-1}c_j\ind{e_i \in H_j}\enspace, \quad 2\leq i \leq h\enspace,
\end{split}
\end{equation*}
(where \(\ind{e_i \in H_j}=1\) if \(e_i \in H_j\) and \(\ind{e_i \in H_j}=0\)
otherwise), and set
\begin{equation} \label{eq:def-drstar-edge}
d^{r*}(H_1, \pi) := \max_{\substack{H_2, \dots, H_h\\ \forall i\geq 2:\; H_i\subseteq H_1\setminus\{e_1,\dots,e_{i-1}\} \;\wedge\; e_i\in H_i}} \frac{1+\sum_{i=1}^h c_i(e(H_i)-1)}{2+\sum_{i=1}^h c_i(v(H_i)-2)}\enspace.
\end{equation}
Furthermore, we set for any integer $r\geq 2$ and any nonempty graph $F$
\begin{equation*}
m^{r*}(F) := \min_{\pi\in\Pi(E(F))}\max_{H_1\subseteq F} d^{r*}(H_1, \pi|_{H_1})\enspace.
\end{equation*}
  The threshold of the Achlioptas game with parameters $F$ and $r$ is then given by \(N_{0}(F,r,n) = n^{2-1/m^{r*}(F)}\).

  We now prove that $m^{r*}(\clk)$ is bounded from below as claimed in the lemma.    Let
  \(\pi = (e_1, \dotsc, e_h)\) be an arbitrary permutation of the edges of
  \clk. Denote
  with \(e_{t_1}, \dotsc, e_{t_k}\) the first edge in each of the $k$ cycles of $\clk$ according to $\pi$, in order of their appearance in $\pi$. (Thus in particular \(e_{t_1} = e_1\).)
  Let \(C_1, \dotsc, C_k\) the corresponding cycles in $\clk$, i.e. $e_{t_i}\in C_i$ for all $i$. Choose
  \begin{equation*}
    H_i =
    \begin{cases}
      e_i & i \notin \{t_1, \dotsc, t_k\}\\
      \bigcup_{j \geq i}^k C_j & i = t_j
    \end{cases}.
  \end{equation*}
  Note that this choice is compatible with the requirements of
  \eqref{eq:def-drstar-edge}. This yields
  \begin{equation*}
    \begin{split}
      e(H_{t_j}) &= (k-j+1)\ell\\
      v(H_{t_j}) &= (k-j+1)(\ell-1)+1,
    \end{split}
  \end{equation*}
  and
  \begin{equation*}
    c_{t_j} =
    \begin{cases}
      r & j = 1\\
      (r-1)r^{j-1} & j \neq 1.
    \end{cases}
  \end{equation*}
  Note that the coefficients \(c_i\) for \(i \notin \{t_1,\dotsc, t_k\}\) are not
  required, as both \(e(H_i)-1\) and \(v(H_i)-2\) are 0. It is somewhat
  tedious but straightforward to verify that
  \begin{equation*}
    \frac{1 + \sum_{i = 1}^{h}c_i\bigl(e(H_i)-1\bigr)}{2 + \sum_{i = 1}^{h}c_i
      \bigl(v(H_i)-2\bigr)} = \frac{(r(l-1)+1)(r^k-1)}{(r(l-2)+1)(r^k-1)+r-1}.
  \end{equation*}
  As this holds for any edge ordering \(\pi\in\Pi(E(C_{\ell,k}))\), we readily obtain the desired lower bound
  \begin{equation*}
    m^{r*}(C_{\ell,k}) = \min_{\pi \in \Pi(E(C_{\ell,k}))}\max_{H_1\subseteq C_{\ell,k}} \drs(H_1, \pi|_{H_1}) \geq \frac{(r(l-1)+1)(r^k-1)}{(r(l-2)+1)(r^k-1)+r-1}.
  \end{equation*}
\end{proof}

Theorem~\ref{thm:separation} now follows, after some calculation, from Lemmas~\ref{lemma:ub-bal} and~\ref{lemma:lb-achl}.

\subsection*{Acknowledgment}
The authors thank the anonymous referee for the thorough and helpful comments concerning the presentation of this work.

\bibliographystyle{plain}
\bibliography{biblio}

\begin{thebibliography}{1}

\bibitem{Bollobas1981}
B.~Bollob\'{a}s.
\newblock {Threshold functions for small subgraphs}.
\newblock {\em Mathematical Proceedings of the Cambridge Philosophical
  Society}, 90(2):197--206, 1981.

\bibitem{purple-book}
S.~Janson, T.~{\L}uczak, and A.~Ruci{\'n}ski.
\newblock {\em {Random graphs}}.
\newblock Wiley-Interscience Series in Discrete Mathematics and Optimization.
  Wiley-Interscience, 2000.

\bibitem{krivelevich2009avoiding}
M.~Krivelevich, P-S. Loh, and B.~Sudakov.
\newblock {Avoiding small subgraphs in Achlioptas processes}.
\newblock {\em Random Structures \& Algorithms}, 34(1):165--195, 2009.

\bibitem{Krivelevich2010}
M.~Krivelevich, R.~Sp\"{o}hel, and A.~Steger.
\newblock {Offline thresholds for Ramsey-type games on random graphs}.
\newblock {\em Random Structures \& Algorithms}, 36(1):57--79, 2010.

\bibitem{Marciniszyn2007a}
M.~Marciniszyn, D.~Mitsche, and M.~Stojakovi\'{c}.
\newblock {Online balanced graph avoidance games}.
\newblock {\em European Journal of Combinatorics}, 28(8):2248--2263, 2007.

\bibitem{Marciniszyn2009a}
M.~Marciniszyn, R.~Sp\"{o}hel, and A.~Steger.
\newblock {Online Ramsey Games in Random Graphs}.
\newblock {\em Combinatorics, Probability and Computing}, 18(1-2):271, 2009.

\bibitem{Mutze2011}
T.~M\"{u}tze, R.~Sp\"{o}hel, and H.~Thomas.
\newblock {Small subgraphs in random graphs and the power of multiple choices}.
\newblock {\em Journal of Combinatorial Theory, Series B}, 101(4):237--268,
  2011.

\bibitem{Prakash2009}
A.~Prakash, R.~Sp\"{o}hel, and H.~Thomas.
\newblock {Balanced online Ramsey games in random graphs}.
\newblock {\em The Electronic Journal of Combinatorics}, 16:1--22, 2009.

\end{thebibliography}

\newpage
\section{Appendix}
\label{sec:appendix}

\subsection{Upper bound for the Achlioptas game}
\label{sec:upper-bound}

In this section we prove Theorem~\ref{thm:achlioptas-ubound}. The proof here
is an adaptation to the vertex case of the corresponding edge-case proof in
\cite{Mutze2011}. We make use of Theorem~\ref{thm:threshold-equivalence} (ignoring the last sentence in its statement; this is restated and proved as Lemma~\ref{lem:threshold-equivalence} below) and a
technical lemma (Lemma~\ref{lem:subsets-of-Fpi-are-not-dense} below), both of which are proved in the next section.

Before we start we wish to present the following adaptation to \(r\)-matched
graphs of Bollobás' classical small subgraphs result~\cite{Bollobas1981}.
\begin{theorem}\label{thm:bollobas-r-matched}
  Let \(r\geq 2\) be a fixed integer, and let \(F\) be a fixed \(r\)-matched graph
  with at least one edge. Define
  \begin{equation*}
    m^r(F) := \max_{\substack{H\subseteq F:\\\kappa(H) > 0}} \frac{e(H)}{\kappa(H)}.
  \end{equation*}
  Then the threshold for the appearance of\/ \(F\) in \(G^r(n,p)\) is
  \begin{equation*}
    p_{0}(n) = n^{-1/m^r(F)}.
  \end{equation*}
  Further, if \(p \gg n^{-1/m^r(F)}\) we have that the number of copies of
  \(F\) in \(G^r(n,p)\) is a.a.s.\
  \begin{equation*}
    \Theta(n^{\kappa(F)}p^{e(F)}).
  \end{equation*}
\end{theorem}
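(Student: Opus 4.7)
The plan is to adapt the classical Bollob\'as small subgraphs argument to the $r$-matched setting, splitting into the usual 0-statement (no copy of $F$ a.a.s.) and 1-statement ($\Theta(n^{\kappa(F)}p^{e(F)})$ copies a.a.s.). The first-moment estimate provided by Lemma~\ref{lem:r-matched-expected} will be the starting point in both directions.

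For the 0-statement, I would choose a subgraph $H^*\subseteq F$ with $\kappa(H^*)\geq 1$ attaining the maximum in the definition of $m^r(F)$, so that $e(H^*)/\kappa(H^*)=m^r(F)$. Lemma~\ref{lem:r-matched-expected} then gives that the expected number of copies of $H^*$ in \Grnp{} is $\Theta(n^{\kappa(H^*)}p^{e(H^*)})$, which under $p\ll n^{-1/m^r(F)}=n^{-\kappa(H^*)/e(H^*)}$ is $o(1)$. A direct application of Markov's inequality yields that a.a.s.\ there is no copy of $H^*$, and hence no copy of $F$, in \Grnp.

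For the 1-statement, I would apply the second moment method to the random variable $X$ counting copies of $F$ in \Grnp. Lemma~\ref{lem:r-matched-expected} gives $\E[X]=\Theta(n^{\kappa(F)}p^{e(F)})=\omega(1)$. To control $\Var[X]$, I would expand $\E[X^2]$ as a sum over ordered pairs of copies $(\phi_1,\phi_2)$ and parametrize these pairs by the $r$-matched structure of their overlap. Since the partition of \Grnp{} into $r$-sets is fixed and each copy of $F$ uses entire $r$-sets bijectively, two copies either share an entire $r$-set or share none of its vertices; hence the overlap is naturally an $r$-matched graph isomorphic to some subgraph $J\subseteq F$. Only pairs with $e(J)\geq 1$ contribute to the variance (the others correspond to independent events), and for each isomorphism type $J$ the corresponding contribution is $O(n^{2\kappa(F)-\kappa(J)}p^{2e(F)-e(J)})=O(\E[X]^2\cdot n^{-\kappa(J)}p^{-e(J)})$. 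The defining inequality $e(J)/\kappa(J)\leq m^r(F)$ combined with the hypothesis $p\gg n^{-1/m^r(F)}$ gives $n^{\kappa(J)}p^{e(J)}=\omega(1)$, so each summand is $o(\E[X]^2)$. Summing over the constant number of overlap types yields $\Var[X]=o(\E[X]^2)$, and Chebyshev's inequality then gives $X=(1+o(1))\E[X]$ a.a.s., which is the desired asymptotic formula.

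The main obstacle, and the only place where the $r$-matched structure enters non-trivially, will be the bookkeeping of overlap types in the variance calculation: specifically, the verification that the intersection of two embedded $r$-matched copies of $F$ is genuinely an $r$-matched graph (with complete $r$-sets) isomorphic to some $r$-matched subgraph of $F$, and that the resulting count of pairs with a given overlap type is of the expected order $\Theta(n^{2\kappa(F)-\kappa(J)})$. Once this is in place, so that the inequality $e(J)/\kappa(J)\leq m^r(F)$ can be invoked for each overlap type, the argument reduces to the standard Bollob\'as-style computation.
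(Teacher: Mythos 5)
Your proposal is correct and is exactly the argument the paper has in mind: the paper explicitly states that Theorem~\ref{thm:bollobas-r-matched} follows ``by an easy application of the first and second moment method'' and does not spell it out further, so you are filling in precisely the intended standard Bollob\'as-style computation (with Lemma~\ref{lem:r-matched-expected} as the first-moment input). One small point worth tightening in a full write-up: the overlap of two embedded copies $\phi_1,\phi_2$ need not literally be ``isomorphic to a subgraph $J\subseteq F$'' via a canonical choice, since $\phi_1^{-1}$ and $\phi_2^{-1}$ may pull the shared $r$-sets back to different subgraphs $J_1,J_2\subseteq F$; but since the preimage under $\phi_1$ alone is a genuine $r$-matched subgraph $J_1\subseteq F$ with $\kappa(J_1)=$ (number of shared $r$-sets) and $e(J_1)=$ (number of shared edges), the inequality $e(J_1)/\kappa(J_1)\leq m^r(F)$ still applies and the bound $O(n^{2\kappa(F)-\kappa(J_1)}p^{2e(F)-e(J_1)})$ per overlap type goes through as you state, with only finitely many types to sum over.
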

One can prove this by an easy application of the first and second moment method.
We do not require this result, but it is useful to gain a better intuition for
our proof. Note that we could state the first part of Theorem~\ref{thm:bollobas-r-matched} equivalently as follows. 
\begin{theorem}\label{thm:bollobas-r-matched-mu}
  Let \(r\geq 2\) be a fixed integer, and let \(F\) be a fixed \(r\)-matched graph
  with at least one edge. Let \(\theta' = \theta'(F,r)\) be the unique solution of
  \begin{equation*}
    \min_{H \subseteq F}\mr(H) \stackrel{!}{=} 0,
  \end{equation*}
  where \(\mr\) is defined in \eqref{eq:def-mu-r-theta}. Then the threshold
  for the appearance of\/ \(F\) in \(G^r(n,p)\) is
  \begin{equation*}
    p_{0}(n) = n^{-\theta'}.
  \end{equation*}
\end{theorem}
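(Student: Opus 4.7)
Since the two formulations are equivalent via the definition $\mu_{r,\theta}(H)=\kappa(H)-\theta\,e(H)$ in \eqref{eq:def-mu-r-theta} — indeed $\min_{H\subseteq F}\mu_{r,\theta'}(H)=0$ iff $\theta'\cdot\max_{H\colon\kappa(H)>0}e(H)/\kappa(H)=1$ iff $\theta'=1/m^r(F)$ — it suffices to prove the first formulation of Bollobás' theorem for $r$-matched graphs. The strategy is the standard first/second moment argument; the only nontrivial step is the bookkeeping for how two copies of $F$ can overlap in the $r$-matched setting.

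\textbf{Zero-statement (if $p\ll n^{-1/m^r(F)}$ then a.a.s.\ no copy of $F$).} Let $H\subseteq F$ be a subgraph achieving $e(H)/\kappa(H)=m^r(F)$ (so in particular $\kappa(H)\geq 1$). Every copy of $F$ contains a copy of $H$, so it suffices to show $\#\{\text{copies of }H\}\to 0$ a.a.s. By Lemma~\ref{lem:r-matched-expected},
\[
\E[\#H] = \Theta\bigl(n^{\kappa(H)}p^{e(H)}\bigr) = \Theta\bigl((n\,p^{e(H)/\kappa(H)})^{\kappa(H)}\bigr)=\Theta\bigl((n\,p^{m^r(F)})^{\kappa(H)}\bigr),
\]
which tends to $0$ by the assumption on $p$. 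Markov's inequality finishes the zero-statement.

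\textbf{One-statement (if $p\gg n^{-1/m^r(F)}$ then a.a.s.\ $\#F=\Theta(n^{\kappa(F)}p^{e(F)})$).} Let $X$ denote the number of copies of $F$ in $\Grnp$. From Lemma~\ref{lem:r-matched-expected}, $\E[X]=\Theta(n^{\kappa(F)}p^{e(F)})$, and this tends to $\infty$ because $e(F)/\kappa(F)\leq m^r(F)$. Write $X=\sum_A X_A$, where $A$ ranges over all possible occurrences of $F$ (each occurrence is a choice of $\kappa(F)$ of the $r$-sets of $\Grnp$ together with a valid identification of $V(F)$ with their union) and $X_A$ is the indicator that this particular occurrence is present. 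Two occurrences $A,B$ are independent whenever they share no $r$-set of $\Grnp$, so $\Cov(X_A,X_B)=0$ in that case. If they share exactly the $r$-sets indexed by some nonempty $r$-matched graph, the shared substructure in $F$ (under either copy) is a subgraph $H\subseteq F$ with $\kappa(H)\geq 1$ — here I should note that even though the two copies might identify the shared $r$-sets with different $r$-sets of $F$, what matters is that the number of shared $r$-sets is $\kappa(H)$ for some $H\subseteq F$, and the number of shared edges in $\Grnp$ is at most $e(H)$. Counting pairs by their overlap type gives
\[
\Var[X] \;\leq\; \sum_{\substack{H\subseteq F\\ \kappa(H)\geq 1}} \Theta\bigl(n^{2\kappa(F)-\kappa(H)}\,p^{2e(F)-e(H)}\bigr)
\;=\; \E[X]^2\cdot\sum_{\substack{H\subseteq F\\ \kappa(H)\geq 1}}\Theta\bigl(n^{-\kappa(H)}p^{-e(H)}\bigr).
\]
For each such $H$ we have $e(H)/\kappa(H)\leq m^r(F)$, so $n^{\kappa(H)}p^{e(H)}\geq (n\,p^{m^r(F)})^{\kappa(H)}\to\infty$. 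As the sum has only constantly many terms, $\Var[X]=o(\E[X]^2)$, and Chebyshev's inequality yields $X=(1+o(1))\E[X]$ a.a.s., which establishes both the appearance of $F$ and the claimed asymptotic count.

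\textbf{Main obstacle.} The only subtle point is the overlap analysis in the variance bound: one must verify that when two embeddings of $F$ into $\Grnp$ share some $r$-sets, the shared $r$-sets together with the shared edges can always be realized as an $r$-matched subgraph $H\subseteq F$ (satisfying the inequality $e(H)/\kappa(H)\leq m^r(F)$ used above). Since each vertex of $\Grnp$ lies in a unique $r$-set, two embeddings either agree on a whole $r$-set or use disjoint $r$-sets in $\Grnp$, and this makes the overlap substructure an honest $r$-matched graph; the rest is routine.
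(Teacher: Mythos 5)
Your proof is correct and is precisely the ``easy application of the first and second moment method'' that the paper alludes to without spelling out (the paper states Theorems~\ref{thm:bollobas-r-matched} and~\ref{thm:bollobas-r-matched-mu} without proof). The overlap bookkeeping in your second-moment calculation --- observing that two embeddings of $F$ share whole $r$-sets or none, so the shared $r$-sets and edges pull back under either embedding to an $r$-matched subgraph $H\subseteq F$ with $\kappa(H)\geq 1$ and $e(H)$ at least the number of shared edges --- is exactly the point that makes the standard argument go through in the $r$-matched setting.
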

Concerning the second part of Theorem~\ref{thm:bollobas-r-matched}, recall also that for $p=n^{-\theta}$  we have $n^{\kappa(F)}p^{e(F)}=n^{\mr(F)}$. The two ``dual'' formulations of our threshold result in Theorem~\ref{thm:achlioptas-ubound} and (the last sentence of) Theorem~\ref{thm:threshold-equivalence} are related to each other similarly as the two statements above.

In order to prove Theorem~\ref{thm:achlioptas-ubound} it is not sufficient to
consider only a graph \(F\), we additionally need to consider the order in
which its vertices are presented to the player. In our proof this is encoded
by an ordered graph \((F, \pi)\). Recall that when we use terms such as first
or last for the vertices of \(F\) we mean this with respect to the order in
which they are presented to the player. In that context, for \(\pi =
(u_1,\dotsc, u_f)\) the last vertex is \(u_1\) and the first \(u_f\).

If as an adversary we wanted to force the player to create a copy of
\(F\), we would wish to be able to present \(r\) copies of \(F_{-}\) (\(F\)
without the last vertex), an additional \(r\)-set, and edges such that choosing any of
the vertices in the \(r\)-set completes a copy of \(F_{-}\) to a copy of
\(F\). In such a situation the player has no choice and loses. Of course the
player could try to avoid creating copies of \(F_{-}\) in the first place,
so that this situation does not arise. However, applying the same argument
recursively, we could force the creation of \(r\) copies of \(F_{-}\) by
\(r^2\) copies of \(F_{2-}\) (\(F\) missing the last 2 vertices), \(r\)
many \(r\)-sets, and all edges necessary to join each of the \(r^2\) vertices
in the \(r\)-sets to a different copy of \(F_{2-}\), in such a way as to form
\(r^2\) many ``threats'' for the player. The player must choose one vertex in
all of these \(r\) many \(r\)-sets and is thus forced to create \(r\) copies
of \(F_{-}\).
We can continue this reasoning recursively until at the tail end we
have \(r^{v(F)-1}\) disjoint \(r\)-sets, out of each of which the player is
forced to choose 1 vertex.
Assuming that such a recursive ``history graph'' for \(F\)
appears in the game in the correct order, it would guarantee that the player is
left no choice but to create a copy of \(F\). We denote such a construct with
\(\Fpi\) and formalize its definition below.

In the following, a \emph{grey-black \(r\)-matched graph} is a tuple
\(H = (V, E, \mathcal{K}, B)\), where \((V, E, \mathcal{K})\) is an
\(r\)-matched graph, and \(B\) is a set of vertices containing exactly one
vertex from every \(r\)-set in \(\mathcal{K}\). We interpret \(B\) as the set
of vertices chosen by the player during the game, and call them the
black vertices. The remaining \(\abs{\mathcal{K}}(r-1)\) vertices are considered
grey (to indicate ``presented but not chosen'').
Sometimes we ignore the coloring and tacitly
identify \(H\) with the underlying \(r\)-matched graph \((V, E,
\mathcal{K})\).

Recall that the board of the game is distributed as a random \(r\)-matched
graph \(\Grnp\), and that we defined its state after \(1 \leq i \leq n/r\)
rounds with \(G_{i}\). For the purpose of this section we additionally require
information about which vertices were chosen by the player. To this end, for
each \(1 \leq i \leq n/r\), in this section we append to \(G_{i}\) the set
\(B_i\) of vertices chosen by the player up to round \(i\) and consider
\(G_{i}\) to be a grey-black \(r\)-matched graph.

\begin{definition}
 \label{def:grey-black-F-pi}
 Let \((F, \pi)\) be an ordered graph with \(\pi = (u_1, \dotsc, u_f)\). Then
 we define the \emph{grey-black} \(r\)-matched graph \(\Fpi\) and
 a distinguished black copy of \((F,\pi)\), the \emph{central copy of \((F,\pi)\) in \(\Fpi\)},
 recursively as follows
 \begin{itemize}
 \item If \(v(F) = 1\), then \(\Fpi\) consists of\/ one \(r\)-set with a
   distinguished black vertex. This vertex is the central copy of \(F\) in
   \(\Fpi\).
 \item If \(v(F) \neq 1\), then \(\Fpi\) consists of the disjoint union of\/
   \(r\) copies of \((F\setminus{u_1})^{\pi\setminus u_1}_r\), denoted
   \(F^{\pi}_{-,1}, \dotsc, F^{\pi}_{-,r}\), an additional \(r\)-set \((v_1, \dotsc,
   v_r)\), and \(r\deg_F(u_1)\) many additional edges which for all \(1\leq
   i\leq r\) connect \(v_i\) to \(F^\pi_{-,i}\) and extend the central copy of
   \((F\setminus u_1, \pi\setminus u_1)\) in \(F^\pi_{-,i}\) to a copy of
   \((F,\pi)\). The vertex \(v_1\) is chosen as black and the copy of
   \((F,\pi)\) containing it is the central copy of \((F,\pi)\) in \(\Fpi\).
 \end{itemize}
 We refer to the additional \(r\)-set in the recursive step as the
 \emph{central \(r\)-set} of \(\Fpi\).
\end{definition}

As explained above, if the \(r\)-sets of a copy of \(\Fpi\) are
presented to the player in an ordering such that all \(r\)-sets deeper in the
recursion are presented before those at lower recursion depths, then the
player is forced to create a copy of~\(F\). As it turns out, the threshold
for this to happen in the game coincides with the threshold for the appearance of $\Fpi$ in the random $r$-matched graph $\Grnp$ (as an $r$-matched graph without any ordering or coloring). This last threshold is given by
Theorem~\ref{thm:bollobas-r-matched}.

Note that the ordering \(\pi\) on the vertices of \(F\) is crucial. For
different choices of \(\pi\) the corresponding grey-black \(r\)-matched graphs
\(\Fpi\) may have very different thresholds for their appearance in $\Grnp$. As the player has no influence over the order in which \(r\)-sets are
presented to her, the threshold for the game is bounded from above by, and indeed coincides with, the \emph{lowest} threshold for the appearance of \(\Fpi\) over all choices of \(\pi\). I.e., the threshold stated in Theorems~\ref{thm:achlioptas-ubound} and~\ref{thm:threshold-equivalence} can alternatively be written as $$p_0(F,r,n)=\min_{\pi\in \Pi(V(F))} n^{-1/m^r(\Fpi)},$$ where
\(m^r(\Fpi)\) is as defined in Theorem~\ref{thm:bollobas-r-matched}.

At a high level, the proof is an induction over \(v(F)\) and mirrors the
recursive definition of \(\Fpi\). At each step of the induction we divide the
\(r\)-sets presented to the player in 2 halves. We let the player play on the
first half and by induction we know that a.a.s.\ she must have created many copies of
\((F\setminus u_1)^{\pi\setminus u_1}\) (in the notation of
Definition~\ref{def:grey-black-F-pi}). Then we let the player play on the
second half of the \(r\)-sets and argue via first and second moment method that, conditional on a ``good'' first round,
a.a.s.\ enough \(r\)-sets presented in the second round are connected to \(r\)
copies of \((F\setminus u_1)^{\pi\setminus u_1}\) as in
Definition~\ref{def:grey-black-F-pi}.

To apply the second moment method we need the following lemma, which
essentially states that for \(p \gg n^{-\theta'}\), where \(\theta'\) is
defined below, the expected number of copies in \(G^r(n,p)\) of any subgraph
of \(\Fpi\) is \(\omega(1)\), cf.\ the remark after  Theorem~\ref{thm:bollobas-r-matched-mu}.
\begin{lemma}\label{lem:subsets-of-Fpi-are-not-dense}
 Let \(r \geq 2\) be an integer, and let \((F,\pi)\) be a nonempty ordered
 graph. Let \(\Fpi\) be as in Definition~\ref{def:grey-black-F-pi}, and
 let \(\theta' = \theta'(F,\pi,r)\) be the unique solution of
 \begin{equation}\label{eq:def-theta-prime-fixed-pi}
   \min_{H\subseteq F}\lr(H,\pi|_{H}) \stackrel{!}{=} 0,
 \end{equation}
 where \(\lr()\) is defined in \eqref{eq:lambda-r}. Then every \(r\)-matched
 subgraph \(J \subseteq \Fpi\) satisfies
 \begin{equation}\label{eq:mu-subgraph-positive}
   \mu_{r,\theta'}(J)\geq 0,
 \end{equation}
 where \(\mu_{r,\theta'}()\) is defined in~\eqref{eq:def-mu-r-theta}.
\end{lemma}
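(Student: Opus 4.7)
The plan is to prove by induction on $v(F)$ the following strengthening of the lemma: for every $\theta \in [0,\theta'(F,\pi,r)]$ and every $r$-matched subgraph $J \subseteq \Fpi$, there exists an ordered subgraph $(H,\pi|_H)$ of $F$ with $u_1 \in V(H)$ if and only if $J$ contains the central $r$-set of $\Fpi$, such that
\begin{equation*}
\mu_{r,\theta}(J) \geq \lambda_{r,\theta}(H,\pi|_H).
\end{equation*}
Applied at $\theta = \theta'$ this yields the lemma, since by definition of $\theta'$ every $\lambda_{r,\theta'}(H,\pi|_H)$ with $H \subseteq F$ is at least the minimum in~\eqref{eq:def-theta-prime-fixed-pi}, which is $0$.

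A preliminary observation that lets the induction go through at the same value of $\theta$ is that $\theta'(F,\pi,r) \leq \theta'(F\setminus u_1,\pi\setminus u_1,r)$: every $H \subseteq F\setminus u_1$ satisfies $\lambda_{r,\theta}(H,\pi|_H) = \lambda_{r,\theta}(H,(\pi\setminus u_1)|_H)$, so the minimum in~\eqref{eq:def-theta-prime-fixed-pi} over $H \subseteq F$ is dominated by the analogous one over $H \subseteq F\setminus u_1$, and the stated inequality of the roots follows from both functions being non-increasing in $\theta$. The base case $v(F)=1$ is immediate: either $J = \emptyset$ with $H := \emptyset$, or $J = \Fpi$ with $H := F$.

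For the inductive step, I would split on whether the central $r$-set of $\Fpi$ is contained in $J$. If it is not, $J$ decomposes as a disjoint union $\bigsqcup_i J_i$ with $J_i \subseteq F^\pi_{-,i}$, and the IH furnishes subgraphs $\tilde H_i \subseteq F\setminus u_1$ with $\mu_{r,\theta}(J_i) \geq \lambda_{r,\theta}(\tilde H_i,\pi|_{\tilde H_i})$; since $\tilde H_i \subseteq F$ and $\theta \leq \theta'(F,\pi,r)$, each such $\lambda$-value is nonnegative. Taking $H := \tilde H_{i^*}$ for any index $i^*$ with $J_{i^*}$ nonempty (and $H := \emptyset$ if $J$ itself is empty) then yields
\[
\mu_{r,\theta}(J) = \sum_i \mu_{r,\theta}(J_i) \geq \lambda_{r,\theta}(\tilde H_{i^*},\pi|_{\tilde H_{i^*}}) + \sum_{i \neq i^*} \lambda_{r,\theta}(\tilde H_i,\pi|_{\tilde H_i}) \geq \lambda_{r,\theta}(H,\pi|_H),
\]
which is the desired inequality.

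The main obstacle is the remaining case, in which $J$ contains the central $r$-set and
\[
\mu_{r,\theta}(J) = 1 + \sum_{i=1}^r \bigl[\mu_{r,\theta}(J_i) - \theta|E_{C,i}|\bigr],
\]
where $E_{C,i}$ denotes the connecting edges from $v_i$ that lie in $J$. Here I would further strengthen the IH so that, for each $i$, the witness $\tilde H_i$ obtained from $J_i$ also contains the set $N_i \subseteq N_F(u_1)$ of those neighbors of $u_1$ whose connecting edge to $v_i$ belongs to $E_{C,i}$. Defining $H_i := \tilde H_i \cup \{u_1\}$ with edges $E(\tilde H_i) \cup \{u_1 w : w \in N_i\}$ then gives the pointwise bound $\mu_{r,\theta}(J_i) - \theta|E_{C,i}| \geq \lambda_{r,\theta}(H_i \setminus u_1, \pi|_{H_i \setminus u_1}) - \theta \deg_{H_i}(u_1)$. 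The technical heart is to select a single witness $H \subseteq F$ with $u_1 \in H$ such that the recursive formula~\eqref{eq:lambda-r} for $\lambda_{r,\theta}(H,\pi|_H)$ accounts for this decomposition of $\mu_{r,\theta}(J)$: the $r-1$ ``alternate'' terms in~\eqref{eq:lambda-r} involve a minimum over subgraphs $J' \subseteq H$ with $u_1 \in J'$, and making each $H_i$ an admissible choice in that minimum requires $H$ to contain every $H_i$. Exploiting the symmetry of the $r$ sub-copies in $\Fpi$, I would relabel which sub-copy is the ``first'' so as to arrange this containment, which then allows the $r-1$ leftover contributions to be absorbed into the minimum in~\eqref{eq:lambda-r}. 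Matching this witness construction with the recursion defining $\lambda_{r,\theta}$ is the crux of the argument, and is directly parallel to the edge-case proof in~\cite{Mutze2011}.
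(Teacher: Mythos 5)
Your overall strategy---induction on $v(F)$, splitting an $r$-matched subgraph $J\subseteq\Fpi$ across the $r$ sub-copies of $(F\setminus u_1)^{\pi\setminus u_1}_r$---is a genuinely different route from the paper's. The paper first reduces (via additivity of $\mr$ over connected components) to connected $J$ containing a ``deepest'' central $r$-set, then uses the branch decomposition of $\Fpi$ and the linearity of $\mr$ to argue directly that the \emph{minimum} of $\mr$ over such $J$ satisfies a recursion which matches the one defining $\lr$, finally invoking Lemma~\ref{lem:lr-drop-minimization} to convert into $\lr$. Your pointwise-witness formulation trades the paper's ``rarest-subgraph'' bookkeeping for an inductive hypothesis that must be carried along, which is a reasonable tradeoff, and your preliminary observation $\theta'(F,\pi,r)\le\theta'(F\setminus u_1,\pi\setminus u_1,r)$ correctly allows the induction to proceed at a fixed~$\theta$.

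However, there are two genuine gaps in the case where $J$ contains the central $r$-set. First, you explicitly rely on a ``further strengthening'' of the induction hypothesis---that the witness $\tilde H_i$ for $J_i$ can be chosen with $N_i\subseteq V(\tilde H_i)$---but this strengthening is neither stated as a precise invariant to be carried through the induction nor verified; without $N_i\subseteq V(\tilde H_i)$, the graph $H_i := \tilde H_i\cup\{u_1\}$ with edges to $N_i$ is not a well-defined subgraph of $F$, and the pointwise bound you assert does not follow from the plain IH (indeed $|E_{C,i}|=|N_i|\ge|N_i\cap V(\tilde H_i)|$ goes the wrong way). Second, your proposed fix for assembling a single witness $H$---relabeling which sub-copy is ``first''---does not arrange the containments $H_i\subseteq H$: permuting the indices cannot make the $H_i$ nested, and the recursion~\eqref{eq:lambda-r} forces the first summand to be the \emph{full} $H\setminus u_1$ term, so taking $H$ large enough to contain all $H_i$ may inflate $\lr(H\setminus u_1,\pi\setminus u_1)-\theta\deg_H(u_1)$ beyond what $a_1 := \mr(J_1)-\theta|E_{C,1}|$ can cover. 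The correct way to close this second gap is not relabeling but Lemma~\ref{lem:lr-drop-minimization}: letting $H^*$ be a minimizer of $\lr(H,\pi|_H)$ over $H\subseteq F$ with $u_1\in H$, that lemma gives $\lr(H^*,\pi|_{H^*})=1+r\bigl(\lr(H^*\setminus u_1,\pi|_{H^*\setminus u_1})-\theta\deg_{H^*}(u_1)\bigr)$, which is $1+r$ times the \emph{global} minimum of the ``drop'' quantity; since each $a_i$ is at least $\lr(H_i\setminus u_1,\pi|_{H_i\setminus u_1})-\theta\deg_{H_i}(u_1)$ and hence at least that minimum, $\mr(J)=1+\sum_i a_i\ge\lr(H^*,\pi|_{H^*})$ follows with no nesting requirement at all. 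This is in fact precisely the role equation~\eqref{eq:min-nice-recursion} plays at the conclusion of the paper's own argument.
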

The proof of this lemma is long and technical, and therefore postponed to the next subsection.

The next lemma implements the inductive proof strategy outlined above. The
parameter \(t\) ensures that we can require inductively that \(r\cdot t\)
copies of e.g.\ \((F\setminus u_1)_r^{\pi\setminus u_1}\) evolve into \(t\)
copies of \(\Fpi\).

\begin{lemma}
 \label{lem:upper-bound-iteration}
 Let \(r\geq 2\) be an integer, and let \((F,\pi)\) be a nonempty ordered
 graph. Let \(t \geq 1\) be an integer, and let \(\mathcal{F}^\pi_r := t\cdot
 \Fpi\) denote the disjoint union of\/ \(t\) copies of \(\Fpi\). If \(1\gg p
 \gg n^{-\theta'}\), where \(\theta' = \theta'(F,\pi,r)\) is the unique
 solution of
 \begin{equation}
   \min_{H\subseteq F}\lr(H,\pi|_{H}) \stackrel{!}{=} 0\, ,
 \end{equation}
 and \(\lr()\) is defined in \eqref{eq:lambda-r}, then a.a.s.\ the number of
 copies of \(\mathcal{F}_r^\pi\) (as a grey-black \(r\)-matched
 graph) in \(G_{n/r}\)  is
 \begin{equation}
   \label{eq:upper-bound-lemma-theta}
   \Omega\bigl(n^{\kappa(\mathcal{F}_r^\pi)}p^{e(\mathcal{F}_r^\pi)}\bigr)
 \end{equation}
 regardless of the strategy of the player.
\end{lemma}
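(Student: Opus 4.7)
The plan is to proceed by induction on $v(F)$. The base case $v(F)=1$ is immediate: here $\Fpi$ is a single $r$-set with one distinguished black vertex, and whichever vertex the player picks in each $r$-set produces a grey-black copy of $\Fpi$. Hence $\binom{n/r}{t}\cdot t! = \Theta(n^t) = \Theta(n^{\kappa(\mathcal{F}_r^\pi)}p^{e(\mathcal{F}_r^\pi)})$ ordered choices of $t$ disjoint $r$-sets already supply the required number of copies of $\mathcal{F}_r^\pi$.

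For the inductive step, set $(F',\pi') := (F\setminus u_1,\pi\setminus u_1)$ and split the game into two phases of $n/(2r)$ rounds each (assuming WLOG that $2r\mid n$). Since $F'\subsetneq F$, the minimum in the definition of $\theta'$ ranges over fewer subgraphs for $(F',\pi')$, and because $\lr(H,\sigma)$ is non-increasing in $\theta$ for each fixed $(H,\sigma)$, we have $\theta'(F',\pi',r) \geq \theta'(F,\pi,r)$. Thus $p\gg n^{-\theta'(F,\pi,r)}$ implies $p\gg n^{-\theta'(F',\pi',r)}$, and applying the inductive hypothesis to $(F',\pi')$ with multiplier $rt$ in phase 1 gives that, regardless of the player's phase 1 strategy, a.a.s.\ the board after phase 1 contains $Y=\Omega(n^{rt\kappa'}p^{rte'})$ copies of $rt\cdot (F')_r^{\pi'}$, where $\kappa' := \kappa((F')_r^{\pi'})$ and $e' := e((F')_r^{\pi'})$.

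Conditioning on such a ``good'' phase 1 outcome, let $X$ count configurations consisting of $t$ pairwise disjoint central $r$-sets $T_1,\ldots,T_t$ from phase 2 together with an assignment, for each $i\in\{1,\ldots,t\}$, of the $r$ vertices of $T_i$ to $r$ distinct copies of $(F')_r^{\pi'}$ in the phase 1 board (with all $rt$ chosen copies pairwise disjoint and disjoint from the $T_i$), such that for every pair (vertex of $T_i$, assigned copy) the $\deg_F(u_1)$ edges needed to extend the central copy of $F'$ in that assigned copy to a copy of $F$ (with the chosen vertex playing the role of $u_1$) are present in $G_{n/r}$. Up to relabelling of the $F^\pi_{-,i}$, every such configuration yields a grey-black copy of $\mathcal{F}_r^\pi$ regardless of which vertex of each $T_i$ the player subsequently picks as black, so it suffices to show $X = \Omega(n^{\kappa(\mathcal{F}_r^\pi)}p^{e(\mathcal{F}_r^\pi)})$ a.a.s. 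The first moment is immediate: the required edges are incident to phase 2 vertices and hence independent of phase 1, each configuration is realised with probability $p^{tr\deg_F(u_1)}$, and there are $\Theta(n^t\cdot Y)$ candidate configurations, giving
\begin{equation*}
\E[X] = \Omega\!\left(n^t\cdot n^{rt\kappa'}p^{rte'}\cdot p^{tr\deg_F(u_1)}\right) = \Omega\!\left(n^{\kappa(\mathcal{F}_r^\pi)}p^{e(\mathcal{F}_r^\pi)}\right),
\end{equation*}
using $\kappa(\Fpi) = 1 + r\kappa'$ and $e(\Fpi) = r(e' + \deg_F(u_1))$.

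The main technical obstacle is the variance bound, where Lemma~\ref{lem:subsets-of-Fpi-are-not-dense} becomes essential. A standard subgraph-count decomposition yields
\begin{equation*}
\frac{\Var[X]}{\E[X]^2} = O\!\left(\sum_{J} \frac{1}{n^{\kappa(J)}p^{e(J)}}\right),
\end{equation*}
where $J$ ranges over a constant number of nonempty $r$-matched subgraphs describing the intersection of two copies of $\mathcal{F}_r^\pi$. Such intersections are disjoint unions of $r$-matched subgraphs of $\Fpi$, so $\mu_{r,\theta'}(J) \geq 0$ by Lemma~\ref{lem:subsets-of-Fpi-are-not-dense} together with additivity of $\kappa$ and $e$ over disjoint unions. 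For $p\gg n^{-\theta'}$ this gives $n^{\kappa(J)}p^{e(J)} = \omega(1)$ for every relevant $J$, so $\Var[X] = o(\E[X]^2)$, and Chebyshev's inequality yields $X = \Omega(\E[X])$ a.a.s., completing the induction.
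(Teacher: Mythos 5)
Your induction scaffold, the two-phase split, the comparison $\theta'(F\setminus u_1,\pi\setminus u_1,r)\geq\theta'(F,\pi,r)$, and the first-moment computation all match the paper's argument. The problem is in the variance step, which is precisely where the technical heart of this lemma lies.

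When you assert that ``a standard subgraph-count decomposition yields $\Var[X]/\E[X]^2 = O\bigl(\sum_J 1/(n^{\kappa(J)}p^{e(J)})\bigr)$,'' you are tacitly assuming that the number of pairs of your candidate configurations whose union has intersection type $J$ is of the order one would see for uniformly random placements, namely $O\bigl(\E[X]^2/(n^{\kappa(J)}p^{e(J)})\bigr)$. But the placements here are \emph{not} uniform: you have conditioned on phase~1, and the copies of $(F\setminus u_1)_r^{\pi\setminus u_1}$ to which you assign phase-2 vertices are the ones the \emph{player's strategy} produced. An adversarial strategy could in principle concentrate its colored copies so heavily on a few vertices that the overlap counts blow up, and nothing you have said rules this out. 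Without an a.a.s.\ upper bound on the number of overlapping pairs, Chebyshev gives nothing.

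This is exactly where the paper's additional work is located, and it is where Lemma~\ref{lem:subsets-of-Fpi-are-not-dense} is used for a purpose beyond what you state. The paper groups the variance sum by the intersection type $\mathcal{J}$ and bounds the number $M_{\mathcal{J}}$ of contributing pairs by an auxiliary quantity $M'_{\mathcal{J}}$ counting \emph{uncolored} $r$-matched occurrences of the relevant overlapping structure in the phase-1 board (times the number of ways to choose the phase-2 $r$-sets). The key observation is that $M'_{\mathcal{J}}$ depends only on the positions of the edges in $G_{n/(2r)}$, not on the player's colors --- it is a statistic of the uniform random $r$-matched graph alone and is therefore strategy-free. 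Its expectation is computed in~\eqref{eq:M-J-prime-p}; Lemma~\ref{lem:subsets-of-Fpi-are-not-dense} is then invoked to control it, and Markov's inequality yields $M'_{\mathcal{J}}\ll M^2p^{e(T_{\mathcal{J}})}$ a.a.s. Only after conditioning the second round on \emph{both} the inductive lower bound~\eqref{eq:num-N} \emph{and} the overlap bounds~\eqref{eq:M-J-prime-bound} for every $\mathcal{J}$ does the variance estimate go through. That double conditioning, and the observation that the uncolored structure provides an adversary-free majorant on the overlap counts, is what your write-up omits; it is not routine bookkeeping but the crux of the proof.

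A second, smaller gap: you anchor the induction at $v(F)=1$, but you then apply the inductive hypothesis to $(F\setminus u_1,\pi\setminus u_1)$, which may be edgeless with several vertices (for instance if $F$ is a star with $u_1$ at its center). In that case $\theta'(F\setminus u_1,\pi\setminus u_1,r)$ is undefined and the lemma's hypotheses are not met. The paper fixes this by treating every edgeless $F$ as a base case, where the conclusion is trivial since $\mathcal{F}^\pi_r$ is then simply $\kappa(\mathcal{F}^\pi_r)$ disjoint $r$-sets. You should do the same.
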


Before we prove this lemma, we show how it implies Theorem~\ref{thm:achlioptas-ubound}.

\begin{proof}[Proof of Theorem~\ref{thm:achlioptas-ubound}]
  By the equivalence stated in Theorem~\ref{thm:threshold-equivalence} (and proved in
  Lemma~\ref{lem:threshold-equivalence} below), it suffices to prove that for $p\gg n^{-\theta^*(F,r)}$, the player will a.a.s.\ create a copy of $F$ no matter how she plays. Let \(\pi \in \Pi(V(F))\) be an
  ordering maximizing the right hand side of~\eqref{eq:Lambda-r-theta} for $\theta= \theta^*(F,r)$, such that $\theta^*(F,r)= \theta'(F,\pi,r)$ for $\theta'$  as in
  Lemmas~\ref{lem:subsets-of-Fpi-are-not-dense} and
  \ref{lem:upper-bound-iteration}. Applying Lemma~\ref{lem:upper-bound-iteration} for
  \(t = 1\), we obtain that \(G_{n/r}\) a.a.s.\
  contains
  \begin{equation*}
    \Omega(n^{\kappa(\Fpi)}p^{e(\Fpi)}) \gg n^{\mu_{r,\theta^*}(F)}\geq 1,
  \end{equation*}
  many copies of \(\Fpi\), where the last inequality follows from Lemma~\ref{lem:subsets-of-Fpi-are-not-dense}. The central copy of \((F,\pi)\) in each of these
  copies of \(\Fpi\) is black, i.e.\ all its vertices were selected by the
  player. 
\end{proof}

We now prove Lemma~\ref{lem:upper-bound-iteration}, giving the main inductive argument of our upper bound proof.

\begin{proof}[Proof of Lemma~\ref{lem:upper-bound-iteration}]
 \newcommand{\Fonemin}{\mathcal{F}^{\pi}_{-}}
 We prove this lemma by induction on \(v(F)\) using the second moment
 method.

 To simplify the notation we drop all subscripts \(r\) from \(F^\pi_r\) and
 \(\mathcal{F}^\pi_r\).

 As a base case for the induction we consider the case of an empty \(F\). The
 lemma does not apply to this case directly (as \(\theta'\) is not
 well-defined), but a statement equivalent to
 \eqref{eq:upper-bound-lemma-theta} still holds and is all that we require for
 the induction.  If \(F\) contains no edges then \(\mathcal{F}^\pi\) contains no edges either and consists only of \(\kappa(\mathcal{F}^\pi)\) many disjoint \(r\)-sets. It trivially holds
 that \(G_{n/r}\) contains \(\Theta(n^{\kappa(\mathcal{F}^\pi)})\) many copies
 of \(\mathcal{F}^\pi\), regardless of the choice of \(p\).

 To discuss the induction step we first introduce some notation. Let \(\pi =
 (u_1, \dotsc, u_f)\), \(\pi_- = \pi \setminus u_1\) and \(F_{-} = F\setminus u_1\).
 Further let \(F_{-}^\pi\) denote the
 grey-black \(r\)-matched graph $(F_-)^{\pi_-}$, and denote by \(\Fonemin\) the
 disjoint union of \(rt\) copies of \(F^\pi_{-}\).

 We use a two-round approach for the induction step. In the first round we let
 the player make her choices for all \(r\)-sets in \(G_{n/(2r)}\). By the induction
 hypothesis we obtain a lower bound on the number of copies of \(\Fonemin\) that the
 player must have created which holds with high probability. Conditioning on
 the fact that the bound from the first round holds, we then derive a bound for
 the number of copies of \(\mathcal{F}^\pi\) which the player is forced to
 create when she is presented the remaining \(r\)-sets in \(G_{n/r}\).

 Note that if \(F_{-}\) is nonempty, then \(\theta'(F_-, \pi_-, r) \geq
 \theta'(F,\pi,r)\) (cf.~\eqref{eq:def-theta-prime-fixed-pi}), and we can
 apply the induction hypothesis for \(t \leftarrow r\cdot t\) and $(F,\pi) \leftarrow ( F_-, \pi_-)$ to \(G_{n/(2r)}\). If
 \(F_{-}\) is empty we apply the base case of the induction described
 above.

 We have that a.a.s.\ at least
 \begin{equation}
   \label{eq:num-N}
   N := cn^{\kappa(\Fonemin)}p^{e(\Fonemin)}
 \end{equation}
copies of $\Fonemin$ are created in the first round for some appropriate constant $c>0$. For the second round we condition on this
 event (and also on~\eqref{eq:M-J-prime-bound} below, which is however irrelevant for the time being). We fix a set of exactly $N$ copies of $\Fonemin$ (say the $N$ lexicographically first ones), and only consider these throughout the following.

 Recall that by the construction given in Definition~\ref{def:grey-black-F-pi}
 we can extend \(r\) copies of \(F^\pi_-\) to one copy of \(F^\pi\). To do so
 we add one new \(r\)-set and \(r \deg_F(u_1)\) edges (\(u_1\) is the last
 vertex of \((F,\pi)\)). Each of the \(r\) central copies of \((F_-,\pi_-)\) is
 connected to a different vertex of the \(r\)-set by \(\deg_F(u_1)\) edges and
 becomes a copy of \((F,\pi)\).

 By repeating the above \(t\) times in parallel, any \(t\) disjoint \(r\)-sets
 presented in the second round together with one of the \(N\) copies of \(\Fonemin\) can be
 extended to a copy of \(\mathcal{F}^\pi\), provided that the required edges
 appear in the second round of the game.

 Let \(M\) be the number of possible pairs of \(t\) disjoint \(r\)-sets and
 one copy of \(\Fonemin\). We index these pairs with $i=1, \ldots, M$. For each such pair there may be several possible edge sets which extend the pair to a copy of \(\mathcal{F}^\pi\) as described. We fix one arbitrarily and denote this edge set by $T_i$. We denote with \(\mathcal{F}^\pi_i\) the copy
 of \(\mathcal{F}^\pi\) that is created if all edges of $T_i$ appear during the second round. Note that $|T_i|=t\cdot r\deg_F(u_1)$ for all $i$. By \(\mathcal{K}_i\) we denote the family of \(t\) disjoint \(r\)-sets that belongs to pair $i$. Note that each such family belongs to $N$ pairs in total. 

 There are \(\Theta(n^t)\) possible ways to choose
 the \(r\)-sets, so we have
 \begin{equation}
   \label{eq:num-M}
   M = \Theta\bigl(n^{t}\bigr) \cdot N\stackrel{(\ref{eq:num-N})}{=} \Theta\bigl(n^{t +
     \kappa(\Fonemin)}p^{e(\Fonemin)}\bigr) =
   \Theta\bigl(n^{\kappa(\mathcal{F}^\pi)}p^{e(\Fonemin)}\bigr).
 \end{equation}

 For \(i = 1, \dotsc, M\) we define the indicator variable \(Z_i\) for the
 event that \(T_i\) is contained in \(G_{n/r}\). Set \(Z = \sum_{i=1}^M
 Z_i\). Note that $Z$ is a lower bound on the number of copies of $\mathcal{F}^\pi$ created during the second round.

 For each \(Z_i\) we have
 \begin{equation}
   \label{eq:e-z_i}
   \E[Z_i] = \P[Z_i = 1] = p^{rt\deg_F(u_1)}.
 \end{equation}
 For the expected value of \(Z\), conditioned on (\ref{eq:num-N}), we thus obtain
 \begin{equation}
   \label{eq:upper-bound-exp-z}
     \E[Z] = \sum_{i = 1}^M \E[Z_i] \stackrel{(\ref{eq:e-z_i})}{=}
     Mp^{rt\deg_F(u_1)} \stackrel{(\ref{eq:num-M})}{=} \Theta\bigl(n^{\kappa(\mathcal{F}^\pi)}p^{e(\mathcal{F}^\pi)}\bigr).
 \end{equation}

 To apply the second moment method, we need to bound the variance of~\(Z\).
 Denote by \(I \subseteq \{1,\dotsc, M\}^2\) the set of pairs of indices
 \((i,j)\) such that \(T_i \cap T_j \neq \emptyset\). For \((i,j) \in I\) let
 \(\kappa_{ij} = \mathcal{K}_i \cap \mathcal{K}_j\) and \(t_{ij} = |T_i\cap
 T_j|\). For such pairs of indices we have
 \begin{equation}
   \label{eq:ezizj}
   \E[Z_iZ_j] = p^{2rt\deg_F(u_1) - t_{ij}}.
 \end{equation}
 For indices \(i,j\) with \(T_i \cap T_j = \emptyset\) on the other hand
 \(Z_i\) and \(Z_j\) are independent and can be dropped from the variance
 calculation. We obtain
 \begin{equation}
     \label{eq:upper-bound-var-1}
     \begin{split}
       \Var[Z] &= \sum_{i,j = 1}^M \Bigl(\E[Z_i Z_j] - \E[Z_i]\E[Z_j]\Bigr)
       \leq \sum_{(i,j)\in I} \E[Z_iZ_j] \\&= \sum_{(i,j)\in I} p^{2rt\deg_F(u_1)
         - t_{ij}}.
     \end{split}
 \end{equation}

 Let \(\mathcal{J} \subseteq \mathcal{F}^\pi\) be a subgraph that contains at
 least one of the \(t\) central \(r\)-sets of the \(t\) copies of \(F^\pi\) in
 \(\mathcal{F}^\pi\). Denote with \(\mathcal{K}_\mathcal{J}\) the family of these
 central \(r\)-sets in \(\mathcal{J}\), and with \(\kappa_\mathcal{J} := \abs{\mathcal{K}_\mathcal{J}}\) their
 number. Let \(\mathcal{J}_-\) be the \(r\)-matched graph obtained from
 \(\mathcal{J}\) by removing \(\mathcal{K}_{\mathcal{J}}\) and all incident edges. Let \(T_{\mathcal{J}}\) be the graph induced by
 the edges of \(\mathcal{J}\) between \(\mathcal{K}_\mathcal{J}\) and \(\mathcal{J}_-\) .

 Let  \(M_{\mathcal{J}}\) denote the number of pairs \((i,j)\) for which the intersection of \(\mathcal{F}^\pi_i\) and  \(\mathcal{F}^\pi_j\) is isomorphic to $\mathcal{J}$.
Note that then \(t_{ij} =
 e(T_{\mathcal{J}})\) and $|\mathcal{K}_i\cup\mathcal{K}_j|
=2t-\kappa_{\mathcal{J}}$. 

We will bound \(M_{\mathcal{J}}\) by the number of
 (uncolored) copies of \(\mathcal{F}^\pi_{-} \cup_{\mathcal{J}_-}
 \mathcal{F}^\pi_-\) which are created in the first round times the
 \(\Theta(n^{2t - \kappa_{\mathcal{J}}})\) choices for
$\mathcal{K}_i$ and $\mathcal{K}_j$ from all $r$-sets of the second round. Here \(\mathcal{F}^\pi_{-}
 \cup_{\mathcal{J}_-} \mathcal{F}^\pi_-\) denotes an uncolored \(r\)-matched
 graph formed by the union of two copies of \(\mathcal{F}^\pi_{-}\) which
 intersect in \(\mathcal{J_-}\).

 Let thus $M'_\mathcal{J}$ denote the number of copies of \(\mathcal{F}^\pi_{-}
 \cup_{\mathcal{J}_-} \mathcal{F}^\pi_-\) contained in $G_{n/(2r)}$, multiplied with the number of choices for
$\mathcal{K}_i$ and $\mathcal{K}_j$ from the $r$-sets of the second round. Note that $M'_\mathcal{J}$ is a random variable that depends only on the edges of the first round, and that $M_\mathcal{J}\leq M'_\mathcal{J}$.
We have
 \begin{equation} \label{eq:M-J-prime-p}
   \begin{split}
     \E[M_\mathcal{J}'] &= \Theta(n^{2\kappa(\mathcal{F}^\pi_-) -
       \kappa(\mathcal{J}_-)}p^{2e(\mathcal{F}^\pi_{-}) - e(\mathcal{J}_-)})\cdot \Theta(n^{2t - \kappa_\mathcal{J}}) \\
     &= \Theta(n^{2\kappa(\mathcal{F}^\pi)}p^{2e(\mathcal{F}^\pi_{-})})n^{-\kappa(\mathcal{J}_-)-\kappa_\mathcal{J}}p^{-
       e(\mathcal{J}_-)} \stackrel{(\ref{eq:num-M})}{=} \Theta(M^2)n^{-\kappa(\mathcal{J})}p^{-e(\mathcal{J}_-)}\\
			&=\Theta(M^2)n^{-\kappa(\mathcal{J})}p^{-e(\mathcal{J})+e(T_\mathcal{J})}.
   \end{split}
 \end{equation}
 As \(\mathcal{F}^\pi\) consists of \(t\) disjoint copies of \(F^\pi\) we can
 apply Lemma~\ref{lem:subsets-of-Fpi-are-not-dense} once for each intersection
 of \(\mathcal{J}\subseteq\mathcal{F}^\pi\) with one of the copies of \(F^\pi\). For every such
 intersection \(J \subseteq F^\pi\), as \(p \gg n^{-\theta'}\), we have by
 Lemma~\ref{lem:subsets-of-Fpi-are-not-dense} that
 \begin{equation*}
   n^{-\kappa(J)}p^{-e(J)} \ll n^{-\kappa(J) + \theta e(J)} \stackrel{\eqref{eq:def-mu-r-theta}}{=} n^{-\mu_{r,\theta}(J)} = O(1).
 \end{equation*}
 As \(t\) is a fixed constant the same holds if we replace \(J\) by
 \(\mathcal{J}\). Together with \eqref{eq:M-J-prime-p} and Markov's inequality
 this implies that a.a.s.
 \begin{equation}\label{eq:M-J-prime-bound}
   M_{\mathcal{J}}' \ll M^2p^{e(T_\mathcal{J})}
 \end{equation}
(i.e., for an approriate function $f(n)=o(1)$ a.a.s.\ we have $M_{\mathcal{J}}'\leq f(n) M^2 p^{e(T_\mathcal{J})}$).
 As
 the number of ways of choosing \(\mathcal{J}\subseteq \mathcal{F}^\pi\) is a
 constant depending only on \(F\), \(r\) and \(\pi\),
 \eqref{eq:M-J-prime-bound} holds a.a.s.\ for every possible choice of
 \(\mathcal{J}\) simultaneously. For the second round we condition on the
 first one satisfying \eqref{eq:num-N} and \eqref{eq:M-J-prime-bound} for all \(\mathcal{J}\subseteq\mathcal{F}^\pi\). With
 this we obtain from \eqref{eq:upper-bound-var-1} that
 \begin{equation*}
   \begin{split}
     \Var[Z] &= \sum_{(i,j)\in I} p^{2rt\deg_F(u_1) - t_{ij}} =
     \sum_{\substack{\mathcal{J}\subseteq \mathcal{F}^\pi:\\ \kappa_\mathcal{J}\geq 1}} M_{\mathcal{J}}
     p^{2rt\deg_F(u_1) - e(T_{\mathcal{J}})} \\
     &\leq \sum_{\substack{\mathcal{J}\subseteq \mathcal{F}^\pi:\\ \kappa_\mathcal{J}\geq 1}} M'_{\mathcal{J}} p^{2rt\deg_F(u_1) -
       e(T_{\mathcal{J}})} \ll (Mp^{rt\deg_F(u_1)})^2 \stackrel{(\ref{eq:upper-bound-exp-z})}{=} \E[Z]^2.
   \end{split}
 \end{equation*}
 By the second moment method this implies that a.a.s.\ $Z= \Theta\bigl(n^{\kappa(\mathcal{F}^\pi)}p^{e(\mathcal{F}^\pi)}\bigr)$, and that thus at least this number of copies of $\mathcal{F}^\pi$ are created in the second round.
\end{proof}

\subsection{Proofs of the technical lemmas}

The proofs in this section are essentially line-by-line translations of the
analogous proofs in Mütze et al.~\cite{Mutze2011} from the edge to the
vertex case.

Together with Theorem~\ref{thm:achlioptas-ubound} the following lemma proves
Theorem~\ref{thm:threshold-equivalence}.

\begin{lemma}
  \label{lem:threshold-equivalence}
  Let \(F\) be a fixed nonempty graph, and let \(r\geq 2\) be a fixed
  integer. Let \(\theta^* = \theta^*(F,r)\) be the unique solution of
  \begin{equation}
    \Lr(F) \stackrel{!}{=} 0,
  \end{equation}
  where \(\Lr(F)\) is defined in \eqref{eq:lambda-r} and
  \eqref{eq:Lambda-r-theta}. Then we have
  \begin{equation*}
    m^{r*}(F) = \frac{1}{\theta^*(F,r)}.
  \end{equation*}
\end{lemma}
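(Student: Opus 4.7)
The strategy is to prove, for each ordering $\pi \in \Pi(V(F))$, the per-$\pi$ identity
\[
\max_{H \subseteq F} \drs(H, \pi|_H) \;=\; \frac{1}{\theta'(\pi)},
\]
where $\theta'(\pi)$ denotes the unique nonnegative root of $\min_{H \subseteq F} \lr(H, \pi|_H) = 0$. Once this is established, taking $\min_\pi$ on both sides yields $m^{r*}(F) = \min_\pi 1/\theta'(\pi) = 1/\max_\pi \theta'(\pi)$. Using that each $\lr(H, \pi|_H)$ is continuous and non-increasing in $\theta$, with $\lr(\emptyset, \cdot) \equiv 0$ and $\lr(H, \pi|_H) > 0$ at $\theta = 0$ for nonempty $H$, one checks that $\Lr(F) = \max_\pi \min_H \lr(H, \pi|_H)$ equals $0$ on the interval $[0, \max_\pi \theta'(\pi)]$ and is strictly negative beyond, so $\theta^* = \max_\pi \theta'(\pi)$, and the lemma follows.

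The per-$\pi$ identity I would further reduce to the per-subgraph identity
\[
\drs(H, \pi|_H) \;=\; \frac{1}{\theta^*(H, \pi|_H)}
\]
for every nonempty ordered subgraph $(H, \pi|_H)$ with $e(H) \geq 1$, where $\theta^*(H, \pi|_H)$ is the unique root of $\lr(H, \pi|_H) = 0$. Subgraphs with no edges satisfy $\lr(H, \pi|_H) \equiv v(H) > 0$ and thus do not affect $\min_H \lr(H, \pi|_H)$, yielding $\theta'(\pi) = \min_{H : e(H) \geq 1} \theta^*(H, \pi|_H)$ and hence the per-$\pi$ identity. I would prove the per-subgraph identity by induction on $v(H)$, unfolding $\lr(H, \pi|_H)$ via iterated applications of Lemma~\ref{lem:lr-drop-minimization}. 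At each application, the inner $\min$ over $J \subseteq H$ containing $u_1$ is replaced by a concrete minimizer, producing a ``main branch'' of weight $1$ on $\lr(H \setminus u_1, \pi \setminus u_1)$ and a ``side branch'' of weight $(r-1)$ on $\lr(J \setminus u_1, \pi|_{J\setminus u_1})$. Accumulating weights along all branches and across each vertex $u_i$, one obtains by induction the identity
\[
\lr(H_1, \pi) \;=\; 1 \;+\; \sum_{i=1}^{h} c_i \bigl((v(H_i) - 1) - \theta\, e(H_i)\bigr),
\]
where $H_2, \ldots, H_h$ are the minimizers extracted from the recursion (automatically satisfying $H_i \subseteq H_1 \setminus \{u_1, \ldots, u_{i-1}\}$ and $u_i \in H_i$) and the coefficients $c_i$ coincide with those defined in~\eqref{eq:def-ci}. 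Setting the right-hand side to $0$ yields $\theta^*(H_1, \pi) = (1 + \sum c_i(v(H_i)-1))/\sum c_i e(H_i)$, whose reciprocal is precisely the ratio appearing in~\eqref{eq:def-drstar}. Since $\lr$ is non-increasing in $\theta$ and each admissible choice of $H_2,\dots,H_h$ produces a linear-in-$\theta$ expression of the above shape, minimizing $\lr$ at a given $\theta$ yields the smallest root $\theta^*$ and therefore the largest reciprocal; the $\lr$-minimizers thus correspond exactly to the maximizers in~\eqref{eq:def-drstar}.

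The main obstacle will be the coefficient bookkeeping in the unfolding. The recursion for $\lr$ bifurcates at every step into a main branch and a side branch, so its naive expansion has exponentially many subprocesses, whereas the formula for $\drs$ is a single sum of length $h = v(H_1)$. The collapse rests on the observation that the $(r-1)$-weighted side branch spawned at step $j$ carries total weight $(r-1) c_j$ on the subgraph $H_j \setminus u_j$ and therefore visits vertex $u_i$ (for $i > j$) exactly when $u_i \in V(H_j)$, contributing an additive $(r-1) c_j \mathbf{1}[u_i \in H_j]$ to $c_i$; summing over $j < i$ recovers precisely~\eqref{eq:def-ci}. Verifying this identification carefully, including that the collection of minimizers extracted from the full recursion tree can be organized into a single sequence $H_2, \ldots, H_h$ (even when the minimizer at some step is a proper subgraph of the current graph and the main/side branches genuinely diverge), and confirming that the local greedy minimization in $\lr$ is compatible with the global maximization in $\drs$, is the bulk of the technical work.
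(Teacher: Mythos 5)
Your high-level strategy --- passing to a per-$\pi$ identity $\max_{H}\drs(H,\pi|_H)=1/\theta'(\pi)$ and then unfolding the $\lr$-recursion to compare against the $\drs$-formula --- is the same one the paper pursues, but your further reduction to the \emph{per-subgraph} identity $\drs(H,\pi|_H)=1/\theta^*(H,\pi|_H)$ is not correct, and this is a genuine gap rather than just unfinished bookkeeping. For a fixed $H_1$ one always has, from the recursion \eqref{eq:lambda-r} and the fact that $J=H_1$ is a candidate in the inner minimum,
\begin{equation*}
\lr(H_1,\pi)\ \le\ 1+r\bigl(\lr(H_1\setminus u_1,\pi\setminus u_1)-\theta\deg_{H_1}(u_1)\bigr),
\end{equation*}
and the right-hand side is exactly $\min_{H_2,\dots,H_h}\mu^*_{r,\theta}(\vH)$ from the paper's chain, so its unique root is $1/\drs(H_1,\pi)$. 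But the inequality is strict whenever the inner minimum over $J\subseteq H_1$ with $u_1\in J$ is attained at some $J\ne H_1$; in that case the root of $\lr(H_1,\pi)$ is strictly smaller than $1/\drs(H_1,\pi)$, and your claimed identity $\lr(H_1,\pi)=1+\sum_i c_i\bigl((v(H_i)-1)-\theta e(H_i)\bigr)$ cannot hold for any admissible sequence $H_2,\dots,H_h$ (every such sequence gives exactly a $\mu^*_{r,\theta}(\vH)$, all of which bound $\lr(H_1,\pi)$ from above). A concrete witness is $H_1=\{u_1\}\sqcup K_3\sqcup\{u_5\}$ with $\pi=(u_1,\dots,u_5)$ and $r=2$: one computes $\theta^*(H_1,\pi)=1$ but $\drs(H_1,\pi)=16/17\ne 1$.

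What saves the argument (and what the paper's proof actually does) is that the discrepancy happens only at $H_1$'s that are \emph{not} global $\lr$-minimizers: Lemma~\ref{lem:lr-drop-minimization} shows that at the optimizing $H_1^*$ the side branch coincides with the main branch, so the inequality above is tight there, giving $\min_{H_1}\lr(H_1,\pi|_{H_1})=\min_{H_1}\bigl(1+r(\lr(H_1\setminus u_1,\cdot)-\theta\deg_{H_1}(u_1))\bigr)$ even though the two quantities are not equal termwise. Your proof as written skips exactly this step: you would need to prove equality only at the minimizer (via Lemma~\ref{lem:lr-drop-minimization}) rather than for every $H_1$, and you would also need to establish the nesting property $u_j\in H_i\Rightarrow H_j\subseteq H_i$ for an optimal sequence (the paper's exchange argument for \eqref{eq:Hi-Hj-subgraph}), which is what lets the branching recursion collapse onto a single sequence $H_2,\dots,H_h$. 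You flag both of these as ``the bulk of the technical work'' without carrying them out, and for the first one the route you describe (per-subgraph identity) is simply unavailable.

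Two smaller remarks: the claim that $\Lr(F)$ vanishes on all of $[0,\max_\pi\theta'(\pi)]$ is inconsistent with the paper's stated uniqueness of the root; the inner minimum in \eqref{eq:Lambda-r-theta} has to range over nonempty $H$ (equivalently, $\lr(\emptyset,\cdot)\equiv 0$ should be excluded), after which $\Lr(F)$ is strictly positive on $[0,\theta^*)$. And your reduction $\theta'(\pi)=\min_H\theta^*(H,\pi|_H)$ is fine once the empty graph is excluded.
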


\begin{proof}
  For any nonempty ordered graph \((F,\pi)\), set
  \begin{multline}\label{eq:def-cvH}
      \cvH(F,\pi) := \bigl\{\vH = \bigl((H_1,\sigma), H_2, \dotsc, H_h) \mid
      H_1 \subseteq F \\\wedge \sigma = \pi|_{H_1} = (u_1, \dotsc, u_h) \\\wedge
      \forall i \geq 2 : (H_i \subseteq H_1\setminus\{u_1, \dotsc, u_{i-1}\}
      \wedge u_i \in H_i)\bigr\}
  \end{multline}
  (cf.\ the maximizations in \eqref{eq:def-drstar} and
  \eqref{eq:def-mrstar}). For all \(\vH \in \cvH(F,\pi)\) we define
  \begin{equation}
    \label{eq:e-v-rstar}
    \begin{split}
      e^{r*}(\vH) &:= \sum_{i = 1}^{h} c_i e(H_i), \\
      v^{r*}(\vH) &:= 1 + \sum_{i = 1}^{h} c_i \bigl(v(H_i)-1\bigr),
    \end{split}
  \end{equation}
  where the coefficients \(c_i = c_i(\vH, r)\) are defined as in
  \eqref{eq:def-ci}. Furthermore, we define
  \begin{equation}
    \label{eq:mu-star}
    \mu_{r,\theta}^*(\vH) := v^{r*}(\vH) - \theta\cdot e^{r*}(\vH).
  \end{equation}
  Note that by the definitions in~\eqref{eq:def-mrstar} and \eqref{eq:def-drstar}, we have
  \begin{equation*}
	\begin{split}
    m^{r*}(F,\pi)&=\min_{\pi\in\Pi(V(F))}\max_{H_1\subseteq F} \drs(H_1, \pi|_{H_1})\\
   &=  \min_{\pi\in\Pi(V(F))}\max_{\vH \in \cvH(F,\pi)}
    \frac{e^{r*}(\vH)}{v^{r*}(\vH)} = \frac{1}{\theta^{**}(F,r)},
		\end{split}
  \end{equation*}
  where \(\theta^{**}(F,r)\) is the unique solution of
  \begin{equation*}
     \max_{\pi\in\Pi(V(F))}\min_{\vH \in \cvH(F,\pi)} \mu_{r,\theta}^*(\vH) \stackrel{!}{=} 0.
  \end{equation*}
  To prove Lemma~\ref{lem:threshold-equivalence}, it suffices to show that the left hand side of the last equation equals $\Lambda_{r,\theta}(F)$ as defined in \eqref{eq:Lambda-r-theta}. 
  We will do so by showing that for any nonempty ordered graph \((F,\pi)\) and any
  \(r\geq 2\) and \(0 \leq \theta \leq 2\) we have
  \begin{equation}\label{eq:min-mu=min-lambda}
    \min_{\vH \in \cvH(F,\pi)} \mu_{r,\theta}^*(\vH) = \min_{H\subseteq F} \lr(H,\pi|_H).
  \end{equation}
  The remainder of the proof is devoted to establishing \eqref{eq:min-mu=min-lambda}. To simplify the notation we consider \(r\) and \(\theta\) fixed and drop all
  corresponding sub- and superscripts. In the following equations we define
  the quantities \(\tilde e\), \(\tilde v\) and \(\tilde \mu\), which depend
  on the choice of an ordered graph \((H_1,\sigma)\). In principle we should
  write \(\tilde e_{(H_1,\sigma)}\), \(\tilde v_{(H_1, \sigma)}\) and \(\tilde
  \mu_{(H_1, \sigma)}\), but we omit this dependency from the notation as well.

  Consider the following recursive definitions for \(1 \leq i \leq h\):
  \begin{equation}
    \label{eq:e-v-tilde}
    \begin{split}
      \tilde{e}(H_i, \dotsc, H_h) &:= e(H_i) + (r-1)\cdot\sum_{j=i+1}^{h}\ind{u_j \in
        H_i}\tilde e(H_j, \dotsc, H_h)\\
      \tilde{v}(H_i, \dotsc, H_h) &:= v(H_i) -1 + (r-1)\cdot\sum_{j=i+1}^{h}\ind{u_j \in
        H_i}\tilde v(H_j, \dotsc, H_h).
    \end{split}
  \end{equation}
  We can now write \(e^{*}(\vH)\) and \(v^{*}(\vH)\) as
  \begin{equation}
    \label{eq:evstar-by-tilde}
    \begin{split}
      e^{*}(\vH) &= r\cdot\tilde e(H_1, \dotsc, H_h) \\
      v^{*}(\vH) &= 1 + r\cdot\tilde v(H_1, \dotsc, H_h).
    \end{split}
  \end{equation}
  This can be verified by induction, using the definition of $c_i$ in~\eqref{eq:def-ci} and noting that for \(1\leq k \leq h\) we
  have
  \begin{equation*}
    \begin{split}
      e^*(\vH) &= \sum_{i = 1}^{k}c_i e(H_i) + (r-1)\cdot
      \sum_{j=k+1}^{h}\Bigl(\sum_{i=1}^{k}c_i \ind{u_j \in H_i}\Bigr)\tilde
      e(H_j, \dotsc, H_h)\\
      v^*(\vH) &= 1 + \sum_{i = 1}^{k}c_i \bigl(v(H_i)-1\bigr) + (r-1)\cdot
      \sum_{j=k+1}^{h}\Bigl(\sum_{i=1}^{k}c_i \ind{u_j \in H_i}\Bigr)\tilde
      v(H_j, \dotsc, H_h),
    \end{split}
  \end{equation*}
  which is equivalent to \eqref{eq:e-v-rstar} for \(k = h\) and to
  \eqref{eq:evstar-by-tilde} for \(k = 1\). Combining \eqref{eq:e-v-tilde} and
  \eqref{eq:evstar-by-tilde} via \eqref{eq:mu-star} also yields that
  \begin{equation}\label{eq:mu-star-mu-tilde-relation}
    \mu^*(\vH) = 1 + r\tilde \mu(H_1, \dotsc, H_h),
  \end{equation}
  where
  \begin{equation}\label{eq:tilde-mu-def}
    \tilde\mu(H_i, \dotsc, H_h) := (v(H_i)-1) - \theta e(H_i) +
    (r-1)\sum_{j=i+1}^{h}\ind{u_j \in H_i} \tilde\mu(H_j, \dotsc, H_h).
  \end{equation}

  It follows that for any fixed subgraph \(H_1\subseteq F\) and \(\sigma :=
  \pi|_{H_1} = (u_1, \dotsc, u_h)\) the following holds: for \(1 \leq i \leq
  h\) and any graph \(H_i\subseteq H_1\setminus\{u_1, \dotsc, u_{i-1}\}\) with
  \(u_i\in H_i\), the value
  \begin{equation}\label{eq:lambda-tilde}
    \tilde\lambda_{(H_1,\sigma)}(H_i, i) := \min_{\substack{H_{i+1}, \dotsc, H_h\\\forall j\geq i+1: H_j \subseteq H_1\setminus
      \{u_1, \dotsc, u_{j-1}\}\wedge u_j \in H_j}} \tilde \mu (H_i, \dotsc, H_h)
  \end{equation}
  can be computed recursively via
  \begin{multline}
    \label{eq:lambda-tilde-recursive}
    \tilde\lambda_{(H_1,\sigma)}(H_i, i) = (v(H_i)-1) - \theta e(H_i) \\+
    (r-1)\cdot\sum_{j=i+1}^{h}\ind{u_j\in H_i}\cdot\min_{H_j \subseteq
      H_1\setminus \{u_1, \dotsc, u_{j-1}\}:\, u_j \in H_j} \tilde
    \lambda_{(H_1,\sigma)} (H_j, j).
  \end{multline}
  In the remainder of the proof we simplify the recursion on the right side to
  relate it to \(\lambda()\) as defined in \eqref{eq:lambda-r}. First we show
  that we can get rid of the dependency on \((H_1, \sigma)\), and that the
  value of \(\tilde\lambda_{(H_1,\sigma)}(H_i, i)\) in fact only depends on
  the isomorphism class of \((H_i, \sigma|_{H_i})\). To this end, we prove
  that for any fixed ordered graph \((H_1, \sigma)\) there exists a sequence
  \(H_2, \dotsc, H_h \subseteq H_1\) as in \eqref{eq:def-cvH} minimizing
  \(\tilde\mu(H_1, \dotsc, H_h)\) with the additional property that
  \begin{equation}
    \label{eq:Hi-Hj-subgraph}
    u_{j} \in H_i \Rightarrow H_j \subseteq H_i.
  \end{equation}
  Let \(H_2, \dotsc, H_h \subseteq H_1\) be graphs minimizing \(\tilde\mu(H_1,
  \dotsc, H_h)\) such that every \(H_i\) is inclusion-maximal, and assume for
  the sake of contradiction that there exist indices \(2 \leq i < j\) with
  \(u_j \in H_i\) but \(H_j \nsubseteq H_i\). Our choice of \(H_2, \dotsc,
  H_h\) implies that for \(H_i' := H_i \cup H_j\) and \(H_j' := H_i \cap H_j\)
  we have
  \begin{equation*}
    \begin{split}
      \tilde\mu(H_i', \dotsc, H_h) - \tilde\mu(H_i, \dotsc, H_h) &> 0,\\
      \tilde\mu(H_j, \dotsc, H_h) - \tilde\mu(H_j', \dotsc, H_h) &\leq 0,
    \end{split}
  \end{equation*}
  where the first inequality is strict due to the inclusion-maximality of
  \(H_i\). Expanding the above equations according to~\eqref{eq:tilde-mu-def}
  yields that both terms are equal to
  \begin{equation*}
    \bigl(v(H_j) - v(H_j')\bigr) - \theta\bigl(e(H_j)-e(H_j')\bigr) + (r-1) \sum_{k=j+1}^{h}\ind{u_k \in H_j\setminus H_i}\tilde\mu(H_k,\dotsc,H_h),
  \end{equation*}
  which is a contradiction. W.l.o.g.\ we may therefore assume
  that \eqref{eq:Hi-Hj-subgraph} holds, and that in
  \eqref{eq:lambda-tilde-recursive} we can minimize over
  subgraphs of \(H_i\setminus\{u_i, \dotsc, u_{j-1}\}\) instead of subgraphs
  of \(H_1 \setminus\{u_1, \dotsc, u_{j-1}\}\).

  Observe that in \eqref{eq:lambda-tilde-recursive} the context
  \((H_1,\sigma)\) is now irrelevant, and that we only require the
  ordering \(\sigma|_{H_i}\) on the right hand side. Setting
  \begin{equation}\label{eq:tilde-lambda-drop-context}
    \tilde\lambda_{(H_1,\sigma)}(H_i,i) =: \tilde\lambda(H_i,\sigma|_{H_i}).
  \end{equation}
  and changing notations accordingly, we obtain from \eqref{eq:lambda-tilde-recursive}
  \begin{multline}\label{eq:lambda-tilde-context-free-sum}
    \tilde\lambda(H, \tau =: (u_1, \dotsc, u_h)) = \bigl(v(H)-1\bigr) - \theta
    e(H) \\+ (r-1)\cdot \sum_{j=2}^{h} \min_{J\subseteq H\setminus\{u_1,\dotsc,
      u_{j-1}\}: u_j \in J}\tilde\lambda(J, \tau|_{J}).
  \end{multline}
  Next we get rid of the sum in the equation above as follows:
  \begin{equation}\label{eq:lambda-tilde-context-free}
    \begin{split}
      \tilde\lambda(H, \tau) &= \bigl(v(H)-1\bigr) - \theta e(H) \\
      &\quad+ (r-1)\min_{J\subseteq H\setminus u_1 : u_2 \in J} \tilde\lambda(J,\tau|_{J})\\
      &\quad+ (r-1)\cdot \sum_{j=3}^{h} \min_{J\subseteq
        H\setminus\{u_1,\dotsc, u_{j-1}\}: u_j \in J}\tilde\lambda(J,
      \tau|_{J}).\\
      &=(v(H\setminus u_1)-1) + 1 - \theta e(H\setminus u_1) - \theta\deg_{H}(u_1)\\
      &\quad+ (r-1)\min_{J\subseteq H\setminus u_1 : u_2 \in J} \tilde\lambda(J,\tau|_{J})\\
      &\quad+ (r-1)\cdot \sum_{j=3}^{h} \min_{J\subseteq
        H\setminus\{u_1,\dotsc, u_{j-1}\}: u_j \in J}\tilde\lambda(J,
      \tau|_{J}).\\
      &= 1 + \tilde\lambda(H\setminus u_1, \tau|_{H\setminus u_1}) - \theta
      \deg_H(u_1) + (r-1)\min_{\substack{J\subseteq H\setminus u_1:\\u_2\in J}}\tilde\lambda(J,\tau|_{J}).
    \end{split}
  \end{equation}
  Substituting
  \begin{equation}\label{eq:tilde-bar-substitutions}
    \tilde\lambda(H,\tau) =: \bar\lambda(H\setminus u_1,
    \tau \setminus u_1)-\theta\deg_{H}(u_1), \quad
  (H\setminus u_1, \tau\setminus u_1) =: (\bar H, \bar\tau), \quad
  u_2 =: \bar u_1
  \end{equation}
  we see that the last line of \eqref{eq:lambda-tilde-context-free} is
  equivalent to
  \begin{multline*}
    \bar\lambda(\bar H,\bar \tau) = 1 + \bar\lambda(\bar H\setminus \bar u_1,
    \bar\tau \setminus \bar u_1) - \theta\deg_{\bar H}(\bar u_1)\\ +
    (r-1)\min_{\substack{J\subseteq \bar H:\\\bar u_1 \in J}}
    \Big(\bar\lambda(J\setminus \bar u_1, \bar\tau|_{J\setminus \bar u_1}) -
    \theta\deg_{J}(\bar u_1)\Big),
  \end{multline*}
  which is the recursive step in the definition of \(\lambda()\) in
  \eqref{eq:lambda-r}. Moreover if \((\bar H,\bar \tau) = (H\setminus u_1,
  \tau\setminus u_1)\) contains no vertices (i.e.\ \(H\) is a graph on 1
  vertex and therefore no edges) we have
  \begin{equation*}
    \bar\lambda(\bar H,\bar \tau) \stackrel{\eqref{eq:tilde-bar-substitutions}}{=} \tilde\lambda(H, \tau) + \theta\deg_H(u_1)
    = \tilde\lambda(H,\tau)
    \stackrel{\eqref{eq:lambda-tilde-context-free-sum}}{=} 0 = \lambda(\bar H,
    \bar \tau).
  \end{equation*}
  This takes care of the base case and implies that \(\bar\lambda(H,\tau) =
  \lambda(H,\tau)\) for all ordered graphs \((H,\tau)\). Thus we have for
  every fixed \((H_1, \sigma)\), \(\sigma = (u_1, \dotsc, u_h)\), that
  \begin{multline}\label{eq:tilde-mu-is-tilde-lambda-is-lambda}
      \min_{\substack{H_2, \dotsc, H_h\\\forall j\geq 2: H_j \subseteq
          H_1\setminus \{u_1, \dotsc, u_{j-1}\}\wedge u_j \in H_j}} \tilde \mu
      (H_1, \dotsc, H_h) \stackrel{\eqref{eq:lambda-tilde}}{=} \tilde\lambda_{(H_1,\sigma)}(H_1, 1) \\\stackrel{\eqref{eq:tilde-lambda-drop-context}}{=}
      \tilde\lambda(H_1, \sigma) \stackrel{\eqref{eq:tilde-bar-substitutions}}{=} \bar\lambda(H_1\setminus u_1,
      \sigma\setminus u_1) - \theta\deg_{H_1}(u_1) \\= \lambda(H_1\setminus u_1,
      \sigma\setminus u_1) - \theta\deg_{H_1}(u_1).
  \end{multline}
  Still using the notation \(\pi|_{H_1} = \sigma = (u_1, \dotsc,  u_h)\)
  (cf.~\eqref{eq:def-cvH}), equation \eqref{eq:min-mu=min-lambda} now follows
  from
  \begin{multline*}
    \min_{\vH \in \cvH(F,\pi)} \mu_{r,\theta}^*(\vH)\\
    \stackrel{\eqref{eq:def-cvH},\eqref{eq:mu-star-mu-tilde-relation}}{=}
    \min_{H_1\subseteq F}\Bigl\{ 1 + r\cdot \min_{\substack{H_{2}, \dotsc,
        H_h\\\forall j\geq 2: H_j \subseteq H_1\setminus
        \{u_1, \dotsc, u_{j-1}\}\wedge u_j \in H_j}} \tilde\mu(H_1, \dotsc, H_h)\Bigr\}\\
    \stackrel{\eqref{eq:tilde-mu-is-tilde-lambda-is-lambda}}{=}
    \min_{H_1\subseteq F}\Bigl\{1 + r\bigl(\lambda(H_1\setminus u_1, \sigma\setminus
    u_1) - \theta\deg_{H_1}(u_1)\bigr)\Bigr\}\\
    \stackrel{\eqref{eq:min-nice-recursion}}{=} \min_{H_1\subseteq F}
    \lr(H_1,\sigma) = \min_{H\subseteq F} \lr(H,\pi|_H),
  \end{multline*}
  where in the last step we applied Lemma~\ref{lem:lr-drop-minimization} to
  the family of all ordered subgraphs of \((F,\pi)\).
\end{proof}

It remains to prove Lemma~\ref{lem:subsets-of-Fpi-are-not-dense}.

\begin{proof}[Proof of Lemma~\ref{lem:subsets-of-Fpi-are-not-dense}]
  \newcommand{\Fpirec}[1]{(F_{#1-})^{\pi_{#1-}}}
  \newcommand{\hFpirec}[1]{(\hat{F}_{#1-})^{\pi_{#1-}}}
  For this proof we require an extension of the definition of connectedness to
  \(r\)-matched graphs. We call an \(r\)-matched graph \(H = (V, E,
  \mathcal{K})\) \emph{connected} if for any 2 vertices \(u, v \in V\) which
  are not part of the same \(r\)-set, there exists a sequence of \(r\)-sets
  \(K_1, \dotsc, K_t \in \mathcal{K}\) such that \(u \in K_1\), \(v \in K_t\)
  and there exists an edge between at least one vertex in \(K_i\) and one in
  \(K_{i+1}\) for all \(1 \leq i \leq t-1\). Since the value of
  \(\mu_{r,\theta}()\) for a disconnected \(r\)-matched graph \(H\) is simply
  the sum of the values of \(\mu_{r,\theta}()\) for all connected components
  of \(H\), it suffices to prove the claim for all connected \(r\)-matched
  subgraphs \(J \subseteq F_r^\pi\), i.e.\ to prove that for any integer \(r \geq
  2\) and any \(0 \leq \theta \leq 2\) we have
  \begin{equation}
    \label{eq:min-connected}
    \min_{\substack{J \subseteq \Fpi:\\\text{\(J\) connected}}} \mr(J) =
    \min_{\substack{H\subseteq F}} \lambda_{r,\theta}(H, \pi|_{H}).
  \end{equation}
  For the remainder of the proof we consider \(r\) and \(\theta\) fixed and
  drop the corresponding subscripts from the notation.

  Let \(\pi = (u_1, \dotsc, u_f)\), and define \(F_{i-} := F \setminus \{u_1,
  \dotsc, u_i\}\) and \(\pi_{i-} := \pi|_{F_{i-}}\). For any grey-black
  \(r\)-matched graph \(\Fpirec{i}\) we call the \(r\)-set containing the
  vertex \(u_{i+1}\) of its central copy of \(F_{i-}\) the \emph{central
    \(r\)-set}.

  Let \(J\) be a connected subgraph of \(F^{\pi}\), and let \(0 \leq i \leq
  f-1\) be the largest index such that \(J\) is also contained in a copy of
  \(\Fpirec{i}\). By the maximal choice of \(i\) and the
  connectedness of \(J\), the graph \(J\) contains the central \(r\)-set of
  this copy. With this we can reformulate equation \eqref{eq:min-connected} to
  \begin{equation}
    \label{eq:min-mu-equals-min-lambda}
    \min_{0\leq i\leq f-1} \min_{\substack{J \subseteq
        \Fpirec{i}:\\\text{\(K(u_{i+1})\in J \wedge J\)
          connected}}} \mu(J) =
    \min_{0\leq i\leq f-1} \min_{\substack{H\subseteq F_{i-}:\\u_{i+1}\in H}} \lambda(H, \pi|_{H}).
  \end{equation}
  where we use \(K(u_{i+1}) \in J\) as a shorthand notation to indicate that
  \(J\) contains the central \(r\)-set \(K(u_{i+1})\) of
  \(\Fpirec{i}\). We now show that the inner minimizations of
  \eqref{eq:min-mu-equals-min-lambda} are equivalent. By changing variables
  (\(F \leftarrow F_{i-}\) and \(\pi \leftarrow \pi_i\)) this reduces to
  showing that for any ordered graph \((F,\pi)\) we have
  \begin{equation}\label{eq:lem-J-subgraphs-reduced}
    \min_{\substack{J \subseteq F^\pi : K(u_1) \in J\\ \text{\(J\) connected}}} \mu(J) =
    \min_{\substack{H\subseteq F :\\u_{1}\in H}} \lambda(H, \pi|_{H}).
  \end{equation}

  For any \(r\)-matched graph \(H\) we refer to a subgraph \(J \subseteq H\)
  that minimizes \(\mu(J)\) as a \emph{rarest subgraph} of \(H\). To determine
  a rarest subgraph of \(F^\pi\) we can make use of its recursive structure.

  Let $1\leq i\leq f-1$, and consider a fixed copy \(\hFpirec{i}\) of \(\Fpirec{i}\) in
  \(F^\pi\). By \(\hat F_{i-}\) we denote the central copy of \((F_{i-},
  \pi_{i-})\) in \(\hFpirec{i}\), and by \(\hat u_i\) the vertex that completes
  \(\hat F_{i-}\) to a copy of \((F_{(i-1)-}, \pi_{(i-1)-})\). Moreover, let \(\hFpirec{(i-1)}\) denote   the copy of \(\Fpirec{(i-1)}\) that is formed by  \(\hFpirec{i}\), \(K(\hat u_i)\) and $r-1$ other copies of \(\Fpirec{i}\).

  Note that the \(r\) copies of \(\Fpirec{i}\)
  joined at the central \(r\)-set \(K(\hat u_i) = (u_{i,1},\dotsc, u_{i,r})\)
  of \(\hFpirec{(i-1)}\) are essentially independent: For each \(u_{i,\ell}\),
  \(1 \leq \ell \leq r\), we consider the graph obtained by removing from
  \(\hFpirec{(i-1)}\) the $r-1$ copies of \(\Fpirec{i}\) that are \emph{not} associated with \(u_{i,\ell}\), i.e., whose central copy of
  \((F_{i-}, \pi_{i-})\) is \emph{not} connected by $\deg_{F_{(i-1)-}}(u_i)$ edges to $u_{i,\ell}$. (If $\deg_{F_{(i-1)-}}(u_i)=0$ we associate the copies of \(\Fpirec{i}\) with the vertices of $K(\hat u_i)$ arbitrarily.) We call this graph the \emph{branch} of \(\hFpirec{(i-1)}\) corresponding to \(u_{i,\ell}\).
  Note that this is still an \(r\)-matched graph and that all
  \(r\) branches contain the central \(r\)-set \(K(\hat u_i)\). By
  the linearity of \(\mu(H)\) in \(e(H)\) and \(\kappa(H)\), a rarest
  connected subgraph of \(\hFpirec{(i-1)}\) containing \(K(\hat u_i)\) can
  be found by determining a rarest connected subgraph containing \(K(\hat
  u_i)\) in each branch of \(\hFpirec{(i-1)}\) independently. Let \(\hat
  J_i\) denote an arbitrary fixed such rarest subgraph.
  Note that in particular we can compute \(\mu(J)\) as on the left hand side of \eqref{eq:lem-J-subgraphs-reduced} as
  \begin{equation}\label{eq:mu-J-J-1-recursion}
    \mu(J) = 1 + r(\mu(\hat J_1) - 1).
  \end{equation}
  Similarly, we can find a rarest connected subgraph \(J_i\) containing \(K(\hat u_i)\)
  for a branch of \(\hFpirec{(i-1)}\) by determining an optimal choice for
  \(H_i := J_i \cap \hat F_{(i-1)-}\). For any choice of $H_i$, by recursion, for each vertex $u'_j$ of $H_i$, \(i+1 \leq j \leq f\), we already
  know a rarest subgraph containing \(K(u_j')\) for all the \(r-1\)
  remaining branches of the copy of \(\Fpirec{(j-1)}\) corresponding to the
    other \(r-1\) vertices of \(K(u_j')\). Letting \(\hat J_j\), \(i+1\leq j
    \leq f\) denote such rarest subgraphs, we obtain that the value of $\mu(J_i)$ resulting from a given choice of \(H_i\subseteq\hat F_{(i-1)-}\) is
  \begin{equation}\label{eq:mu-J-recursion}
    \mu(J_i) = v(H_i) - \theta e(H_i) + \sum_{j = i+1}^{f}\ind{u_i \in H_j} (r-1)(\mu(\hat J_j)-1).
  \end{equation}
Here we used that for \(i+1 \leq j \leq
    f\) all \(r-1\) many copies of \(\hat J_j\) share one \(r\)-set, and that each such
    \(r\)-set also contains one vertex of \(v(H_i)\).

  Substituting \(\mu(J_i)-1 =: \tilde\lambda_{(F,\pi)}(H_i,i)\) in the above
  equation yields for \(1 \leq i \leq f\) the recursion
  \begin{multline*}
    \tilde\lambda_{(F,\pi)}(H_i,i) = (v(H_i)-1) - \theta e(H_i) \\+ (r-1)\sum_{j =
      i+1}^{f}\ind{u_j \in H_i}\min_{\substack{H_j \subseteq
        H_1\setminus\{u_1, \dotsc, u_{j-1}\}:\\u_j \in H_j}}
    \tilde\lambda_{(F,\pi)}(H_j,j).
  \end{multline*}
  This is essentially the same recursion as \eqref{eq:lambda-tilde-recursive}
  in the proof of Lemma~\ref{lem:threshold-equivalence}.
  Analogously to the proof of Lemma~\ref{lem:threshold-equivalence} one can show that
  \begin{equation}\label{eq:tilde-lambda-to-lambda}
    \tilde\lambda_{(F,\pi)}(H_1, 1) = \lambda(H_1\setminus u_1, \sigma\setminus u_1) - \theta\deg_{H_1}(u_1),
  \end{equation}
  where \(\sigma := \pi|_{H_1}\)
  (cf.~\eqref{eq:tilde-mu-is-tilde-lambda-is-lambda}). Finally
  \begin{equation*}
    \begin{split}
      \min_{\substack{J\subseteq F^\pi: K(u_1) \in J\\\wedge J \text{
            connected}}} \mu(J) &\stackrel{\eqref{eq:mu-J-J-1-recursion}}{=}
      r\bigl(\mu(\hat J_1)-1\bigr) + 1 = \min_{H_1\subseteq F: u_1 \in
        H} 1+r\cdot \tilde\lambda(H_1, 1)
      \\&\stackrel{\eqref{eq:tilde-lambda-to-lambda}}{=} \min_{H_1\subseteq F:
        u_1 \in H_1} 1+r\cdot \bigl(\lambda(H_1\setminus u_1, \sigma\setminus
      u_1) - \theta\deg_{H_1}(u_1)\bigr)
      \\&\stackrel{~\eqref{eq:min-nice-recursion}}{=} \min_{H_1\subseteq F:
        u_1 \in H_1} \lambda(H_1, \sigma) = \min_{H\subseteq F: u_1 \in H}
      \lambda(H, \pi|_{H}).
    \end{split}
  \end{equation*}
  In the last line we applied Lemma~\ref{lem:lr-drop-minimization} to the
  family of all ordered subgraphs of \((F, \pi)\) that contain the vertex
  \(u_1\). This shows \eqref{eq:lem-J-subgraphs-reduced} and finishes the
  proof.
\end{proof}

\end{document}